\theoremstyle{plain}
\makeatletter\@namedef{subjclassname@2020}{\textup{2020} Mathematics Subject Classification}
\newtheorem{Thm}{Theorem}[section]
\newtheorem{Lem}[Thm]{Lemma}
\newtheorem{Cor}[Thm]{Corollary}
\newtheorem{Pro}[Thm]{Proposition}
\theoremstyle{definition}
\newtheorem{Def}[Thm]{Definition}
\theoremstyle{remark}
\newtheorem{Rem}[Thm]{Remark}
\numberwithin{equation}{section}
\newcommand{\ITEE}[4][]{\ifthenelse{\equal{#2}{#3}}{#4}{#1}}
\newenvironment{cor}[2][]{\ITEE[{\begin{Cor}[#1]}]{#1}{}{\begin{Cor}}\label{cor:#2}}{\end{Cor}}
\newenvironment{dfn}[2][]{\ITEE[{\begin{Def}[#1]}]{#1}{}{\begin{Def}}\label{def:#2}}{\end{Def}}
\newenvironment{lem}[2][]{\ITEE[{\begin{Lem}[#1]}]{#1}{}{\begin{Lem}}\label{lem:#2}}{\end{Lem}}
\newenvironment{pro}[2][]{\ITEE[{\begin{Pro}[#1]}]{#1}{}{\begin{Pro}}\label{pro:#2}}{\end{Pro}}
\newenvironment{rem}[2][]{\ITEE[{\begin{Rem}[#1]}]{#1}{}{\begin{Rem}}\label{rem:#2}}{\end{Rem}}
\newenvironment{thm}[2][]{\ITEE[{\begin{Thm}[#1]}]{#1}{}{\begin{Thm}}\label{thm:#2}}{\end{Thm}}
\newenvironment{property}[1]{\begin{equation}%
\label{eq:#1}\begin{minipage}[c]{.85\textwidth}\itshape}{\end{minipage}\end{equation}}
\newcommand{\COR}[2][!]{\ITEE{#1}{!}{Corollary~}\ITEE{#1}{s}{Corollaries~}\textup{\ref{cor:#2}}}
\newcommand{\LEM}[2][!]{\ITEE{#1}{!}{Lemma~}\ITEE{#1}{s}{Lemmas~}\textup{\ref{lem:#2}}}
\newcommand{\THM}[2][!]{\ITEE{#1}{!}{Theorem~}\ITEE{#1}{s}{Theorems~}\textup{\ref{thm:#2}}}
\newcommand{\DEFN}[1]{Definition~\textup{\ref{def:#1}}}
\newcommand{\refoneq}[2][=]{\overset{\eqref{#2}}{#1}}\newcommand{\subspace}{\leqslant}
\newcommand{\spacedeq}[2][=]{\hspace{#2}#1\hspace{#2}}
\newcommand{\R}{\mathbb{R}}\newcommand{\C}{\mathbb{C}}
\newcommand{\Tree}{\mathcal{T}}\newcommand{\Troot}{\mathsf{root}}
\newcommand{\parf}{\mathsf{p}}\newcommand{\N}{\mathbb{N}}\newcommand{\Borel}{\mathfrak{B}}
\newcommand{\ZP}{{\mathbb{Z}_+}}\newcommand{\BO}[1]{\boldsymbol{B}(#1)}
\newcommand{\hip}[1]{\mathtt{h}_{#1}}\newcommand*{\ud}{\mathrm{\,d}}
\newcommand{\rootsum}{\mathop{ \makebox[1em]{ \makebox[0pt]{\ \ \,\scalebox{2}{$.$}}$\bigsqcup$ } }}
\newcommand{\blambda}{{\boldsymbol\lambda}}\newcommand{\tlambda}{ {\boldsymbol{\tilde\lambda}} }
\newcommand{\Chif}[1][]{\mathsf{Chi}\ifthenelse{\equal{#1}{}}{}{^{\langle #1\rangle}\!}}
\newcommand{\Hilb}{\mathcal{H}}\newcommand{\Des}{\mathsf{Des}}\newcommand{\Deso}{\Des^\circ}
\newcommand{\dom}{\mathscr{D}}
\newcommand{\scp}[1]{\left\langle#1\right\rangle}
\newcommand{\pImp}[2]{\par(\ifthenelse{\equal{#2}{#1}}{$\Rightarrow$}{#1)$\Rightarrow$(#2})}
\newcommand{\lImp}[2]{\par(\ifthenelse{\equal{#2}{#1}}{$\Leftarrow$}{#1)$\Leftarrow$(#2})}
\newcommand{\eld}[1]{\ell^2(#1)}\newcommand{\rest}[1]{{\raise-.3ex\hbox{\big|}}_{#1}}
\newcommand{\lin}{\operatorname{lin}}\newcommand{\lincl}{\overline{\operatorname{lin}}}
\newcommand{\sqlow}[1]{_{\langle#1\rangle}}
\newcommand{\NWSR}{the following conditions are equivalent:}
\newenvironment{abece}{\begin{enumerate}[\em\ (a)]}{\end{enumerate}}
\begin{document}

\title{Backward extensions of weighted shifts on directed trees} 

\author[P. Pikul]{Piotr Pikul}
\address{P. Pikul\\Instytut Matematyki\\
 Wydzia\l{} Matematyki i~Informatyki\\Uniwersytet Jagiello\'{n}ski\\
 ul.\ \L{}ojasiewicza 6\\30-348 Krak\'{o}w\\Poland}
\email{piotr.pikul@im.uj.edu.pl}

\begin{abstract}
The weighted shifts are long known and form an important class of operators.
One of generalisations of this class are weighted shifts on directed trees,
where the linear order of coordinates in $\ell^2$ is replaced by a more involved graph structure.
In this paper we focus on the question of joint backward extending of a given family
of weighted shifts on directed trees to a weighted shift on an enveloping
directed tree that preserves subnormality or power hyponormality of considered operators.
One of the main results shows that the existence of such a ``joint backward extension''
for a family of weighted shifts on directed trees depends only on the possibility of
backward extending of single weighted shifts that are components of the family.
We introduce a generalised framework of weighted shifts on directed forests
(disjoint families of directed trees) which seems to be more convenient to work with.
A characterisation of leafless directed forests
on which all hyponormal weighted shifts are power hyponormal is given.
\end{abstract}
\subjclass[2020]{Primary 47B37, 47B20; Secondary 47A20, 47A05, 47B02.}
\keywords{Directed forest, directed tree, weighted shift, backward extension, subnormal operator, power hyponormal operator, Stielties moment sequence}
\maketitle
\tableofcontents

\section{Introduction}
The weighted shifts on $\ell^2$ are long known objects in the operator theory.
A survey on them can be found e.g.\ in \cite{shields}.
We recall that for a sequence $\{ a_n \}_{n=0}^\infty$ of complex weights\footnote{According to
\cite[Corollary 1]{shields} (cf.\ \THM{lambdaplus}), there is no loss of generality in assuming
that the weights are non-negative.}
the associated weighted shift operator $W$ is determined by the formula $We_n = a_n e_{n+1}$, where
$\{e_n\}_{n=0}^\infty$ is the canonical orthonormal basis of $\ell^2$.

The study of backward extensions (sometimes called \emph{back-step} extensions)
was originally about whether the sequence $\{ a_n \}_{n=0}^\infty$ of weights can be extended
by a prefix $\{a_n\}_{n=-k}^{-1}$ so that the weighted shift with weights $\{ a_{n-k} \}_{n=0}^\infty$
has specified property (e.g.\ if it is subnormal). For the results on this topic
we refer the reader to e.g. \cite{curtoq,back-ext1,back-ext2,back-ext3}.

In \cite{szifty} there was introduced a generalisation of the weighted shift concept,
namely weighted shifts on directed trees. To state the idea briefly, dealing with
trees we allow an element of orthonormal basis to have more than one successor (child).
Formula defining action of such operator has a form (cf.\ \eqref{eq:naBazowym})
\[ S_\blambda e_v = \sum_{u\in\Chif(v)} \lambda_u e_u,\]
where $\Chif(v)$ is the set of children of the vertex $v$.

Weighted shifts on directed trees are not the only known generalisation of the
classical shifts. For example in \cite{curto1k} and \cite{back-ext-2D} the problem
of backward extensions is studied for \emph{multivariable} weighted shifts.

In the class of weighted shifts on directed trees we have great freedom
in defining a backward extension. Especially we can ask whether a family of weighted 
shifts on rooted directed trees has a joint backward extension with certain property.
The simplest case of such a joint extension involves only one new vertex -- a parent
of all the former roots (see \emph{rooted sum}, \DEFN{rooted-sum}) --
and weights corresponding to new edges.
It turns out that, in the case when the required property is
either power hyponormality or subnormality, the existence of such a joint extension
does not depend on interrelations between members of the family.
The equivalent condition 
is that each of the single weighted shifts can be
extended by adding a parent vertex to the root ($1$-step backward extension).
Any uniformly bounded family of weighted shifts, each of which admitting $1$-step backward extension,
can be jointly extended by adding new root (see \THM{rootsum-powerhypo} and \LEM{rootsum-ext}).

In the case of subnormality we have proven even stronger result, where we consider
more complex additional structure of the ``extended tree''.
Any at most countable uniformly bounded family
of weighted shifts on rooted directed trees admitting subnormal
$k$-step backward extensions can be extended to a subnormal weighted shift on a single
rooted directed tree regardless the first $k$ ``levels'' of the tree (see \THM{join-sub}).

While main results of this paper focus on directed trees, it is convenient to introduce
a wider class of operators, passing to not necessarily connected graphs, i.e.\ directed forests.
The advantage of weighted shifts on directed forests is that such class
of operators is closed on taking powers or orthogonal sums.

In Section 2 we introduce directed forests and discuss their basic properties.
We also investigate some operations like taking power of a directed forest
or a backward extension of a rooted directed tree.

Section 3 introduces the weighted shift operators on directed forests.
We show that, up to unitary equivalence and slight modification of underlying forests,
all weights\footnote{Except for the weights attached to the roots, which equal $0$
by definition.} are strictly positive.
Many basic properties and arguments are similar to the case of directed trees presented 
in \cite{szifty}.

In Section 4 we take a closer look at the power hyponormality of bounded
weighted shifts. We provide the equivalent conditions for a proper
(see \DEFN{shift-op}) bounded weighted shift to be power hyponormal (see \THM{k-hypo}).

For a classical weighted shifts there is no difference
between hyponormality and power hyponormality.
It is well known that a wide class of composition operators on $L^2$ spaces shares
the same property (see \cite[Corollary 3]{campbell}). However, as shown in
\cite[Example]{campbell} there exist hyponormal composition operators whose
squares are not hyponormal (see \cite{halmoshPB,ito} for more information
on squares of hyponormal operators).
One of the main results of this paper is a characterisation of all leafless
directed forests on which hyponormal weighted shifts are power hyponormal
(see \THM{only-stars}). According to this characterisation, any non-forkless directed
tree admits a hyponormal weighted shift whose square is not hyponormal (cf.\ \cite[Examples 5.3.2 and 5.3.3]{szifty}).

Power hyponormality is related to the Halmos problem whether polynomially
hyponormal operators are subnormal. In 1989, by establishing a~one-to-one
correspondence between equivalence classes
of contractive $k$-hyponormal weighted shifts and linear functionals on $\C[z]$
which are positive on specified cones, McCullough and Paulsen proved that
search for the existence of a polynomially hyponormal operator which is not subnormal 
can be restricted to the class of weighted shifts \cite[Theorem~3.4]{coul-paulsen}.
Using the separation technique on convex cones Curto and Putinar
proved in \cite{polynonsub} (see also \cite{curto-putinar}) the existence
of a non-subnormal polynomially hyponormal operator.
However, an explicit example has not been constructed yet. 
In the light of the above discussion, the relationship between power hyponormality
studied in this paper and the Halmos problem seems to be of some interest.

At the end of Section~4 we prove the aforementioned theorem about
the existence of a joint power hyponormal extension for a family of weighted shifts
on directed trees.

The last section deals with subnormality of weighted shifts on directed trees.
We use Lambert's characterisation of subnormality via Stielties moment sequences
to prove the equivalent conditions for a family of weighted shifts on rooted directed trees
to admit a joint subnormal extension. While we can extend the given collection of trees
backwards in $k$ steps in various different ways, the choice of the resulting tree
is irrelevant for the question of the existence of a subnormal weighted shift extending
the given weighted shifts (see \THM{join-sub}).


\subsection{Notation}
We denote non-negative integers by $\ZP$ and reserve the symbol $\N$ for the positive ones.
Sets of real and complex numbers are denoted by $\R$ and $\C$ respectively.
We will use the disjoint union symbol ``$\sqcup$'' to emphasise
the disjointness of considered sets.
Given a topological space $X$, by $\Borel(X)$ we mean the induced $\sigma$-algebra of
Borel subsets of $X$. For $t\in X$ we denote the Borel probability measure on $X$
concentrated on $\{t\}$ by $\delta_t$ (Dirac measure).
We follow the conventions that $0^0=1$ and $\sum_{v\in\emptyset} x_v=0$.

All considered Hilbert spaces will be over the complex field. For a Hilbert space $\Hilb$
we denote the algebra of all bounded linear operators on $\Hilb$ by $\BO{\Hilb}$.
Notation $X\subspace \Hilb$ means that $X$ is a linear subspace of $\Hilb$.
The linear subspace generated (spanned) by the set $F\subseteq \Hilb$ will be denoted by $\lin(F)$.
For the closure of that subspace we will write $\lincl(F)$.

We recall that an operator $T\in \BO{\Hilb}$ is said to be \emph{subnormal}
if it is a restriction of some normal operator (on a possibly larger Hilbert space)
to its invariant subspace $\Hilb$.
It is called \emph{hyponormal} if $[T^*, T]:=T^*T-TT^*\geq 0$.
Notions of subnormality and hyponormality originate from the work of Halmos \cite{halmosh}.
We refer the reader to \cite{Con} for the fundamentals of the theory of subnormal and
hyponormal operators.

We will also consider whether $T$ is \emph{power hyponormal}, i.e.\ whether $T^n$
is hyponormal for every $n\geq 1$. It is well known that every subnormal operator
is power hyponormal but the converse in not true.
The early counterexample is due to Stampfli \cite{stamp}.

\section{Directed forests and trees}
Operators which are of our interest are defined over specific directed graphs which we introduce below.
We are not presenting their formal representation in a traditional language of graph theory
to make the notation more convenient for our purpose. This section is devoted to basic facts about
directed forests and trees and is purely graph-theoretical, despite possibly non-standard definitions.

\begin{Def} Pair $\Tree=(V,\parf)$ is called a~\emph{directed forest} if the
following conditions are satisfied:\begin{itemize}
\item $V$ is a~nonempty set and $\parf:V\to V$ is a function,
\item if $n\geq 1$, $v\in V$ and $\parf^n(v)=v$ then $\parf(v)=v$.
\end{itemize}
Elements of the set $\Troot(\Tree):=\{u\in V: \parf(u)=u\}$ are called \emph{roots}
of the forest $\Tree$. We call elements of the set $V$ \emph{vertices} of the forest
and $\parf$ is called the \emph{parent function}.
\end{Def}

\begin{Rem}
In fact $(V,\parf)$ is a directed graph in a formal sense
(i.e.\ a set together with a binary relation on it),
but we prefer to regard an edge $(u,v)\in\parf$ as leading from $v=\parf(u)$ towards $u$
and take advantage of $\parf$ being a function.
\end{Rem}

\begin{Rem}
Apart from considering not necessarily connected forests, the main difference between the definition
given in this paper and that introduced in \cite{szifty} is that the parent function
is defined for the roots as well. This is a formal difference. We are not going to
count a root as one of its own children (see below).
\end{Rem}

Later in this section we assume that $\Tree=(V,\parf)$ is a directed forest.

If  $u,v\in V$ are vertices such that $\parf(v)=u\neq v$ then we call $u$ the \emph{parent of $v$}.
We also denote $V^\circ:=V\setminus \Troot(\Tree)$.

We call the elements of the set $\{\parf^k(v):k\in\N\}$ \emph{ancestors} of the vertex $v\in V$.

For $v\in V$ we call elements of $\Chif(v):=\parf^{-1}(v)\setminus \{v\}$ \emph{children of $v$}.
In general we define \emph{$k$-th children of $v$} as
\begin{equation}\label{eq:child-sets}
\Chif[k](v):=\{u\in V: \parf^k(u)=v\neq \parf^{k-1}(u)\},\quad k\geq 1,
\end{equation}
and $\Chif[0](v):=\{v\}$.
We will also denote the set of all \emph{descendants} of the vertex $v$
by $\Des(v):=\bigcup_{n=0}^\infty \Chif[n](v)\subseteq V$ and use the symbol $\Deso(v)$ for
$\Des(v)\setminus\{v\}$.
Cardinality of the set $\Chif(v)$ will be called \emph{the degree} of the vertex $v$
and denoted by $\deg(v)$.

If there is a risk of ambiguity we write $\Chif_\Tree(v)$, $\Des_\Tree(v)$, etc.\ 
to make the dependence on $\Tree$ explicit.

An element of $V\setminus \parf(V)$ is called a \emph{leaf}\footnote{Note that if a root has no children
(it is not considered as a child of itself) it still cannot be called a `leaf'.}. If there is
no leaf (i.e.\ $\parf(V)=V$) then the forest is called \emph{leafless}.
The forest is called \emph{degenerate} if $V^\circ = \emptyset$ (i.e.\ $\parf=\mathrm{id}_V$).
Otherwise it is non-degenerate.

In the following lemma we list some simple observations.
We omit the elementary proofs of them.
\begin{lem}{tree-basics} Let $\Tree=(V,\parf)$ be a directed forest. Then
\begin{abece}
\item\label{tbs:root}$\Troot(\Tree)=\{v\in V: \exists_{n\in\N}\, \parf^n(v)=v\}
= \{v\in V: \parf^k(v)=v\}$, where $k\in\N$ is arbitrarily fixed,
\item\label{tbs:child-sep} $\Chif[k](v)\cap\Chif[k](w)=\emptyset$ for $v,w\in V$, $v\neq w$ and $k\geq 0$,
\item\label{eq:dzieci} $\displaystyle\Chif[k](v)=\bigsqcup_{u\in \Chif(v)} \Chif[k-1](u)
	= \bigsqcup_{u\in \Chif[k-1](v)} \Chif(u)$ for $k\in \N$,
\item\label{tbs:notroots} $\displaystyle V^\circ = \bigcup_{v\in V} \Chif(v) = \bigcup_{v\in V} \Deso(v)$,
\item\label{tbs:Des-Des} if $v\in V$ and $w\in\Des(v)$ then $\Des(w)\subseteq\Des(v)$,\vspace{.5ex}
\item\label{tbs:leafs} $v\in V$ is a leaf if and only if $v\in V^\circ$ and $\deg(v)=0$.
\end{abece}
\end{lem}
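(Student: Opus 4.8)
The plan is to reduce all six items to two elementary facts about the parent function: the defining ``no short cycles'' axiom (if $\parf^n(v)=v$ for some $n\geq1$ then $\parf(v)=v$) and the uniqueness of the iterate $\parf^k(u)$. First I would record the convenient reformulations that $u\in\Chif[k](v)$ (for $k\geq1$) means exactly $\parf^k(u)=v\neq\parf^{k-1}(u)$, and that, by passing to the least such exponent, one obtains the characterisation $w\in\Des(v)\iff\parf^m(w)=v$ for some $m\in\ZP$. This last equivalence, proved by choosing the smallest $m$ with $\parf^m(w)=v$ so that automatically $\parf^{m-1}(w)\neq v$, is the workhorse for part~(e) and streamlines the rest.

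For part~(a) I would argue $\Troot(\Tree)\subseteq\{v\in V:\exists n\in\N,\,\parf^n(v)=v\}\subseteq\Troot(\Tree)$, the first inclusion taking $n=1$ and the second being precisely the forest axiom; since $\parf(v)=v$ forces $\parf^k(v)=v$ for every $k$ by an immediate induction, the set $\{v:\parf^k(v)=v\}$ is squeezed between the two and coincides with them for any fixed $k$. Part~(b) is immediate: a vertex $u\in\Chif[k](v)\cap\Chif[k](w)$ would give $v=\parf^k(u)=w$ (and for $k=0$ the sets are the singletons $\{v\},\{w\}$). For part~(d) I would note $\bigcup_v\Chif(v)=\{u:\parf(u)\neq u\}=V^\circ$ directly, observe $\Chif(v)\subseteq\Deso(v)$, and close the sandwich by showing every $u\in\Deso(v)$ lies in some $\Chif[n](v)$ with $n\geq1$, whence $\parf^{n-1}(u)\neq\parf^n(u)$ prevents $u$ from being a root. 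Part~(e) then follows by composing iterates: if $\parf^m(w)=v$ and $\parf^j(z)=w$ then $\parf^{m+j}(z)=v$. Part~(f) is pure unfolding of the definitions of leaf ($v\notin\parf(V)$), of $V^\circ$ ($\parf(v)\neq v$) and of $\deg(v)=0$ ($\Chif(v)=\emptyset$), splitting the backward direction into the cases $u=v$ and $u\neq v$ for any putative $u$ with $\parf(u)=v$.

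The main work, and the step I expect to be the genuine obstacle, is part~(c), where both equalities are proved by double inclusion with disjointness inherited from~(b). For $\Chif[k](v)=\bigsqcup_{u\in\Chif(v)}\Chif[k-1](u)$ I would, given $w\in\Chif[k](v)$, take the intermediate vertex $u:=\parf^{k-1}(w)$ and verify both that $u\in\Chif(v)$ and that $w\in\Chif[k-1](u)$; for the second equality I would instead take $u:=\parf(w)$. The delicate point is that the strict inequalities built into the definition of $\Chif[k]$ must be transported correctly: each ``$\neq$'' has to be derived from the hypothesis, typically by noting that an unwanted equality $\parf^{i}(w)=\parf^{i+1}(w)$ would, after applying $\parf$ repeatedly, collapse $\parf^{k-1}(w)$ onto $v$ and contradict $w\in\Chif[k](v)$. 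I would also treat the boundary case $k=1$ separately, where $\Chif[0]$ degenerates to singletons and both decompositions reduce to $\Chif(v)=\Chif(v)$, so that the general argument (which implicitly uses $k\geq2$ when it writes $\parf^{k-2}$) is never applied out of range.
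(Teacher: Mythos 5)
Your proof is correct. The paper itself gives no argument here --- it explicitly states ``We omit the elementary proofs of them'' --- so there is nothing to compare against; your write-up simply supplies the omitted details, and it does so accurately, including the two points that genuinely require care: the derivation of the strict inequalities $\parf^{k-1}(w)\neq v$ (resp.\ $w\neq\parf(w)$) in part~(c) from the forest axiom, and the separate treatment of the boundary case $k=1$ where $\Chif[0]$ degenerates to a singleton.
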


A \textbf{tree} in the forest $\Tree$ is an equivalence class in $V$ with
respect to the relation
\[v\sim w \iff \exists_{m,n\in\ZP}\ \parf^n(v)=\parf^m(w).\]
In other words, a tree is a connected component of the graph represented by $\Tree$.
In the set-theoretic sense, the relation ``$\sim$''is the smallest equivalence relation containing $\parf$.

For a vertex $v\in V$ in a directed forest we will denote the tree to which $v$ belongs by $[v]$.
If $[v]=\{v\}$ then we call such tree \emph{degenerate}.
Every tree in a degenerate forest is degenerate.

\begin{lem}{tree} Let $\Tree=(V,\parf)$ be a directed forest. Then 
\begin{abece}
\item\label{tbs:tree} if $v\in V$, then for any $k_0\in \ZP$
\begin{equation}\label{eq:tree}
[v]=\bigcup_{k=k_0}^\infty \Des\bigl(\parf^k(v)\bigr);
\end{equation}
in particular $\Des(v)\subseteq [v]$,

\item\label{tbs:rooted-tree} a vertex $v\in V$ is a root if and only if $[v]=\Des(v)$,

\item\label{tbs:one-root} if $u,v\in\Troot(\Tree)$ and $u\neq v$ then $[u]\neq[v]$,
i.e.\ a tree contains at most one root,

\item\label{tbs:countable} the degree of every vertex is bounded by $\aleph_0$
(each vertex has at most countably many children) if and only if
each tree in the forest is at most countable.
\end{abece}
\end{lem}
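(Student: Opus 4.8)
The plan is to prove part (a) first and then read off the remaining three parts from it, using only \LEM{tree-basics} and the forest axiom. For the displayed identity \eqref{eq:tree}, the inclusion $\bigcup_{k\geq k_0}\Des(\parf^k(v))\subseteq[v]$ is immediate from the definition of $\sim$: if $w\in\Des(\parf^k(v))$ then $\parf^j(w)=\parf^k(v)$ for some $j\in\ZP$, which is exactly an instance of the relation $w\sim v$. For the reverse inclusion I take $w\in[v]$, so that $\parf^n(w)=\parf^m(v)$ for some $m,n\in\ZP$, and set $k:=\max(m,k_0)$. Applying the iterate $\parf^{k-m}$ to both sides (legitimate since $k\geq m$) gives $\parf^{n+k-m}(w)=\parf^{k}(v)$ with $k\geq k_0$, hence $w\in\Des(\parf^k(v))$. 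The parenthetical claim $\Des(v)\subseteq[v]$ is just the case $k_0=0$.

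Part (b) then follows quickly. If $v$ is a root, then $\parf^k(v)=v$ for every $k\in\ZP$, so \eqref{eq:tree} with $k_0=0$ collapses to $[v]=\Des(v)$. Conversely, if $[v]=\Des(v)$, then since $\parf(v)\sim v$ we have $\parf(v)\in\Des(v)$, i.e.\ $\parf^{n+1}(v)=v$ for some $n\in\ZP$; the forest axiom (equivalently the root characterisation in \LEM{tree-basics}) forces $\parf(v)=v$, so $v$ is a root. Part (c) is even more direct: if $u,v\in\Troot(\Tree)$ satisfy $[u]=[v]$, then $\parf^n(u)=\parf^m(v)$ for some $m,n\in\ZP$, and since roots are fixed points of $\parf$ this reads $u=v$, contradicting $u\neq v$.

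For part (d) the easy implication ($\Leftarrow$) uses $\Chif(v)=\Chif[1](v)\subseteq\Des(v)\subseteq[v]$, so if every tree is at most countable then every degree is bounded by $\aleph_0$. The substantive implication ($\Rightarrow$) rests on the observation that, assuming all degrees are at most countable, each descendant set $\Des(u)=\bigcup_{n=0}^\infty\Chif[n](u)$ is at most countable: starting from the finite set $\Chif[0](u)=\{u\}$, the recursion $\Chif[n](u)=\bigsqcup_{w\in\Chif[n-1](u)}\Chif(w)$ from \LEM{tree-basics} exhibits each level $\Chif[n](u)$ as a countable union of at most countable sets, hence at most countable by induction on $n$, and the countable union over $n$ inherits this. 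Finally, part (a) writes an arbitrary tree as $[v]=\bigcup_{k=0}^\infty\Des(\parf^k(v))$, a countable union of such at most countable sets, so $[v]$ is at most countable.

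The arguments are essentially bookkeeping once part (a) is in hand, and I do not expect a genuine obstacle. The single point demanding care is the index shift in the reverse inclusion of \eqref{eq:tree}: the choice $k=\max(m,k_0)$ is precisely what simultaneously guarantees $k\geq k_0$ and that $\parf^{k-m}$ is a non-negative (hence admissible) iterate, and getting this matching right is the only place where a careless argument could fail. Everything else reduces to the forest axiom and the disjointness/recursion identities already recorded in \LEM{tree-basics}.
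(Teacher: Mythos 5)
Your proof is correct and follows essentially the same route as the paper: establish the two inclusions of \eqref{eq:tree} using the characterisation $\Des(x)=\{u:\exists_{n\in\ZP}\,\parf^n(u)=x\}$, then derive (b), (c), (d) from (a) together with the forest axiom and the recursion for $\Chif[n]$. The only cosmetic difference is in the reverse inclusion of (a), where the paper invokes the monotonicity $\Des(\parf^n(v))\subseteq\Des(\parf^{n+k_0}(v))$ from \LEM{tree-basics} while you apply the extra iterate $\parf^{k-m}$ with $k=\max(m,k_0)$ directly; both amount to the same index bookkeeping.
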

\begin{proof}
Observe that
\begin{equation}\label{eq:Des}
\Des(v)=\{u\in V: \exists_{n\in\ZP}\ \parf^n(u)=v\}. \end{equation}
Indeed, ``$\subseteq$'' is a straightforward conseqence of \eqref{eq:child-sets}.
For a vertex $u$ from the left hand side it suffices to pick
$n_u:=\min\{n\in\ZP:\parf^n(u)=v\}$. Then $u\in\Chif[n](v)\subseteq\Des(v)$.

The right hand side of (a) is the set of those vertices $u\in V$ for
which there exist $k\geq k_0$ and $n\geq 0$ such that $\parf^k(v)=\parf^n(u)$.
This means $u\in [v]$ and hence '$\supseteq$'' in (a) is proven.

Given $u\in[v]$ we pick $m,n\in\ZP$ such that $\parf^n(v)=\parf^m(u)$.
Then $u\in\Des(\parf^m(u))=\Des(\parf^n(v))$, according to \eqref{eq:Des}.
By \LEM{tree-basics}~(\ref{tbs:Des-Des}) we have $\Des(\parf^n(v))\subseteq \Des(\parf^{n+k_0}(v))$.
This completes the proof of ``$\subseteq$'' and (a).

(b) If $\parf(v)=v$ then $\Des(v)=\Des(\parf^k(v))$ for any $k\geq 0$.
Applying (a) we obtain $[v]=\Des(v)$.
Assumption that $\parf(v)\neq v$ implies $\parf(v)\notin \Des(v)$.
Indeed, otherwise by \eqref{eq:Des} we would have $\parf^k(\parf(v))=v$ for some $k\geq 0$
and consequently $\parf(v)= v$.
Always $\parf(v)\in [v]$, hence $[v]\nsubseteq \Des(v)$ for $v\notin\Troot(\Tree)$. 

Statement (c) is obvious, since for a root $v$ we have $[v]=\Des(v)$ and by \eqref{eq:Des},
for some $k\geq 0$, we have $v=\parf^k(u)=u$ provided that $u$ is a root.

(d) Since all children of a vertex belong to the same tree the countability
of the trees gives an upper bound for the degrees. On the other hand,
from \LEM{tree-basics}~\eqref{eq:dzieci} we know that if the degrees are
at most countable so are the sets $\Chif[k](v)$ ($k\geq 1$) and consequently $\Des(v)$.
Combining this fact with (a) yields the remaining implication in (d).
\end{proof}

The directed forest whose vertices are all of degree at most $\aleph_0$
will be called \emph{locally countable}. This property does not forbid the
whole set of vertices to be uncountable.

If every tree in the forest contains a root we will call such a forest \emph{fully rooted}.

A~\emph{directed tree} is a directed forest containing only one tree,
in other words, a~forest which is connected as a~graph.

For every tree $T\subseteq V$, the pair $(T,\parf\rest{T})$ is a~directed tree and
the set $\Troot(T,\parf\rest{T})$ has at most one element (\LEM{tree}~(c)).
A~directed tree with a~root is called a \emph{rooted directed tree}.
We usually denote the root of such tree by $\omega_\Tree$ or simply $\omega$ if
there is no ambiguity.
A~directed tree without a root is called \emph{rootless}.

For $W\subseteq V$ we can define a~subforest $\Tree\rest{W}=(W,\parf_W)$, where
\[ \parf_W(v):=\begin{cases}\parf(v) & \text{if }\parf(v)\in W\\
v& \text{if }\parf(v)\notin W\end{cases},\quad v\in W.\]

To show that the above definition actually meets the conditions for a directed forest
we will prove the following more general lemma. 
\begin{lem}{forestmaking} 
Let $\Tree_0=(V_0,\parf_0)$ be a directed forest and $V$, $R$ be sets such that
$V\setminus R \subseteq V_0$ and $\parf_0\bigl((V_0\cap V)\setminus R\bigr)\subseteq V$. Define
\[ \parf(v):=\begin{cases}\parf_0(v) & \text{if }v \notin  R\\
v& \text{if }v \in R\end{cases},\quad v\in V. \]
Then $\Tree=(V,\parf)$ is a directed forest.
\end{lem}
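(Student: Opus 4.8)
The plan is to verify the two defining conditions of a directed forest for $(V,\parf)$ in turn, the first being routine and the second carrying the actual content. First I would check that $\parf$ is a well-defined self-map of $V$ (the set $V$ being nonempty, as is tacit in the conclusion that $\Tree$ is a forest). For $v\in R$ this is immediate, since $\parf(v)=v\in V$. For $v\notin R$ we have $v\in V\setminus R\subseteq V_0$, hence $v\in(V_0\cap V)\setminus R$, and the hypothesis $\parf_0\bigl((V_0\cap V)\setminus R\bigr)\subseteq V$ gives $\parf(v)=\parf_0(v)\in V$. Thus $\parf\colon V\to V$ is well defined; the point of this step is simply that the two hypotheses on $V$ and $R$ are exactly what is needed to keep the values of $\parf_0$ inside $V$.

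For the second condition I would fix $v\in V$ and $n\geq 1$ with $\parf^n(v)=v$ and deduce $\parf(v)=v$. Writing $v_j:=\parf^j(v)$ for the forward orbit, the case $v\in R$ is trivial, so I assume $v\notin R$. The key observation is that every vertex of $R$ is an \emph{absorbing} fixed point of $\parf$: once $v_j\in R$ one has $v_{j+1}=v_j$, and hence $v_i=v_j$ for all $i\geq j$. Playing this against periodicity — from $\parf^n(v)=v$ one gets $v_{i}=v_{i+n}$ for every $i$, so the orbit is finite and visited cyclically — I would argue that if some $v_j$ lay in $R$, then choosing $k$ with $i+kn\geq j$ gives $v_i=v_{i+kn}=v_j$ for \emph{every} index $i$, so the whole orbit collapses to that single point of $R$; in particular $v=v_0\in R$, contradicting $v\notin R$. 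Therefore $v_j\notin R$ for all $j$.

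Consequently each $v_j\in V\setminus R\subseteq V_0$ and $\parf(v_j)=\parf_0(v_j)$, so by induction $v_j=\parf_0^{\,j}(v)$ for every $j$; in particular $\parf_0^{\,n}(v)=v_n=v$. Since $\Tree_0$ is a directed forest and $n\geq 1$, its second defining condition yields $\parf_0(v)=v$, whence $\parf(v)=\parf_0(v)=v$, as required.

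The main obstacle I anticipate is the bookkeeping around the set $R$: because membership in $R$ turns a vertex into a $\parf$-fixed point while $\parf_0$ may still move it, the $\parf$-orbit can diverge from the $\parf_0$-orbit, and one must exclude the ``mixed'' situation in which the orbit partly sits in $R$ and partly outside. The absorbing-fixed-point remark, combined with the periodicity forced by $\parf^n(v)=v$, is precisely what rules this out and lets me transfer the acyclicity of $\Tree_0$ to $\Tree$.
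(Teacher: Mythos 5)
Your proposal is correct and follows essentially the same route as the paper: both arguments rest on the observation that points of $R$ are absorbing fixed points of $\parf$, so a periodic orbit either collapses into $R$ (forcing $\parf(v)=v$) or never meets $R$ and is therefore a genuine $\parf_0$-orbit, to which the acyclicity of $\Tree_0$ applies. The paper packages this slightly differently (it first enlarges $R$ by the $\parf_0$-fixed points and then locates the first index where the $\parf$- and $\parf_0$-orbits diverge), but the underlying idea is identical, and your explicit check that $\parf$ maps $V$ into $V$ is a welcome addition the paper leaves implicit.
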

\begin{proof} Observe that if we replace the set $R$ by $R\cup\{v\in V_0\cap V: \parf_0(v)=v\}$,
 the function $\parf$ does not change. Hence, we can assume that
$\parf(v)=\parf_0(v)\neq v$ for $v\in V\setminus R$.

We will prove that if $\parf^n(v)=v$ for some
$v\in V$ and $n\in\N$ then $v\in R$ and hence $\parf(v)=v$.

We focus on the case $v\in V_0$ (otherwise we are already done).
If $\parf^n(v)=\parf_0^n(v)=v$ we have $v\in R$ by the properites of the directed
forest $\Tree_0$.

We are left with the case $v=\parf^n(v)\neq\parf_0^n(v)$. There exists the minimal
$k\in \{1,\ldots,n\}$ satisfying $\parf^k(v)\neq \parf_0(\parf^{k-1}(v))$.
From the definition of $\parf$ we know that $\parf^{k-1}(v)\in R$ and
hence $v=\parf^n(v)=\parf^{k-1}(v)\in R$.
\end{proof}

To prove that the subforest $\Tree\rest{W}$ is indeed a directed forest
we use the above lemma with $\Tree_0=\Tree$, $V=W$ and $R=\{v\in W: \parf(v)\notin W\}$.

If $W=\Des(v)$ for some $v\in V$ we write $\Tree_{(v\to)}:=\Tree\rest{W}$. \label{def:Des-tree}
Then $\Tree_{(v\to)}$ is a rooted directed tree and $\Troot(\Tree_{(v\to)})=\{v\}$.

\begin{lem}{infinite-forest}\mbox{}\begin{abece}
\item A non-degenerate leafless directed forest has infinitely many vertices.
\item A not fully-rooted directed forest has infinitely many vertices.
\end{abece} \end{lem}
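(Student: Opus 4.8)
The plan is to prove both parts by contradiction, in each case assuming that $V$ is finite and extracting a structural rigidity from the defining acyclicity axiom $\parf^n(v)=v\Rightarrow\parf(v)=v$. The single governing observation is that on a finite set the orbit $v,\parf(v),\parf^2(v),\dots$ of any vertex must eventually repeat, and every repetition produces a periodic point, which the forest axiom immediately collapses to a fixed point, i.e.\ a root.

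For part (b) I would establish the contrapositive: a forest with finitely many vertices is fully rooted. Fixing an arbitrary $v\in V$ and using finiteness of $V$, there are indices $i<j$ with $\parf^i(v)=\parf^j(v)$; writing $u:=\parf^i(v)$ yields $\parf^{\,j-i}(u)=u$ with $j-i\ge 1$, so the forest axiom forces $\parf(u)=u$, that is $u\in\Troot(\Tree)$. Since $u=\parf^i(v)\in[v]$, the tree $[v]$ contains a root; as $v$ was arbitrary, every tree is rooted and the forest is fully rooted. Consequently a not fully-rooted forest cannot be finite, which is (b).

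For part (a) I would exploit that a leafless forest is precisely one for which $\parf$ is surjective (by definition $\parf(V)=V$; equivalently, by \LEM{tree-basics}~(\ref{tbs:leafs}), every vertex of $V^\circ$ has a child). If, in addition, $V$ is finite, then a surjection of a finite set onto itself is a bijection, so $\parf$ is a permutation of $V$. Every element of $V$ therefore lies on a cycle of this permutation, meaning $\parf^n(v)=v$ for some $n\ge 1$ depending on $v$; the forest axiom upgrades each such relation to $\parf(v)=v$. Hence $V=\Troot(\Tree)$ and $V^\circ=\emptyset$, contradicting non-degeneracy, so a non-degenerate leafless forest must be infinite.

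I expect no serious obstacle here: the whole content is the one step converting \emph{``$\parf$ has a periodic point''} into \emph{``$\parf$ has a fixed point''} through the defining condition of a directed forest. The only care required is bookkeeping the indices so that the period $j-i$ in (b) (respectively the cycle length $n$ in (a)) is at least $1$ before invoking the axiom, together with the elementary finite-set fact that a surjection is a bijection, which is what promotes leaflessness to the permutation property used in (a).
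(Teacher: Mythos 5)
Your proof is correct. For part (b) you argue exactly as the paper does, merely in contrapositive form: both proofs observe that in a finite vertex set the ancestor orbit $v,\parf(v),\parf^2(v),\dots$ must repeat, that a repetition yields a periodic point, and that the forest axiom collapses any periodic point to a root lying in $[v]$. For part (a), however, you take a genuinely different route. The paper picks $v_0\in V^\circ$ and inductively builds an infinite descending chain $v_{n+1}\in\Chif(v_n)$ (possible since no $v_n$ is a leaf), then uses acyclicity to see that the $v_n$ are pairwise distinct, thereby exhibiting an explicit infinite subset of $V$. You instead note that leaflessness means $\parf$ is surjective, so on a finite $V$ it would be a permutation; then every vertex is periodic, hence by the forest axiom a fixed point, forcing $V=\Troot(\Tree)$ and contradicting non-degeneracy. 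Your version is arguably slicker and unifies (a) with (b) under the single principle ``periodic $\Rightarrow$ fixed,'' at the cost of being purely a contradiction argument; the paper's construction has the small added value of displaying a concrete infinite family of descendants, which is in the same spirit as arguments used elsewhere in Section~2. Both are complete and there is no gap in yours.
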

\begin{proof} Let $\Tree=(V,\parf)$ be a directed forest.

(a) We can inductively construct an infinite sequence of mutually distinct
vertices $\{v_n\}_{n=0}^\infty$ satisfying $\parf(v_n)=v_{n-1}$ for $n\in\N$.
Pick any $v_0\in V^\circ$. For $n\geq 0$, since $v_n$ is not a leaf,
there can be always found $v_{n+1}\in\parf^{-1}(v_n)=\Chif(v_n)$
(cf.\ \LEM{tree-basics}~(\ref{tbs:leafs})).
From the definition of a directed forest and the fact that no element
of the sequence is a root follows that the elements do not repeat.

(b) For a not fully rooted forest the set $\{\parf^k(v): k\in \N\}\subseteq V$
is infinite for any vertex $v\in V$ belonging to a tree with no root.
\end{proof}

For directed forest a natural notion of isomorphism can be introduced. In most cases we are
not making any distinction between isomorphic forests.
\begin{Def}
Two directed forests $\Tree_1=(V_1,\parf_1)$ and $\Tree_2=(V_2,\parf_2)$ are said to be
isomorphic if there exists a bijection $f:V_1\to V_2$ such that
$\parf_2\circ f = f\circ\parf_1$.
\end{Def}

Forests are said to be disjoint if so are their sets of vertices. We can always
assume that considered forests are disjoint by taking isomorphic forests if necessary.

\begin{dfn}{direct-sum} 
For a~nonempty family $\{\Tree_j=(V_j,\parf_j) :j\in J\}$ of arbitrary disjoint
directed forests we define their \textbf{direct sum} as
\[ \bigoplus_{j\in J} \Tree_j := \biggl( \bigsqcup_{j\in J} V_j,\ \bigcup_{j\in J} \parf_j\biggr). \]
It is basically a disjoint union of the graphs.
\end{dfn}
The family of trees in $\bigoplus_{j\in J} \Tree_j$ is precisely the union of
families of trees in $\Tree_j$ where $j$ varies over $J$. Any forest can be seen as
the direct sum of all its trees, i.e. 
\[ \Tree = \bigoplus_{W \text{\,-\,tree in }\Tree} \Tree\rest{W}. \]

The important operation on rooted directed trees is joining them by adding a new root
as a parent for their original roots. The definition below presents this operation in detail.

\begin{dfn}{rooted-sum}
For a~family $\{\Tree_j=(V_j,\parf_j) :j\in J\}$ of rooted directed trees
we define their \textbf{rooted sum} as the directed tree
\[\rootsum_{j\in J}\Tree_j := \bigl(V_J, \parf_J\bigr), \]
where $\displaystyle
V_J:=\{\omega\}\sqcup\bigsqcup_{j\in J}V_j$ and
\[
\displaystyle \parf_J(v):=\begin{cases}\parf_j(v) & \text{if }v\in V^\circ_j\\
\omega &\text{otherwise}\end{cases},\quad v\in V_J.\]
\end{dfn}

\begin{figure}[hbt]
\includegraphics[scale=.75]{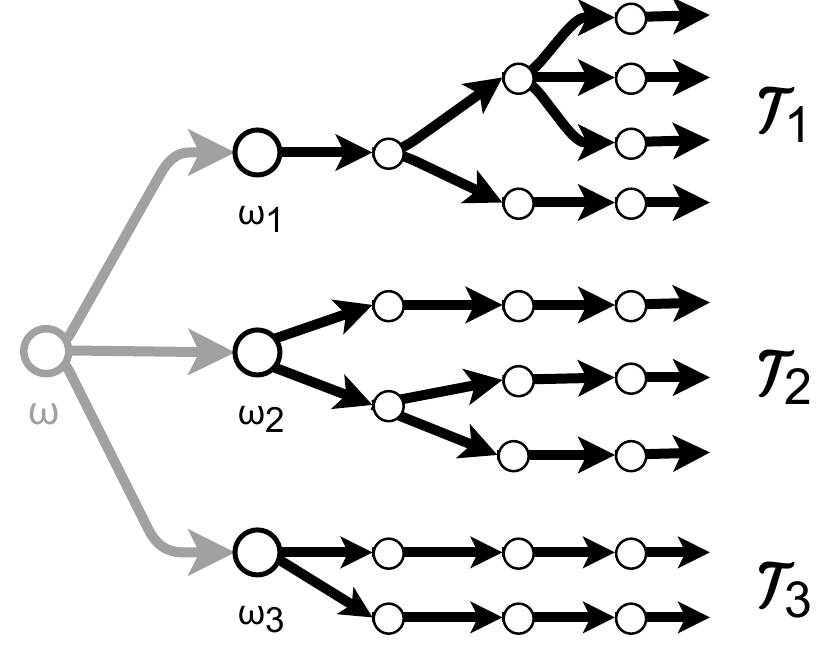}
\caption{The rooted sum of the family $\{\Tree_1,\Tree_2,\Tree_3\}$.
On this and forthcoming figures the arrows point from vertices to their children.}
\end{figure}

Note that if the given family is empty, its rooted sum contains only one vertex
and is a degenerate directed forest.

We will call any directed tree isomorphic to $\rootsum_{j\in J}\Tree_j$ a rooted sum
of the family $\{\Tree_j\}_{j\in J}$. In particular 
\begin{property}{child-sum}
for an arbitrary directed forest $\Tree=(V,\parf)$ and $v\in V$,
the directed tree $\Tree_{(v\to)}$ is a rooted sum of the family
$\{\Tree_{(u\to)}:u\in\Chif(v)\}$.
\end{property}

The following operation on directed trees has the greatest importance
regarding the main topic of this paper.

\begin{dfn}{backward-ext}
Given a directed tree $\Tree=(V,\parf)$ with root $\omega$ and $k\geq 0$
we define its {\bf $k$-step backward extension} as
$\Tree\sqlow{k}:=\bigl(V\sqlow{k}, \parf '\bigr)$,
where $V\sqlow{k}:=V\sqcup\{\omega_j\}_{j=1}^k$, $\omega_j$'s are new distinct vertices, $\omega_0=\omega$ and 
\[ \parf '(v):=\begin{cases}\parf(v)&\text{if }v\in V^\circ\\
\omega_{j}&\text{if }v=\omega_{j-1},\ j=1,\ldots,k\\\omega_k&\text{if }v=\omega_k\end{cases}
,\quad v\in V\sqlow{k}.\]
\end{dfn}

\begin{figure}[htb]\label{pic:back-ext}
\includegraphics[scale=1.1]{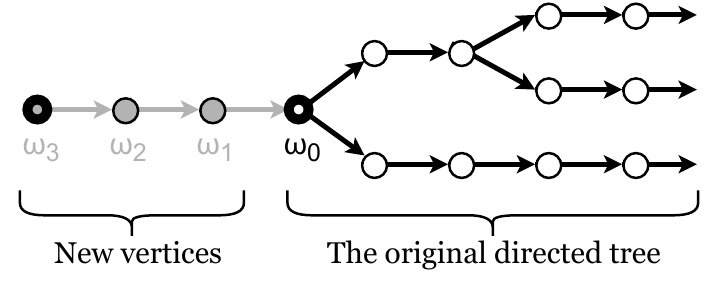}
\caption{$3$-step backward extension of a directed tree.}
\end{figure}
Similarly as before, any directed tree isomorphic to the constructed above can be called
a $k$-step backward extension of $\Tree$. A directed tree $\Tree '$
with the root $v_0$ is a $k$-step backward extension of $\Tree$ if and only if
for every $j=1,\ldots,k$, the set $\Chif[j]_{\Tree '}(v_0)$ is a singleton
and for $u\in \Chif[k]_{\Tree '}(v_0)$ the directed tree $\Tree '_{(u\to)}$
is isomorphic to $\Tree$. Note also that $\Tree\sqlow{0}=\Tree$.

\begin{dfn}{tree-power}
For a directed forest $\Tree=(V,\parf)$ and a positive integer $k$ one can define
{\bf $k$-th power of the forest} $\Tree^k:=(V,\parf^{[k]})$ by
\[ \parf^{[k]}(v):= \begin{cases}
\parf^k(v) &\text{if } \parf^{k-1}(v)\notin \Troot(\Tree)\\
v &\text{if }\parf^{k-1}(v)\in \Troot(\Tree) \end{cases},\qquad v\in V.\]
\end{dfn}

The $k$-th power of the given forest is a forest for which children of a vertex
are precisely the $k$-th children of that vertex in the original forest.
In other words, edges in the $k$-th power correspond to the simple paths of length $k$
in the $1$-st power.

\begin{figure}[ht]
\includegraphics{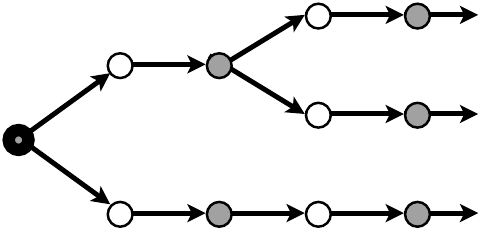}\qquad
\includegraphics{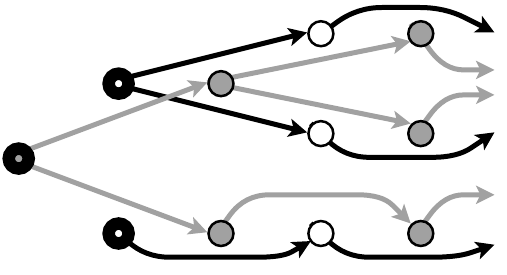}
\caption{A directed tree and its $2$-nd power.
Roots in the given forests are marked with bold circles.}
\end{figure}

\begin{lem}{tree-pow-def}  
Let $\Tree=(V,\parf)$ be a directed forest. Then
\begin{abece}
\item\label{tp:forest} $\Tree^k$ is a directed forest for any $k\in\N$,
\item\label{tp:identity} $\Tree^1 = \Tree$,

\item\label{tp:child} $\Chif_{\Tree^k}(v) = \Chif[k]_\Tree(v)$ for $v\in V$ and $k\in\N$,

\item\label{tp:root0} $\displaystyle \Troot(\Tree^k)=
	\{v\in V: \parf^{k-1}(v)\in \Troot(\Tree)\}$ for  $k\in\N$,
	
\item\label{tp:roots} $\displaystyle \Troot(\Tree^k)=
\bigcup_{v\in\Troot(\Tree)} \bigcup_{j=0}^{k-1} \Chif[j](v)$ for  $k\in\N$,

\item\label{tp:mult} $(\Tree^k)^l = \Tree^{kl}$ for all $k,l\in\N$.
\end{abece}
\end{lem}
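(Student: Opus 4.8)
The plan is to dispatch the elementary identities (b) and (c) straight from the definitions, read off the root descriptions (d) and (e), and isolate the analysis of iterates of $\parf^{[k]}$ for the two statements that really require it, namely (a) and (f). Part (b) is immediate, since for $k=1$ both branches of the definition of $\parf^{[1]}$ collapse to $\parf$ (a root already satisfies $\parf(v)=v$). For (c) I would unfold $\Chif_{\Tree^k}(v)=(\parf^{[k]})^{-1}(v)\setminus\{v\}$ and match it with $\Chif[k](v)=\{u:\parf^k(u)=v\neq\parf^{k-1}(u)\}$. Writing $w=\parf^{k-1}(u)$, the relation $w\in\Troot(\Tree)$ amounts to $\parf(w)=w$, i.e.\ to $\parf^k(u)=\parf^{k-1}(u)$; hence for $u\neq v$ the equation $\parf^{[k]}(u)=v$ can only hold through the first branch ($w\notin\Troot(\Tree)$, so $\parf^k(u)=v$), and then $\parf^{k-1}(u)\neq v$, while conversely $\parf^k(u)=v\neq\parf^{k-1}(u)$ forces $w\notin\Troot(\Tree)$ and $u\neq v$. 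Both inclusions follow.

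For (d) I use $\Troot(\Tree^k)=\{v:\parf^{[k]}(v)=v\}$ and examine the two branches: the branch $\parf^{k-1}(v)\notin\Troot(\Tree)$ with $\parf^k(v)=v$ cannot occur, because $\parf^k(v)=v$ forces $v\in\Troot(\Tree)$ by \LEM{tree-basics}, whence $\parf^{k-1}(v)=v\in\Troot(\Tree)$, a contradiction. Thus only $\parf^{k-1}(v)\in\Troot(\Tree)$ remains, which is exactly (d). Statement (e) is then a rewriting of $\{v:\parf^{k-1}(v)\in\Troot(\Tree)\}$: if $\parf^{k-1}(v)$ is a root, let $j\le k-1$ be least with $\parf^{j}(v)\in\Troot(\Tree)$; then $\parf^{j-1}(v)\neq\parf^{j}(v)$, so $v\in\Chif[j](\parf^{j}(v))$, and conversely every $u\in\Chif[j](\omega)$ with $\omega\in\Troot(\Tree)$ and $j\le k-1$ satisfies $\parf^{k-1}(u)=\omega\in\Troot(\Tree)$ because roots are $\parf$-fixed.

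The common tool for (a) and (f) is a description of the orbit of a vertex under $\parf^{[k]}$: each application either leaves $v$ fixed (precisely when $\parf^{k-1}(v)\in\Troot(\Tree)$) or sends it to $\parf^k(v)$, and once a step is fixed every later step is too. Concretely, setting $v_m=(\parf^{[k]})^m(v)$, a short induction shows that as long as $\parf^{kj-1}(v)\notin\Troot(\Tree)$ for $j=1,\dots,m$ one has $v_m=\parf^{km}(v)$, and that at the first index with $\parf^{km-1}(v)\in\Troot(\Tree)$ the orbit becomes constant from $v_{m-1}$ on. For (a) this rules out nontrivial returns: suppose $(\parf^{[k]})^n(v)=v$ with $n\ge 1$ but $\parf^{[k]}(v)\neq v$; then either all steps are of $\parf^k$-type and $v=\parf^{kn}(v)$, or the orbit is constant at some $v_i=v$ with $i\ge 1$ and again $v=\parf^{ki}(v)$. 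In both cases \LEM{tree-basics} gives $v\in\Troot(\Tree)$, hence $\parf^{k-1}(v)=v\in\Troot(\Tree)$, contradicting $\parf^{[k]}(v)\neq v$. Therefore $\Tree^k$ is a directed forest.

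For (f) I compute $(\parf^{[k]})^{[l]}(v)$ from the definition of the $l$-th power taken inside $\Tree^k$ and reduce the identity $(\parf^{[k]})^{[l]}(v)=\parf^{[kl]}(v)$ to two checks settled by the orbit description together with (d). The first is the key identity $\parf^{k-1}\bigl((\parf^{[k]})^{l-1}(v)\bigr)=\parf^{kl-1}(v)$, valid in both regimes (in the stabilised case both sides reduce to the same root, since roots are $\parf$-fixed); combined with (d) it yields $(\parf^{[k]})^{l-1}(v)\in\Troot(\Tree^k)\iff\parf^{kl-1}(v)\in\Troot(\Tree)$, so the two power operations hit the ``reset to $v$'' branch simultaneously. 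The second check is that when this branch is avoided one has $(\parf^{[k]})^l(v)=\parf^{kl}(v)$, which is exactly the all-$\parf^k$ case of the orbit description. I expect the only real obstacle to be the bookkeeping in this orbit analysis --- pinning down where the $\parf^{[k]}$-orbit stabilises and verifying $\parf^{k-1}\bigl((\parf^{[k]})^{l-1}(v)\bigr)=\parf^{kl-1}(v)$ in each regime --- since this single computation simultaneously delivers the forest property (a) and the exponent law (f); parts (b)--(e) are routine unwindings of the definitions.
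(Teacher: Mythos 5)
Your proposal is correct, and for parts (b)--(f) it follows essentially the same path as the paper: (b)--(e) are the same routine unwindings, and your ``orbit description'' of the iterates of $\parf^{[k]}$ (all-$\parf^k$ steps until the first index $m$ with $\parf^{km-1}(v)\in\Troot(\Tree)$, after which the orbit is constant at $\parf^{k(m-1)}(v)$) is exactly the case analysis the paper performs for (f), just packaged through the single identity $\parf^{k-1}\bigl((\parf^{[k]})^{l-1}(v)\bigr)=\parf^{kl-1}(v)$ rather than treating the two regimes separately from the outset.

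The one genuine divergence is in (a). The paper disposes of it in one line by invoking \LEM{forestmaking} with $\Tree_0=(V,\parf^k)$ and $R=\{v\in V:\parf^{k-1}(v)\in\Troot(\Tree)\}$, so the only thing left to check is that $(V,\parf^k)$ is itself a directed forest. You instead verify the no-nontrivial-cycle axiom directly from the orbit analysis: any return $(\parf^{[k]})^n(v)=v$ forces $v=\parf^{ki}(v)$ for some $i\ge1$ (or immediately $\parf^{[k]}(v)=v$ in the degenerate case $i=0$, which you should state explicitly since your phrasing ``with $i\ge1$'' skips it), whence $v\in\Troot(\Tree)$ and $\parf^{[k]}(v)=v$. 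Your route is self-contained and reuses machinery you need for (f) anyway; the paper's route is shorter but leans on the auxiliary lemma it set up precisely for constructions of this shape. Both are sound, and the choice is a matter of economy rather than substance.
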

\begin{proof}
(\ref{tp:forest}) It suffices to use \LEM{forestmaking} with $\Tree_0:=(V,\parf^k)$
and $R:=\{v\in V: \parf^{k-1}(v)\in \Troot(\Tree) \}$. It is straightforward to verify
that the considered $\Tree_0$ is a directed forest whenever $\Tree$ is.

(\ref{tp:identity}) By the definition of a root in a directed forest one can
easily see that $\parf^{[1]}=\parf$.

(\ref{tp:child}) can be derived as follows
\begin{align*}
\Chif_{\Tree^k}(v)&=\{w\in V\setminus\{v\}: \parf^{[k]}(w)=v\} \\
&= \{w\in V\setminus\{v\}: \parf^k(w)=v,\ \parf^{k-1}(w)\notin\Troot(\Tree) \}\\
&= \{w\in V: \parf^k(w)=v\neq\parf^{k-1}(w) \}
\refoneq{eq:child-sets}\Chif[k]_{\Tree}(v).
\end{align*}

(\ref{tp:root0}) By definition, $\parf^{[k]}(v)=v$ whenever $\parf^{k-1}(v)$ is a root in $\Tree$
or $\parf^k(v)=v$. The latter case implies that $v$ is a root in $\Tree$
and hence is never actually used. This gives us the desired characterisation of
the roots in $\Tree^k$.

(\ref{tp:roots}) According to (\ref{tp:root0}) it suffices to prove that
\[ L:= \{v\in V: \parf^{k-1}(v)\in \Troot(\Tree)\} =
\bigcup_{v\in\Troot(\Tree)}\bigcup_{j=0}^{k-1}\Chif[j](v) =:R.\]
If $u\in R$ then $\parf^j(u)=v$ for some $v\in\Troot(\Tree)$ and
$j\in \{0,\ldots,k-1\}$. Since $v$ is a root, we have
$\parf^j(u)=v\in\Troot(\Tree)$ and hence $u\in L$.
For $u\in L$ we can pick $j_0=\min\{j\in\ZP: \parf^j(u)\in\Troot(\Tree)\}$.
Then $j_0\leq k-1$ and $u\in \Chif[j_0](v)$, where $v=\parf^{j_0}(u)\in\Troot(\Tree)$.
Thus $u\in R$ and (\ref{tp:roots}) is proven.

(\ref{tp:mult})
Fix $v\in V$. We want to show that $(\parf^{[k]})^{[l]}(v) =\parf^{[kl]}(v)$.

If $\parf^{kl-1}(v)\notin \Troot(\Tree)$, then $\parf^{k-1}(\parf^{kj}(v))\notin \Troot(\Tree)$
for $j=0,\ldots,l-1$. This means that $(\parf^{[k]})^j(v)=\parf^{kj}(v)$ for $j\leq l$.
In particular, $(\parf^{[k]})^{l-1}(v)\notin \Troot(\Tree^k)$ and
\[ (\parf^{[k]})^{[l]}(v)=(\parf^{[k]})^l(v)=\parf^{kl}(v)=\parf^{[kl]}(v). \]

We are left with the case $\parf^{kl-1}(v)\in \Troot(\Tree)$. It follows from the definition
of $\parf^{[k]}$ that then $\parf^{(l-1)k}(v)\in\Troot(\Tree^k)$.
We want to show that $(\parf^{[k]})^{l-1}(v)\in \Troot(\Tree^k)$.
Let $l_0:=\min\{j\geq 0: \parf^{jk}(v)\in\Troot(\Tree^k)\}$. Clearly $l_0<l$ and
$(\parf^{[k]})^{l_0}(v)=\parf^{kl_0}(v)$ (the first case in the definition of $\parf^{[k]}$
is applicable). Moreover $\parf^{kl_0}(v)\in\Troot(\Tree^k)$
leads to the conclusion $(\parf^{[k]})^{l-1}(v)=(\parf^{[k]})^{l_0}(v)\in \Troot(\Tree^k)$.

According to the definition of $(\parf^{[k]})^{[l]}$, we have proven that
$(\parf^{[k]})^{[l]}(v)=v=\parf^{[kl]}(v)$ whenever $\parf^{kl-1}(v)\in\Troot(\Tree)$.
The proof of (\ref{tp:mult}) is complete.
\end{proof}

The following lemma presents how some properties of the directed forest
are reflected by its $k$-th power.

\begin{lem}{tree-power} 
Let $\Tree=(V,\parf)$ be a directed forest and $k$ be a~positive integer. Then
\begin{abece}
\item\label{tp:leafless} if $\Tree$ is leafless then so is $\Tree^k$,
\item\label{tp:allroots} $\Tree$ is fully rooted if and only if $\Tree^k$ is,
\item\label{tp:infinite} if $\Tree$ has infinitely many vertices, then
there are at least $k$ trees in $\Tree^k$,
\item\label{tp:rootless} if $\Tree$ is a rootless directed tree, then
$\Tree^k$ is a rootless forest with precisely $k$ trees,
\item\label{tp:direct-sum} for a given non-empty family $\{\Tree_j\}_{j\in J}$ of directed forests,
\[ \biggl( \bigoplus_{j\in J}\Tree_j\biggr)^k = \bigoplus_{j\in J}\left( \Tree_j^k \right). \]
\end{abece}
\end{lem}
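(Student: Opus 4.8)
The plan is to verify each of the five statements in \LEM{tree-power} largely by unwinding the definition of $\Tree^k$ together with the characterisations of its children and roots already recorded in \LEM{tree-pow-def}. Throughout I would keep \LEM{tree-pow-def}~(\ref{tp:child}) and (\ref{tp:roots}) at hand, since they translate the power construction back into the language of the original forest's $k$-th child sets $\Chif[k]$ and its roots.

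For (\ref{tp:leafless}), recall that a forest is leafless precisely when $\parf(V)=V$, i.e.\ every vertex has a child. I would argue that if $\Tree$ is leafless then iterating the parent map shows $\Chif[k]_\Tree(v)\neq\emptyset$ for every $v\in V$: starting from any $w$ one repeatedly picks a child, producing a vertex $u$ with $\parf^k(u)=w$ and $\parf^{k-1}(u)\neq w$ (the inequality holding because no vertex on the chain is a root, by leaflessness and \LEM{infinite-forest}). By \LEM{tree-pow-def}~(\ref{tp:child}) this set is exactly $\Chif_{\Tree^k}(v)$, so $\Tree^k$ has no leaves either. For (\ref{tp:allroots}), I would combine \LEM{tree-pow-def}~(\ref{tp:roots}) with \LEM{tree}~(\ref{tbs:tree})--(\ref{tbs:rooted-tree}): a tree of $\Tree$ is rooted iff it contains a root, and the formula $\Troot(\Tree^k)=\bigcup_{v\in\Troot(\Tree)}\bigcup_{j=0}^{k-1}\Chif[j](v)$ shows the roots of $\Tree^k$ lie in the same trees as the roots of $\Tree$; conversely any root of $\Tree^k$ satisfies $\parf^{k-1}(v)\in\Troot(\Tree)$ by (\ref{tp:root0}), so rooted trees correspond exactly. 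The remaining subtlety is checking that the power operation does not merge or split trees, which follows since $\parf^{[k]}(v)$ is a power of $\parf(v)$ and hence $[v]$ is preserved.

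For (\ref{tp:infinite}) and (\ref{tp:rootless}) I would count trees via roots. If $\Tree$ is a rootless directed tree, pick any $v$ and consider the backward chain $\{\parf^j(v)\}_{j\geq 0}$, which is infinite with no repetitions (as in \LEM{infinite-forest}~(b)). The vertices $v,\parf(v),\ldots,\parf^{k-1}(v)$ then represent $k$ distinct trees in $\Tree^k$: two of them, say $\parf^i(v)$ and $\parf^j(v)$ with $0\le i<j\le k-1$, lie in the same tree of $\Tree^k$ only if one is a descendant of the other under $\parf^{[k]}$, i.e.\ $\parf^{k m}(\parf^i(v))=\parf^j(v)$ for some $m$, forcing $km+i=j$ with $0<j-i<k$, impossible. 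Since $\Tree^k$ is also rootless by (\ref{tp:allroots}) and every tree of $\Tree$ contributes such a chain, I would argue there are exactly $k$ trees, using that any vertex of $\Tree$ reduces modulo the chain to one of these $k$ representatives. The same distinctness argument gives at least $k$ trees in (\ref{tp:infinite}) once $\Tree$ has an infinite backward chain, which an infinite forest must contain either through a rootless tree or through \LEM{infinite-forest}. Finally (\ref{tp:direct-sum}) is the most mechanical: the direct sum just places the $\parf_j$'s side by side on disjoint vertex sets, and $\parf^{[k]}$ is defined pointwise in terms of iterates of $\parf$ and membership in $\Troot$, both of which are computed within a single summand; so I would simply check the two parent functions agree on each $V_j$.

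The main obstacle I anticipate is (\ref{tp:rootless}), specifically proving there are \emph{exactly} $k$ trees rather than merely at least $k$. The ``at least $k$'' direction is the clean modular-arithmetic argument above, but the ``at most $k$'' direction requires showing every vertex of $\Tree^k$ is $\sim$-equivalent to one of the $k$ chosen representatives $\parf^i(v)$. For this I would fix a vertex $w$, use connectedness of $\Tree$ to find $m,n$ with $\parf^n(w)=\parf^m(v)$, and then track residues modulo $k$ along the $\parf$-chain to land $w$ in the tree of the appropriate $\parf^i(v)$; the bookkeeping of residues under $\parf^{[k]}$ versus $\parf$ is where care is needed, especially since $\parf^{[k]}$ stalls at roots of $\Tree^k$—though in the rootless case there are no such roots, which is exactly what makes the count come out to precisely $k$.
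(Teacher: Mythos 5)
Your overall strategy (reduce everything to \LEM{tree-pow-def} and count trees via residues along backward chains) is workable, but two of your steps rest on claims that are actually false. In part (\ref{tp:allroots}) you assert that ``the power operation does not merge or split trees, \dots\ hence $[v]$ is preserved''. It \emph{does} split trees --- that is precisely the content of parts (\ref{tp:infinite})--(\ref{tp:rootless}) of the very lemma you are proving. Since every tree of $\Tree^k$ is contained in a tree of $\Tree$ but not conversely, knowing that the roots of $\Tree^k$ sit inside the rooted $\Tree$-trees does not tell you that \emph{each} $\Tree^k$-tree inside a rooted $\Tree$-tree contains one of them: a priori a rooted $\Tree$-tree could split into a rooted piece and a rootless piece. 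The missing step is to take an arbitrary $v$ with $\parf^{n}(v)\in\Troot(\Tree)$ and check that $(\parf^{[k]})^{\lceil n/k\rceil}(v)\in\Troot(\Tree^k)$, which is how the paper argues. A smaller instance of the same kind of oversight occurs in (\ref{tp:leafless}): a leafless forest may contain roots with no children (a root is never a leaf by the paper's convention), so your intermediate claim that $\Chif[k](v)\neq\emptyset$ for \emph{every} $v$ fails for such roots; they must be handled separately by noting that they remain roots of $\Tree^k$, hence are not leaves there.

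The most serious gap is in (\ref{tp:infinite}). An infinite directed forest need not contain an infinite backward chain, nor even $k$ consecutive non-roots: take a single root with infinitely many children, or infinitely many degenerate trees. (Neither case is excluded by \LEM{infinite-forest}, which goes in the opposite direction.) For $k\geq 2$ one then has $\parf^{k-1}(v)\in\Troot(\Tree)$ for every $v$, so $\Troot(\Tree^k)=V$ and $\Tree^k$ consists of infinitely many degenerate trees --- the conclusion holds, but your chain argument never reaches it. The paper therefore splits into this degenerate case and the case where some $w$ satisfies $\parf^{\iota}(w)\in V^\circ$ for $\iota=0,\dots,k-1$, and in the latter case builds a height function $h$ with $h(v)=h(\parf(v))+1$ on $V^\circ$, so that every tree of $\Tree^k$ lies in a single residue class of $h$ modulo $k$. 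That device also repairs two flaws in your treatment of (\ref{tp:rootless}) and (\ref{tp:infinite}): two vertices lie in the same tree iff they have a \emph{common} $\parf^{[k]}$-ancestor, not iff one descends from the other, and outside the rootless case the identity $(\parf^{[k]})^m(\parf^i(v))=\parf^{km+i}(v)$ breaks down once the iteration stalls at a root of $\Tree^k$, whereas the congruence $h(\parf^{[k]}(v))\equiv h(v)\pmod{k}$ survives the stalling. Your parts (\ref{tp:rootless}) (where no stalling occurs and the arithmetic goes through with the corrected same-tree criterion) and (\ref{tp:direct-sum}) are essentially fine.
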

\begin{proof}
(\ref{tp:leafless}) Fix $v\in V$.
Since $\Tree$ is leafless there exists $u\in V$ such that $\parf^k(u)=v$.
Then either $\parf^{[k]}(u)=v$ and we are done, or $\parf^{k-1}(u)\in \Troot(\Tree)$.
In the latter case $\parf^k(u)=v\in\Troot(\Tree)$
and hence $\parf^{[k]}(v)=v$ and $v$ is not a leaf.

(\ref{tp:allroots}) If $\Tree$ is fully-rooted then, as stated in
\LEM{tree}~(\ref{tbs:rooted-tree}), every tree in $\Tree$ has a form
$\Des_\Tree(\omega)$ for some $\omega\in \Troot(\Tree)$.
From \eqref{eq:Des} we see that for every $v\in V$ there exists $n_v\in \N$ such
that $\parf^{n_v}(v)\in\Troot(\Tree)$. If this is the case, then
$(\parf^{[k]})^{\lceil n/k\rceil}(v)\in \Troot(\Tree^k)$ and
the tree of $v$ in $\Tree^k$ contains a root.

If every tree in $\Tree^k$ has a root, then for every $v\in V$ there exists
$n_v\in\N$ such that $(\parf^{[k]})^{n_v}(v)\in\Troot(\Tree^k)$.
This means that $\parf^{kn_v+k-1}(v)\in\Troot(\Tree)$ (cf.\ \LEM{tree-pow-def} (\ref{tp:root0})).

(\ref{tp:infinite}) First we want to prove that there exists a function $h:V\to\mathbb{Z}$
such that
\begin{equation}\label{eq:h}
h(v) = h(\parf(v))+1 \text{ for any } v\in V^\circ.
\end{equation}
The function can be given independently on every tree in the forest $\Tree$ -- the above
condition does not involve vertices from different trees.

Fix any vertex $v_0\in V$. Using \eqref{eq:tree} and \eqref{eq:Des} we can define
\begin{align}
\label{eq:h0} h_0(v) &:= \min\bigl\{n\in\ZP : v\in\Des(\parf^n(v_0))\bigr\},\\
\label{eq:h1} h(v)   &:= \min\{n\in\ZP: \parf^n(v) = \parf^{h_0(v)}(v_0) \} -h_0(v),
\end{align}
for $v\in [v_0]$. We have to fix single representatives for each tree in $\Tree$
to obtain $h$ on the whole $V$.

To prove \eqref{eq:h} fix $v\in [v_0]\setminus\Troot(\Tree)$.
First we want to describe the possible values of $h_0(v)-h_0(\parf(v))$.
Clearly, $v\in\Des(\parf(v))$ and according to \LEM{tree-basics}~(\ref{tbs:Des-Des})
we can deduce that $h_0(v)\geq h_0(\parf(v))$.
By \eqref{eq:Des}, there exists $k\in\ZP$ such that $\parf^k(v)=\parf^{h_0(v)}(v_0)$.
If $k\geq 1$, then $\parf^{k-1}(\parf(v))=\parf^{h_0(v)}(v_0)$ and consequently,
$\parf(v)\in\Des(\parf^n(v_0))$, hence $h_0(\parf(v))\leq h_0(v)$.
Thus assuming $h_0(\parf(v))\neq h_0(v)$ leads to the conclusion that
$v=\parf^{h_0(v)}(v_0)$. It is straightforward that in such case $h_0(\parf(v))\leq h_0(v)+1$.
Since $h_0(v)\geq h_0(\parf(v))$ and the strict inequality is assumed, we obtain
that $h_0(\parf(v))= h_0(v)+1$ and hence $\parf(v)=\parf^{h_0(\parf(v))}(v_0)$.

Now we are ready to discuss the behaviour of $h$.
We start with the case when $h_0(v)=h_0(\parf(v))$.
Under this assumption, from \eqref{eq:h1} and the fact that
$\parf^{n+1}(v) = \parf^n(\parf(v))$ for $n\in\ZP$, we obtain $h(v)\leq h(\parf(v))+1$.
By the properties of the parent function we cannot have $\parf^{n-j}(v) = \parf^{n+1}(v)$
for $n\geq j\geq 0$ unless $v$ is a root. This gives $h(v)> h(\parf(v))$
and concludes the proof of \eqref{eq:h} in the case when $h_0(v)=h_0(\parf(v))$.

As we have already shown, assuming $h_0(\parf(v))\neq h_0(v)$ implies
that $v=\parf^{h_0(v)}(v_0)$,
$\parf(v)=\parf^{h_0(\parf(v))}(v_0)$ and $h_0(\parf(v))= h_0(v)+1$.
This, according to \eqref{eq:h1}, gives $h(v)=-h_0(v)$, $h(\parf(v))=-h_0(\parf(v))$,
and consequently \eqref{eq:h} in the remaining case.

Given a function $h$ satisfying \eqref{eq:h} we can define the following
sets $V_j:=\{v\in V: h(v)\equiv j\pmod{k}\}$ for $j=0,\ldots,k-1$.
From the formula \eqref{eq:h} we can derive $h(\parf^k(v)) +k =h(v)$
as long as $\parf^{k-1}(v)$ is not a root. 
Hence, we can deduce that $h(\parf^{[k]}(v))\equiv h(v) \pmod{k}$ for any $v\in V$.
Consequently, every tree in $\Tree^k$ is contained in one of the disjoint sets $V_j$.
If neither of the sets was empty, the number of trees will be at least $k$.

Now let us consider whether there exists a vertex $w\in V$ such that 
$\{\parf^\iota(w): \iota=0,\ldots,k-1\} \subseteq V^\circ$.
If such vertex cannot be found, then $\parf^{k-1}(v)\in\Troot(\Tree)$
for every $v\in V$, but this leads to the conclusion that $\Troot(\Tree^k)=V$,
and hence $\Tree^k$ consists of infinitely many (degenerate) trees. 

Now assume that such $w\in V$ as above exists. By \eqref{eq:h} we see
that each $\parf^\iota(w)$ belongs to a different set from the collection
$\{V_j\}_{j=0}^{k-1}$, hence there are at least $k$ separate trees in
$\Tree^k$ and (\ref{tp:infinite}) is proven.

(\ref{tp:rootless}) If the given directed tree is rootless, we observe that
\[ h(v)=h(\parf^{[k]}(v))+k,\qquad v\in V. \]
For $j=0,\ldots,k-1$ and any $u,v\in V_j$ there exist integers $m,n\geq 0$
such that $\parf^m(u)=\parf^n(v)$. From \eqref{eq:h} we deduce that
$h(u)-m=h(v)-n$, and consequently $m\equiv n \pmod{k}$. We can increase them both,
if needed, to obtain that $k|m$ and $k|n$. Then
\[ (\parf^{[k]})^{m/k}(u)=(\parf^{[k]})^{n/k}(v), \]
what proves that $V_j$ is indeed a single tree in the forest $\Tree^k$,
and hence (\ref{tp:rootless}) holds.

(\ref{tp:direct-sum}) can be readily verified from the definitions and we skip the proof.
\end{proof}

\begin{Rem}
The implication (\ref{tp:leafless}) cannot be reversed. To have a counterexample
consider any directed forest with only $k$ vertices. Then each vertex of its $k$-th power has to be a root
and hence cannot be a leaf.
\end{Rem}

The number of trees in the $k$-th power of a rooted directed tree cannot be (easily)
expressed without additional assumptions. Below, as an example, we resolve one
natural case.

\begin{Cor}Let $\Tree$ be a rooted directed tree. If the degree of each vertex 
is not greater than $N\geq 2$, then $\Tree^k$ consists of at most $\frac{N^k-1}{N-1}$ trees.

If all degrees are bounded by $N=1$, the number of trees in $\Tree^k$ is not greater than $k$.
\end{Cor}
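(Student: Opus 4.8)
The plan is to reduce the count of trees to a count of roots, and then to count the roots explicitly using the description of $\Troot(\Tree^k)$ already established. First I would note that a rooted directed tree is in particular fully rooted, so by \LEM{tree-power}~(\ref{tp:allroots}) the power $\Tree^k$ is fully rooted as well. Every tree contains at most one root by \LEM{tree}~(c), and being fully rooted it contains at least one; hence each tree in $\Tree^k$ contains exactly one root, and the number of trees in $\Tree^k$ equals the cardinality of $\Troot(\Tree^k)$.

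Next I would compute this cardinality. Writing $\omega$ for the root of $\Tree$, we have $\Troot(\Tree)=\{\omega\}$, so \LEM{tree-pow-def}~(\ref{tp:roots}) collapses to
\[ \Troot(\Tree^k)=\bigcup_{j=0}^{k-1}\Chif[j](\omega). \]
By \LEM{tree-basics}~(\ref{tbs:child-sep}) the sets $\Chif[j](\omega)$ are pairwise disjoint, whence
\[ \#\Troot(\Tree^k)=\sum_{j=0}^{k-1}\#\Chif[j](\omega). \]

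It then remains to bound the sizes of the sets of $j$-th children. Using \LEM{tree-basics}~\eqref{eq:dzieci} in the form $\Chif[j](v)=\bigsqcup_{u\in\Chif(v)}\Chif[j-1](u)$ together with the degree bound $\deg(v)\leq N$, a straightforward induction on $j$ (stated uniformly over the base vertex, with base case $\#\Chif[0](v)=1$) yields $\#\Chif[j](\omega)\leq N^j$. Summing the resulting geometric series gives $\#\Troot(\Tree^k)\leq\sum_{j=0}^{k-1}N^j=\frac{N^k-1}{N-1}$ when $N\geq 2$, and $\#\Troot(\Tree^k)\leq\sum_{j=0}^{k-1}1=k$ when $N=1$, which is exactly the assertion. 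None of these steps is genuinely difficult; the only point requiring care is the opening reduction of the number of trees to the number of roots, which hinges on $\Tree^k$ being fully rooted, everything else being the disjoint decomposition of the roots and an elementary count.
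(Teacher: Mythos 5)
Your proposal is correct and follows essentially the same route as the paper: reduce to counting roots of $\Tree^k$ via \LEM{tree-power}~(\ref{tp:allroots}), describe $\Troot(\Tree^k)$ using \LEM{tree-pow-def}~(\ref{tp:roots}), bound $\#\Chif[j](\omega)\leq N^j$ by induction on $j$ using \LEM{tree-basics}~\eqref{eq:dzieci}, and sum the geometric series. (Only a cosmetic remark: the disjointness of the sets $\Chif[j](\omega)$ across different levels $j$ is not quite what \LEM{tree-basics}~(\ref{tbs:child-sep}) states, but since an upper bound suffices, the crude estimate $\#\bigcup_j\Chif[j](\omega)\leq\sum_j\#\Chif[j](\omega)$ makes this immaterial.)
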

\begin{proof}
According to \LEM{tree-power}~(\ref{tp:allroots}), it suffices to estimate
the number of roots in $\Tree^k$. 
A simple inductive argument proves that if $|\Chif(v)|\leq N$ for any $v\in V$,
then $|\Chif[j](v)|\leq N^j$ for all $v\in V$ and $j\in\ZP$ (cf.\ \LEM{tree-basics}~\eqref{eq:dzieci}).
This, combined with \LEM{tree-pow-def}~(\ref{tp:roots}) and the identity
\[ 1+N+N^2+\ldots+N^{k-1} = \frac{N^k-1}{N-1} \text{ for }N\geq 2,\]
implies that $\Tree^k$ consists of at most $\frac{N^k-1}{N-1}$ trees.

For $N=1$ we clearly obtain $k$.
\end{proof}

\section{Weighted shift operators} 
Given a~directed forest $\Tree=(V,\parf)$ we consider Hilbert space
$\eld{V}$ of all square summable complex functions on $V$ with the standard inner product
\[ \scp{f,g}:=\sum_{v\in V} f(v)\overline{g(v)},\quad f,g\in\eld{V}.\]
By $\{e_v:v\in V\}$ we denote the canonical orthonormal basis of $\eld{V}$
consisting of functions such that $e_v\rest{V\setminus\{v\}}\equiv 0$ and $e_v(v)=1$.

For $W\subseteq V$ we will identify $\eld{W}$ with a closed subspace of $\eld{V}$
obtained by taking extensions of elements from $\eld{W}$ by $0$ on $V\setminus W$.
In particular $\eld{\emptyset}=\{ 0 \}$.
With this convention we have $\eld{W_1} \perp \eld{W_2}$, provided that $W_1,W_2\subseteq V$
and $W_1\cap W_2=\emptyset$.

\begin{Def}[Weighted shift]\label{def:shift-op}
For a~directed forest $\Tree=(V,\parf)$ and a~set
of complex weights $\blambda=\{\lambda_v\}_{v\in V}$ satisfying
$\Troot(\Tree)\subseteq \{v\in V:\lambda_v=0\}$
we define the operator $S_\blambda$ in $\eld{V}$ called the 
\emph{weighted shift on $\Tree$ with weights $\blambda$} as follows:
\[ \dom(S_\blambda):=\{f\in \ell^2(V): \tilde S(f) \in \eld{V}\}, \]
\[ S_\blambda (f):= \tilde S(f),\quad f\in \dom(S_\blambda),\]
where $\tilde S:\C^V\to \C^V$ is defined by $\tilde S(f)(v):= \lambda_v f(\parf(v))$.

We say a weighted shift $S_\blambda$ is \emph{proper} if $\{v\in V:\lambda_v=0\}= \Troot(\Tree)$.
\end{Def}
The above definition of weighted shift operator is almost the same as
\cite[Definition 3.1.1]{szifty}. For a directed tree both definitions of $S_\blambda$
agree up to the requirement of specifying the zero weight for the root.

Later in this paper the statement ``$\blambda$ is a system of weights on $\Tree$'' includes
the assumption that $\lambda_\omega=0$ for $\omega\in\Troot(\Tree)$.

A weight can be understood as being assigned to an edge joining a vertex with its parent.
As stated in the equality \eqref{eq:naBazowym}, the operator $S_\blambda$
\emph{shifts} a value at given vertex through all outgoing edges.

In view of \DEFN{shift-op}, a classical unilateral weighted shift on $\eld{\ZP}$
is a weighted shift on the directed tree $\Tree=(\ZP,\parf)$,
where $\parf(0)=0$ and $\parf(n+1)=n$ for $n\geq 0$.
For bilateral weighted shift (acting on $\eld{\mathbb Z}$) we use the tree
$\Tree=(\mathbb Z,\parf)$ with $\parf(n):=n-1$ for $n\in\mathbb Z$.
In \DEFN{forkless-tree} we introduce the name ``linear tree'' for the latter.

The only weighted shift on a degenerate directed forest is the zero operator.

A benefit from considering disconnected forests is that we can always ``remove the
edges with zero weights'' still remaining in the same class. The following proposition
shows that every weighted shift on a directed forest is actually a proper weighted shift on 
a possibly different directed forest.

\begin{pro}{proper}
Let $\blambda$ be a system of weights on a directed forest $\Tree=(V,\parf)$.
Then $\Tree_\blambda = (V,\parf_\blambda)$, where
\[ \parf_\blambda(v):=\begin{cases} \parf(v)& \text{if } \lambda_v\neq 0\\
v &\text{whenever }\lambda_v = 0\end{cases},\qquad v\in V, \]
is a directed forest on which the weighted shift with weights $\blambda$ is proper
and, as an operator acting on $\eld{V}$, equals the weighted shift on $\Tree$
with the same weights.
\end{pro}
\begin{proof}
Using \LEM{forestmaking} one can easily check that $\Tree_\blambda$
is a well defined directed forest. Since $S_\blambda$ is a weighted shift,
we have $\lambda_v=0$ whenever $\parf(v)=v=\parf_\blambda(v)$. On the other hand
$\parf_\blambda(v)=v$ provided that $\lambda_v=0$. This means that $\blambda$ is a proper
system of weights on $\Tree_\blambda$.

Observe, that by the very definition of $S_\blambda$ neither the domain nor the values
of operator are affected by replacing $\parf$ by $\parf_\blambda$. In both cases we
obtain the same function $\tilde{S}$.
\end{proof}

The above proposition has no analogue in the class of weighted shifts on directed trees. Removing edges from a directed tree inevitably
disconnects it.

We skip the proofs of the following simple facts. For (a), (b) and (c) see
Propositions 3.1.3 and 3.1.8 in \cite{szifty}.
The fact that we consider forests instead of trees does not affect the elementary arguments.
\begin{pro}{shift-basic}
Let $S_\blambda$ be a weighted shift on a directed forest $\Tree=(V,\parf)$. Then
\begin{abece}
\item if $v\in V$ and $e_v\in\dom(S_\blambda)$, then
\begin{equation}\label{eq:naBazowym}
S_\blambda e_v = \sum_{u\in\Chif(v)} \lambda_u e_u,
\end{equation}
\item\label{shbs:dense} $S_\blambda$ is densely defined $($i.e.\ $\overline{\dom(S_\blambda)}=\eld{V})$
if and only if
\[ \sum_{u\in\Chif(v)} |\lambda_u|^2 <\infty \text{ for every } v\in V; \]
in particular, if every vertex of $\Tree$ has finitely many children
then every weighted shift on $\Tree$ is densely defined,
\item\label{shbs:bounded}  $S_\blambda$ is a bounded operator on $\eld{V}$ if and only if
\[ \sup\biggl\{\sum_{u\in\Chif(v)} |\lambda_u|^2 : v\in V \biggr\} < \infty; \]
moreover, if $S_\blambda \in \BO{\eld{V}}$, then
\[\|S_\blambda\|=\sup\{\|S_\blambda e_v\|: v\in V\}
=\sup\biggl\{ \sum_{u\in\Chif(v)} |\lambda_u|^2 : v\in V \biggr\},\]
\item\label{shbs:countable} if $S_\blambda$ is proper and densely defined,
then $\Tree$ is locally countable,
\item\label{shbs:Des-inv} the subspace $\eld{\Des(v)}\leq \eld{V}$ is invariant for $S_\blambda$
for any $v\in V$.
\end{abece}
\end{pro}

These are not all the properties that extend from weighted shifts on directed trees
onto the weighted shifts on directed forests. For example the description of
polar decomposition \cite[Proposition 3.5.1]{szifty} remains valid.

One of the advantages we take from considering directed forests instead of directed trees
is that the class of weighted shifts is closed with respect to orthogonal sums.
\begin{Pro}
Let $J$ be an arbitrary set of indices and for each $j\in J$
let $S_{\blambda_j}$ be the weighted shift on a directed forest $\Tree_j=(V_j,\parf_j)$
with weights $\blambda_j$.
Assume the directed forests $\{\Tree_j\}_{j\in J}$ are pairwise disjoint. 
Then the operator $\bigoplus_{j\in J} S_{\blambda_j}$ on $\bigoplus_{j\in J} \eld{V_j}$
is equal to the weighted shift on the directed forest $\bigoplus_{j\in J}\Tree_j$ with
weights $\blambda=\bigcup_{j\in J} \blambda_j$.
%
\end{Pro}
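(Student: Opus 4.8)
The plan is to unwind both sides to the underlying ``formal'' operator $\tilde S$ of \DEFN{shift-op} and to exploit that the parent function of the direct sum never mixes the blocks. Write $V:=\bigsqcup_{j\in J}V_j$ and $\parf:=\bigcup_{j\in J}\parf_j$, so that $\bigoplus_{j\in J}\Tree_j=(V,\parf)$, and set $\blambda=\{\lambda_v\}_{v\in V}$ with $\lambda_v$ taken from $\blambda_j$ for $v\in V_j$. As a preliminary I would check that $\blambda$ really is a system of weights on $(V,\parf)$: since the $V_j$ are disjoint, for $v\in V_j$ one has $\parf(v)=\parf_j(v)$, hence $\Troot\bigl(\bigoplus_j\Tree_j\bigr)=\bigsqcup_j\Troot(\Tree_j)$, and as each $\blambda_j$ vanishes on $\Troot(\Tree_j)$ so does $\blambda$ on $\Troot\bigl(\bigoplus_j\Tree_j\bigr)$. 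Thus the weighted shift $S_\blambda$ on the right-hand side is well defined.

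Next I would fix the standard identification $\bigoplus_{j\in J}\eld{V_j}\cong\eld{V}$ under which $f\in\eld{V}$ corresponds to the tuple $\bigl(f\rest{V_j}\bigr)_{j\in J}$; this is an isometry because the $V_j$ are pairwise disjoint, so $\|f\|^2=\sum_j\|f\rest{V_j}\|^2$. The key structural observation is that $\parf$ leaves every block invariant, i.e.\ $\parf(v)=\parf_j(v)\in V_j$ for $v\in V_j$. Consequently the formal operator $\tilde S\colon\C^V\to\C^V$ attached to $\blambda$ satisfies, for $v\in V_j$,
\[ \tilde S(f)(v)=\lambda_v f(\parf(v))=\lambda_v f(\parf_j(v))=\tilde S_j\bigl(f\rest{V_j}\bigr)(v), \]
where $\tilde S_j$ denotes the formal operator attached to $\blambda_j$. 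In other words $\tilde S(f)\rest{V_j}=\tilde S_j\bigl(f\rest{V_j}\bigr)$, so $\tilde S$ is ``block diagonal'' with respect to $V=\bigsqcup_j V_j$.

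With this decomposition the two operators are compared directly. For the action, whenever $f\in\dom(S_\blambda)$ the identity above gives $(S_\blambda f)\rest{V_j}=\tilde S_j\bigl(f\rest{V_j}\bigr)=S_{\blambda_j}\bigl(f\rest{V_j}\bigr)$ for every $j$, which is exactly the value of $\bigl(\bigoplus_j S_{\blambda_j}\bigr)f$ under the identification. For the domains I would compute
\[ \sum_{v\in V}\bigl|\tilde S(f)(v)\bigr|^2=\sum_{j\in J}\ \sum_{v\in V_j}\bigl|\tilde S_j(f\rest{V_j})(v)\bigr|^2 \]
and observe that, all terms being non-negative, the left-hand side is finite precisely when each inner sum is finite (that is, $f\rest{V_j}\in\dom(S_{\blambda_j})$, with inner value $\|S_{\blambda_j}(f\rest{V_j})\|^2$) and the resulting series $\sum_j\|S_{\blambda_j}(f\rest{V_j})\|^2$ converges. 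This is verbatim the definition of the domain of the orthogonal sum $\bigoplus_j S_{\blambda_j}$, so the two domains coincide and the identification of operators is complete.

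The only genuinely delicate point is the bookkeeping for the domain of the possibly unbounded orthogonal sum: one must separate the condition ``$\tilde S_j(f\rest{V_j})\in\eld{V_j}$ for each $j$'' from the summability condition ``$\sum_j\|S_{\blambda_j}(f\rest{V_j})\|^2<\infty$'', and note that finiteness of the single double sum encodes both at once. Everything else is the routine verification that $\parf$ respects the partition of $V$ into the $V_j$, which makes $\tilde S$ block diagonal.
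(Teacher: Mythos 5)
Your argument is correct and is precisely the definitional unwinding that the paper has in mind: the paper omits the proof entirely, stating only that the proposition ``is a simple consequence of the definitions of orthogonal sum and weighted shift on directed forest.'' Your careful treatment of the domain of the unbounded orthogonal sum (separating membership of each $f\rest{V_j}$ in $\dom(S_{\blambda_j})$ from the summability of $\sum_j\|S_{\blambda_j}(f\rest{V_j})\|^2$, both encoded in one double sum of non-negative terms) is exactly the point worth writing down, and it is handled correctly.
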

The above fact is a simple consequence of the definitions of orthogonal sum
and weighted shift on directed forest.

The other way round, a weighted shift on a directed forest can be naturally
decomposed into weighted shifts on directed trees.
\begin{lem}{tree-reducing}
If $S_\blambda$ is a densely defined weighted shift on a directed forest $\Tree=(V,\parf)$,
then for any tree $W$ in the forest $\Tree$, the subspace $\eld{W}\subspace \eld{V}$
is reducing for $S_\blambda$ $($i.e.\ invariant for both $S_\blambda$ and $S_\blambda^\ast)$.

In particular
\[ S_\blambda = \bigoplus_{T\in \mathfrak{T}} S_\blambda\rest{\eld{T}},\]
where $\mathfrak{T}=\{[v]:v\in V\}$ stands for the set of all trees in $\Tree$.
\end{lem}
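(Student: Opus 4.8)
The plan is to work throughout with the orthogonal projection $P_W\in\BO{\eld{V}}$ onto $\eld{W}$, which acts by restriction to $W$ followed by extension by zero; concretely $(P_W f)(u)=\chi_W(u)\,f(u)$, where $\chi_W$ is the characteristic function of $W$. Since reducibility of the closed subspace $\eld{W}$ is equivalent to $P_W$ commuting with $S_\blambda$ (i.e.\ $P_W$ maps $\dom(S_\blambda)$ into itself and $S_\blambda P_W f=P_W S_\blambda f$ for every $f\in\dom(S_\blambda)$), I would establish exactly this commutation, after which invariance of $\eld{W}$ under both $S_\blambda$ and $S_\blambda^\ast$ follows from the standard theory of reducing subspaces.

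The crucial combinatorial observation is that a tree is closed under passing both to parents and to children: for every $u\in V$ one has $u\sim\parf(u)$ (trivially when $u$ is a root, and by taking $n=1$, $m=0$ in the definition of $\sim$ when $u\in V^\circ$), so, $W$ being an equivalence class, $u\in W\iff\parf(u)\in W$, that is $\chi_W(\parf(u))=\chi_W(u)$ for all $u\in V$. First I would exploit this at the level of the formal operator $\tilde S\colon\C^V\to\C^V$, $\tilde S(f)(u)=\lambda_u f(\parf(u))$: for any $f\in\C^V$ and any $u\in V$,
\[ \tilde S(P_W f)(u)=\lambda_u (P_W f)(\parf(u))=\lambda_u\,\chi_W(\parf(u))\,f(\parf(u))=\chi_W(u)\,\lambda_u\,f(\parf(u))=(P_W\tilde S f)(u), \]
so that $\tilde S P_W=P_W\tilde S$ as operators on all of $\C^V$.

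From this identity the operator-level statements become immediate, which is the point of passing through $\tilde S$. If $f\in\dom(S_\blambda)$ then $\tilde S f\in\eld{V}$, hence $\tilde S(P_W f)=P_W\tilde S f\in\eld{V}$ because $P_W$ is bounded; thus $P_W f\in\dom(S_\blambda)$ and $S_\blambda P_W f=\tilde S(P_W f)=P_W\tilde S f=P_W S_\blambda f$. This is precisely the reducing condition. In particular $\eld{W}$ is invariant for $S_\blambda$ (for $f\in\dom(S_\blambda)\cap\eld{W}$ one has $P_W f=f$, so $S_\blambda f=P_W S_\blambda f\in\eld{W}$), and since $P_W$ commutes with $S_\blambda$ it commutes with $S_\blambda^\ast$ as well, yielding invariance for the adjoint; alternatively one verifies directly that $S_\blambda^\ast e_u=\overline{\lambda_u}\,e_{\parf(u)}$ keeps each basis vector within its own tree.

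For the ``in particular'' decomposition I would note that the trees partition $V$, so $\eld{V}=\bigoplus_{T\in\mathfrak{T}}\eld{T}$ orthogonally, and each summand reduces $S_\blambda$ by the first part. The domain of $\bigoplus_{T}S_\blambda\rest{\eld{T}}$ consists of those $f$ with $P_T f\in\dom(S_\blambda\rest{\eld{T}})$ for all $T$ and $\sum_{T}\|S_\blambda P_T f\|^2<\infty$; since $\tilde S$ acts tree-by-tree, this summability condition reads $\sum_{T}\|\tilde S(P_T f)\|^2=\|\tilde S f\|^2<\infty$ and so recovers exactly $\dom(S_\blambda)$, while the actions agree, giving the stated equality. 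The only genuine care required throughout is the unbounded-operator bookkeeping, i.e.\ verifying domain preservation rather than arguing merely on the dense span of basis vectors; the commutation $\tilde S P_W=P_W\tilde S$ at the level of $\C^V$ is what renders this bookkeeping routine, so I anticipate no real obstacle beyond stating it carefully.
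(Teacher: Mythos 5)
Your proof is correct, but it takes a different route from the paper. The paper argues on basis vectors: for $v\in W$ one has $S_\blambda e_v\in\lin\{e_u:\parf(u)=v\}\subseteq\eld{W}$, hence $\eld{W}=\lincl\{e_v:v\in W\}$ is invariant; since $\eld{V}\ominus\eld{W}=\eld{V\setminus W}$ is the union of the remaining trees, the orthogonal complement is invariant by the same token, and invariance of both summands is then read as reducibility. You instead prove the commutation $\tilde S P_W=P_W\tilde S$ on all of $\C^V$ from the single combinatorial fact $\chi_W\circ\parf=\chi_W$, and deduce $P_W\dom(S_\blambda)\subseteq\dom(S_\blambda)$ together with $S_\blambda P_W=P_W S_\blambda$ on $\dom(S_\blambda)$. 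For a merely densely defined (possibly unbounded) operator this is the sharper formulation: the paper's step from ``$S_\blambda$ maps the basis vectors of $\eld{W}$ into $\eld{W}$'' to ``$\eld{W}$ is invariant'', and its final appeal to ``both subspaces invariant implies reducing'', both implicitly require exactly the domain preservation $P_W\dom(S_\blambda)\subseteq\dom(S_\blambda)$ that your commutation identity delivers for free. So your argument buys rigour in the unbounded bookkeeping (and a cleaner derivation of the orthogonal-sum decomposition of the domain), at the cost of being slightly less immediate than the paper's one-line basis-vector computation; both ultimately rest on the same observation that each tree is closed under passing to parents and to children.
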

\begin{proof}
Let $W\subseteq V$ be a tree in the forest $\Tree$. It is clear by definition
(or \eqref{eq:naBazowym}) that
\[S_\blambda e_v \in \lin\{e_u: u\in V,\ \parf(u)=v\} \subspace \eld{W}
\text{ for any } v\in W,\]
and hence $\eld{W}=\lincl\{e_v: v\in W\}$ is invariant for $S_\blambda$.

Since $\eld{V}\ominus \eld{W} = \eld{V\setminus W}$ and $V\setminus W$ is a union
of all trees in $\Tree$ except $W$, the subspace orthogonal to $\eld{W}$ is also
invariant for $S_\blambda$.
As both $W$ and $W^\bot$ are invariant, $W$ is reducing for $S_\blambda$.
\end{proof}

To describe powers of a~weighted shift we introduce the following notation
\[\lambda^{(0)}_v:=1,\ 
\lambda^{(k)}_v:=\lambda^{(k-1)}_{\parf(v)}\lambda_v
\quad\text{for } v\in V \text{ and } k\geq 1.\]

The reader is referred to \DEFN{tree-power} for the notion of $k$-th power of a directed forest.

\begin{lem}{powers} Let $S_\blambda$ be a bounded weighted shift
on a directed forest $\Tree=(V,\parf)$ and $k\in\N$. Then for $f\in\eld{V}$,
\begin{eqnarray}\setlength\arraycolsep{0pt}
\label{eq:Sdok} S_\blambda^k f &=&
\sum_{v\in V} f(v)\sum_{u\in\Chif[k](v)}\lambda^{(k)}_u e_u,\\
\label{eq:S*dok} S_\blambda^{\ast k} f &=&
\sum_{v\in V} f(v)\overline{\lambda^{(k)}_v} e_{\parf^k(v)}
=\sum_{v\in V} \Bigl(\sum_{u\in\Chif[k](v)} f(u)\overline{\lambda^{(k)}_u}\Bigr) e_v.
\end{eqnarray}
Moreover, the operator $S_\blambda^k$ is a weighted shift on the $k$-th power of
the forest $\Tree$ with weights $\blambda^{(k)}=\{\lambda_v^{(k)}\}_{v\in V}$.
The weighted shift $S_{\blambda^{(k)}}$ is proper on $\Tree^k$
provided that $S_\blambda$ is proper on $\Tree$.
\end{lem}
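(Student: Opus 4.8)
The plan is to prove the three assertions in \LEM{powers} in sequence: first establish the formula \eqref{eq:Sdok} for $S_\blambda^k f$, then derive \eqref{eq:S*dok} for the adjoint powers, and finally identify $S_\blambda^k$ as the weighted shift $S_{\blambda^{(k)}}$ on $\Tree^k$. The natural engine throughout is induction on $k$, with the base case $k=1$ being essentially \eqref{eq:naBazowym} together with the definitions $\lambda_v^{(1)}=\lambda_v$ and $\Tree^1=\Tree$ (the latter from \LEM{tree-pow-def}~(\ref{tp:identity})).

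First I would prove \eqref{eq:Sdok}. Since $S_\blambda$ is bounded and the $e_v$ form an orthonormal basis, it suffices by continuity and linearity to verify the formula on basis vectors, i.e.\ to show $S_\blambda^k e_v = \sum_{u\in\Chif[k](v)}\lambda_u^{(k)} e_u$. Assuming this for $k-1$, I would apply $S_\blambda$ and use \eqref{eq:naBazowym} to expand $S_\blambda e_u = \sum_{w\in\Chif(u)}\lambda_w e_w$ for each $u\in\Chif[k-1](v)$; the decomposition $\Chif[k](v)=\bigsqcup_{u\in\Chif[k-1](v)}\Chif(u)$ from \LEM{tree-basics}~\eqref{eq:dzieci} reindexes the double sum as a single sum over $\Chif[k](v)$, and the recursion $\lambda_w^{(k)}=\lambda_{\parf(w)}^{(k-1)}\lambda_w=\lambda_u^{(k-1)}\lambda_w$ (valid because $\parf(w)=u$ for $w\in\Chif(u)$) produces the correct coefficients. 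The disjointness in that union is what guarantees no term is counted twice, so the bookkeeping is clean.

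Next I would turn to \eqref{eq:S*dok}. The cleanest route is to compute $S_\blambda^{\ast k}$ directly as the Hilbert-space adjoint of the operator just described: for the first equality I test $\scp{S_\blambda^k e_v, e_u}$ against the claimed expression and read off $S_\blambda^{\ast k}e_v = \overline{\lambda_v^{(k)}}\,e_{\parf^k(v)}$, using that $\parf^k$ maps each $v$ to the unique vertex of which it is a $k$-th child (or to $v$ itself when no such ancestor exists, in which case $\lambda_v^{(k)}=0$ because some factor $\lambda_{\parf^j(v)}$ vanishes at a root). The second equality in \eqref{eq:S*dok} is then a regrouping: collecting the contributions $e_v$ receives from all $u$ with $\parf^k(u)=v$, which is exactly the sum over $\Chif[k](v)$.

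Finally, for the identification of $S_\blambda^k$ as a weighted shift on $\Tree^k$, I would compare \eqref{eq:Sdok} with the defining action \eqref{eq:naBazowym} of a weighted shift on $\Tree^k$, invoking \LEM{tree-pow-def}~(\ref{tp:child}) that $\Chif_{\Tree^k}(v)=\Chif[k]_\Tree(v)$: this shows $S_\blambda^k$ shifts exactly along the edges of $\Tree^k$ with weights $\lambda_u^{(k)}$, so it \emph{is} $S_{\blambda^{(k)}}$. The one genuine subtlety, and the place I expect the main obstacle, is the well-definedness of these weights as a legitimate system on $\Tree^k$: I must check $\lambda_v^{(k)}=0$ for every root $v$ of $\Tree^k$. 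By \LEM{tree-pow-def}~(\ref{tp:root0}) such a root satisfies $\parf^{k-1}(v)\in\Troot(\Tree)$, so the factor $\lambda_{\parf^{k-1}(v)}$ in the product $\lambda_v^{(k)}=\lambda_v\lambda_{\parf(v)}\cdots\lambda_{\parf^{k-1}(v)}$ vanishes, forcing $\lambda_v^{(k)}=0$; for the properness claim, if $S_\blambda$ is proper then $\lambda_u\neq 0$ for every non-root $u$, and I would argue conversely that $\lambda_v^{(k)}\neq 0$ precisely when $v$ is a non-root of $\Tree^k$, since then none of $v,\parf(v),\dots,\parf^{k-1}(v)$ is a root of $\Tree$ and all factors are nonzero.
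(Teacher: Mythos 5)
Your proposal is correct and follows essentially the same route as the paper: induction on $k$ via \eqref{eq:naBazowym} and the disjoint decomposition in \LEM{tree-basics}~\eqref{eq:dzieci} for \eqref{eq:Sdok}, the inner-product computation of the adjoint for \eqref{eq:S*dok}, and \LEM{tree-pow-def} (parts (\ref{tp:child}) and (\ref{tp:root0})) for the identification with $S_{\blambda^{(k)}}$ on $\Tree^k$ and the root/properness checks. The only cosmetic difference is that the paper computes $S_\blambda^\ast e_u$ once and iterates, whereas you take the adjoint of $S_\blambda^k$ directly; both yield the same formula, and your explicit handling of the degenerate case where $\lambda_v^{(k)}=0$ is a welcome detail the paper leaves implicit.
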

\begin{proof}
By boundedness we have \eqref{eq:naBazowym} for any $v\in V$.
Using \LEM{tree-basics}~\eqref{eq:dzieci}, we can easily prove by induction that
\[ S_\blambda^k e_v = \sum_{u\in\Chif[k](v)}\lambda^{(k)}_u e_u,\quad v\in V, \]
which leads to \eqref{eq:Sdok} (cf.\ \cite[Lemma 6.1.1.]{szifty}).

Since $\{e_v\}_{v\in V}$ is an orthonormal basis we know that for $u,v\in V$,
\[ \scp{S_\blambda^\ast e_u, e_v} = \overline{\scp{S_\blambda e_v, e_u}} =
\overline{(S_\blambda e_v)(u)}=
\begin{cases}\overline{\lambda_u} & \text{if } v=\parf(u) \\ 0 &\text{otherwise}\end{cases}. \]
This leads to the conclusion that $S_\blambda^\ast e_u = \overline{\lambda_u} e_{\parf(u)}$
for $u\in V$ and by induction we get \eqref{eq:S*dok}.

Observe that if $v\in \Troot(\Tree^k)$ then $\parf^{k-1}(v)\in\Troot(\Tree)$,
hence $\lambda_{\parf^{k-1}(v)}=0$ and by definition $\lambda_v^{(k)}=0$.
This means $\blambda^{(k)}$ is indeed a system of weights on $\Tree^k$.

Knowing that $\Chif_{\Tree^k}(v)=\Chif[k]_\Tree(v)$ (see \LEM{tree-pow-def}) one can 
observe that the formula \eqref{eq:Sdok} for $S_\blambda^k$ 
coincides with the analogous expression for $S^1_{\blambda^{(k)}}$ -- a weighted shift on $\Tree^k$.

The weighted shift $S_{\blambda^{(k)}}$ is proper if, by definition, $\lambda^{(k)}_v\neq 0$
for every $v\in V\setminus \Troot(\Tree^k)$. For a vertex $v\in V$, the 
statement $v\notin \Troot(\Tree^k)$ is equivalent to $\parf^{k-1}(v)\notin \Troot(\Tree)$
(cf.\ \LEM{tree-pow-def}~(\ref{tp:root0})). Then
\[ \lambda^{(k)}_v = \lambda_v \lambda_{\parf(v)}\cdot\ldots\cdot\lambda_{\parf^{k-1}(v)} \]
and none of the factors is zero provided $S_\blambda$ is proper. Hence the proof is complete.
\end{proof}

An interesting general property of weighted shifts on directed forests is that 
we can actually restrict our interest to the shifts with non-negative weights.
Unlike most of the results presented in this paper, this one is valid also for
unbounded weighted shifts.

\begin{thm}{lambdaplus}
Let $\blambda=\{\lambda_v\}_{v\in V}$ be a system of weights. Then the weighted
shift $S_\blambda$ on a directed forest $\Tree=(V,\parf)$ is unitarily equivalent
to the weighted shift $S_{|\blambda|}$ on the same forest,
where $|\blambda|:=\{ |\lambda_v| \}_{v\in V}$.
\end{thm}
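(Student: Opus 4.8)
The plan is to realise the unitary equivalence through a diagonal (gauge) unitary. I look for a unitary $U\in\BO{\eld{V}}$ of the form $Ue_v=c_v e_v$ with $|c_v|=1$ for every $v\in V$, and aim to show $US_\blambda U^{\ast}=S_{|\blambda|}$. Writing out what this requires at the level of the generating map $\tilde S$ from \DEFN{shift-op}, a one-line computation gives $(U\tilde S_\blambda U^{\ast}f)(v)=c_v\,\lambda_v\,\overline{c_{\parf(v)}}\,f(\parf(v))$, so the whole problem reduces to arranging the scalar identity
\[ c_v\,\lambda_v\,\overline{c_{\parf(v)}}=|\lambda_v|,\qquad v\in V^\circ \]
(for $v\in\Troot(\Tree)$ we have $\parf(v)=v$ and $\lambda_v=0$, so both sides vanish). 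Setting $\sigma_v:=\lambda_v/|\lambda_v|$ when $\lambda_v\neq0$ and $\sigma_v:=1$ otherwise — a unimodular number in either case — this is equivalent to the recursion $c_v=\overline{\sigma_v}\,c_{\parf(v)}$ for all $v\in V^\circ$.

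First I would construct the coefficients $c_v$ one tree at a time, using the decomposition of $\Tree$ into its trees. Fix a tree $T$ and a reference vertex $v_T\in T$, and put $c_{v_T}=1$. For an arbitrary $u\in T$ the tree relation supplies integers $m,n\in\ZP$ with $\parf^m(u)=\parf^n(v_T)=:z$, and I set
\[ c_u:=\Bigl(\prod_{j=0}^{m-1}\overline{\sigma_{\parf^j(u)}}\Bigr)\Bigl(\prod_{i=0}^{n-1}\sigma_{\parf^i(v_T)}\Bigr), \]
which amounts to transporting the phase up from $u$ to the common ancestor $z$ and back down to $v_T$. Since every factor is unimodular, $|c_u|=1$. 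This uniform prescription covers rootless trees as well, where there is no canonical base point: one simply fixes an arbitrary $v_T$.

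The only genuinely nontrivial point is that $c_u$ is independent of the chosen common ancestor. The ancestors $\{\parf^k(u):k\in\ZP\}$ of a vertex form a chain under the parent relation, so any two common ancestors of $u$ and $v_T$ are comparable; replacing one by a higher one multiplies the two products by matching conjugate factors $\sigma$ and $\overline{\sigma}$, which cancel. This is precisely where acyclicity of the forest enters, and I expect it to be the main obstacle in writing the argument cleanly. Once well-definedness is granted, the recursion $c_u=\overline{\sigma_u}\,c_{\parf(u)}$ for $u\in V^\circ$ follows by choosing a common ancestor of $\parf(u)$ and $v_T$ (which is then one of $u$ and $v_T$), and this is exactly the relation required above.

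Finally I would assemble $Uf:=(c_v f(v))_{v\in V}$; as $\{e_v\}$ is an orthonormal basis and $|c_v|=1$, this is a unitary with $U^{\ast}f=(\overline{c_v}f(v))_{v\in V}$. To include the unbounded case I avoid evaluating $S_\blambda$ on basis vectors and argue at the level of $\tilde S$: the chosen phases give $U\tilde S_\blambda U^{\ast}=\tilde S_{|\blambda|}$ as maps $\C^V\to\C^V$, whence for any $f\in\eld{V}$,
\[ f\in\dom(S_{|\blambda|})\iff \tilde S_{|\blambda|}f\in\eld{V}\iff \tilde S_\blambda(U^{\ast}f)\in\eld{V}\iff U^{\ast}f\in\dom(S_\blambda). \]
Thus $U$ carries $\dom(S_\blambda)$ onto $\dom(S_{|\blambda|})$ and $S_{|\blambda|}=US_\blambda U^{\ast}$, establishing the claimed unitary equivalence without any boundedness assumption.
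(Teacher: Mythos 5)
Your proof is correct and follows essentially the same route as the paper: a diagonal unitary with unimodular entries, built tree by tree so that $c_v\overline{c_{\parf(v)}}\lambda_v=|\lambda_v|$ on $V^\circ$, combined with the observation that such a unitary does not alter the domain. The only organisational differences are that the paper first passes to a proper shift via Proposition~\ref{pro:proper} and then defines the phases by a recursive sweep of each tree, whereas you keep possibly-zero weights (setting $\sigma_v=1$ there) and give a closed formula through a common ancestor; in your well-definedness check note that a common ancestor does not determine the exponents $(m,n)$ once the ancestor chain reaches a root, but the unmatched extra factors are then $\sigma$ evaluated at a root, which is $1$ under your convention, so the cancellation argument still goes through.
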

The above theorem is a consequence of \LEM{tree-reducing} and
the theorem for directed trees (\cite[Theorem 3.2.1.]{szifty}),
however we provide here a full elementary proof.
\begin{proof}
\newcommand{\bbeta}{{\boldsymbol\beta}}
\newcommand{\units}{\mathbb{T}}\newcommand{\cbar}{\overline}
Denote the set $\{z\in\C:|z|=1\}$ by $\units$.
Given any system $\bbeta=\{\beta_v\}_{v\in V}\in \units^V$ we define an
operator $U_\bbeta$ on $\eld{V}$ by the formula $(U_\bbeta f)(v):=\beta_vf(v)$.
Straight from the definition of the norm in $\eld{V}$, the operator $U_\bbeta$
is an isometry and $U_\bbeta U_{\cbar\bbeta}=I$, where $\cbar\bbeta:=\{\cbar\beta_v\}_{v\in V}$.
Hence, for any $\bbeta\in\units^V$, the operator $U_\bbeta$ is unitary and
$U_\bbeta ^{-1}= U_\bbeta^*=U_{\cbar\bbeta}$.

We want to prove that for $S_\blambda$ there exists system $\bbeta\in \units^V$
such that $S_{|\blambda|}=U_\bbeta S_\blambda U_\bbeta^*$.

By Proposition~\ref{pro:proper} we may and do assume that the weighted shift $S_\blambda$ is proper,
meaning that $\lambda_v\neq 0$ for $v\in V^\circ$.
It does not change the Hilbert space and operators in question.
Observe also that $\Tree_\blambda=\Tree_{|\blambda|}$.

Unwrapping the definition of a weighted shift, we observe that the domain of $S_\blambda$
can be described as follows (cf.\ \cite[Proposition 3.1.3~(i)]{szifty})
\[\dom(S_\blambda)=
\Bigl\{f\in \eld{V}:
\sum_{v\in V}\bigl( |f(v)|^2 \sum_{u\in\Chif(v)}|\lambda_u|^2\bigr) <\infty \Bigr\}.\]
It then becomes clear that $\dom(S_\blambda)=\dom(U_\bbeta S_\blambda U_\bbeta^*)=\dom(S_{|\blambda|})$,
regardless of the choice of $\bbeta\in\units^V$.

Having the equality of domains it remains to verify that
\[ (S_{|\blambda|}f)(v)=(U_\bbeta S_\blambda U_\bbeta^* f)(v),\  
f\in\dom(S_\blambda),\ v\in V, \]
i.e.\ $ |\lambda_v| f(\parf(v)) = \beta_v \lambda_v f(\parf(v))\cbar\beta_{\parf(v)}$.
We will construct $\bbeta$ satisfying a slightly stronger condition, namely
\begin{equation}\label{eq:betas}
|\lambda_v| =  \beta_v \cbar\beta_{\parf(v)}\lambda_v
\end{equation}
for all $v\in V$. For $v\in\Troot(\Tree)$ both sides are clearly zero regardless of $\bbeta$,
hence we shall focus on \eqref{eq:betas} for $v\in V^\circ$.

We start by decomposing the forest into trees $V=\bigsqcup_{j\in J} [v_j]$.
Since each tree has at most one root, the representatives can be chosen in such a way
that $\Troot(\Tree)\subseteq\{v_j\}_{j\in J}$.

Set $\beta_{v_j}:=1$ for every $j\in J$. Given the value $\beta_{\parf(v)}$ ($v\in V^\circ$)
we clearly have to set 
\[ \beta_v:=\beta_{\parf(v)} |\lambda_v| \lambda_v^{-1}. \tag{$*$} \]
Hence we can construct system $\bbeta$ such that \eqref{eq:betas} holds
for all $v\in\Deso(v_j)$ and $j\in J$.

Assuming $v_j\in V^\circ$ and having given $\beta_{\parf^k(v_j)}$
(for some $k\geq 0$), we define 
\[ \beta_{\parf^{k+1}(v_j)}:= \beta_{\parf^k(v_j)} |\lambda_{\parf^k(v_j)}|^{-1}\lambda_{\parf^k(v_j)}.
\quad \tag{$**$} \]
When $\beta_{\parf^k(v_j)}$ ($k\geq 1$) is known, we use ($*$) to define $\beta_v$ for any
$v\in\Deso(\parf^k(v_j))\setminus \Des(\parf^{k-1}(v_j))$.

In the presented construction, for every
$v\in V^\circ$ we consider \eqref{eq:betas} only once.
It is also not possible to
set both $\beta_v$ and $\beta_{\parf(v)}$ before ``applying \eqref{eq:betas} to $v$''.
Indeed, if this was the case, $\beta_v$ must have been set by using ($**$),
which takes place for the ancestors of $v_j$ only (assuming $[v_j]\ni v$).
On the other hand, if $\beta_{\parf(v)}$ was also defined by using ($**$), we have
$\parf(v)=\parf^{k+1}(v_j)$ for some $k\geq 0$ and $v\neq \parf^k(v_j)$, hence
$v$ is not the ancestor of $v_j$, a contradiction.
If $\beta_{\parf(v)}$ was defined by ($*$), then $\beta_{\parf^2(v)}$ had to be known
beforehand, which is not possible for the ancestors of $v_j$.

From a graph-theoretical point of view, the procedure we have described works
because there is always precisely one path in a tree (acyclic graph) joining vertex
``$v$'' with the given starting vertex ``$v_j$''.

Proceeding by the above rules we can reach every vertex $v\in V^\circ$
(cf.\ \LEM{tree}~(\ref{tbs:tree})) and find $\beta_v$ (or $\beta_{\parf(v)}$)
such that \eqref{eq:betas} is satisfied for $v$.
This completes the proof.
\end{proof} 

\subsection{Restrictions and extensions} 
From now on we will always consider bounded weighted shift operators.\label{sec:restrictions}

Given a directed forest $\Tree=(V,\parf)$ and arbitrary complex numbers $\{\lambda_v\}_{v\in \Deso(u)}$
for some $u\in V$, we will use the following notation
\begin{equation}\label{eq:destree}
\blambda^\to_u:=\{\tilde\lambda_v\}_{v\in\Des(u)},\quad
\tilde\lambda_v:=\begin{cases}\lambda_v &\text{if }v\neq u\\ 0& \text{for }v= u\end{cases},\quad v\in \Des(u).
\end{equation}

The $\blambda^\to_u$ defined as above is a system of weights on the directed tree $\Tree_{(u\to)}$.
If $S_\blambda$ is a weighted shift on $\Tree$, then $S_{\blambda^\to_u}$ is
equal to the restriction $S_\blambda\rest{\eld{\Des(u)}}$.

We will consider whether weighted shifts on rooted directed trees can be extended
to shifts on larger trees (for which the former root has a parent) with the same
properties (e.g.\ subnormality). The basic case of such extension is the subject of the
following definition.

\begin{dfn}{backward-ext-shift} Let $S_\blambda\in \BO{\eld{V}}$ be a weighted shift
on a rooted directed tree $\Tree$. Given $k\in\ZP$ we say
$S_\blambda$ \emph{admits subnormal (power hyponormal, etc.) $k$-step backward extension}
if it is subnormal (power hyponormal, etc.) and there exist nonzero weights
$\lambda_{\omega_0}',\ldots,\lambda_{\omega_k-1}'$ such that the weighted shift
$S_{\blambda '}$ on $\Tree\sqlow{k}$ (see \DEFN{backward-ext}), where
$\blambda '=\{\lambda_v'\}_{v\in V\sqlow{k}}$, $\lambda_{\omega_k}=0$ and 
$\lambda_v'=\lambda_v$ for $v\in V^\circ$, is bounded and subnormal (power hyponormal, etc.).
\end{dfn}

It is worth mentioning that if a weighted shift on a directed tree admits a subnormal
(resp.\ power hyponormal) $k$-step backward extension, then any its nonzero scalar
multiple does. Because both the class of subnormal and the class of power hyponormal operators
are closed on taking restrictions to invariant subspaces, every weighted shift on a directed tree admitting
subnormal (resp.\ power hyponormal) $k$-step backward extension admits subnormal (power hyponormal)
$(k-1)$-step backward extension ($k\geq 1$).
Indeed, $\eld{V\sqlow{k-1}}$ is an invariant subspace for a weighted shift on
$\Tree\sqlow{k}$ (cf.\ Proposition~\ref{pro:shift-basic}~(\ref{shbs:Des-inv})).

Note that we can say that $S_\blambda$ ``admits subnormal $0$-step backward extension''
meaning that $S_\blambda$ is subnormal.

\begin{pro}{isometry}
Given a countable rooted leafless directed tree there exists a system of weights $\blambda$
such that the weighted shift $S_\blambda$ is proper bounded and
admits subnormal $($hence power hyponormal$)$ $k$-step backward extension for every $k\in\ZP$.
\end{pro}
\begin{proof} As usual, we denote the tree by $\Tree=(V,\parf)$ and its root by $\omega$.
For every $v\in V$ it suffices to choose nonzero weights $\{\lambda_u\}_{u\in\Chif(v)}$
such that $\sum_{u\in\Chif(v)}|\lambda_u|^2 = 1$. Since by \LEM{tree-basics}~(\ref{tbs:child-sep}) the sets of children are mutually disjoint,
this can be done for each $\Chif(v)$ independently.
We put also $\lambda_\omega:=0$.

One can easily observe (cf.~\eqref{eq:Sdok}) that with the above weights $S_\blambda$
is an isometry and hence it is subnormal. Given $k\geq 0$ we take $\lambda '_{\omega_j}=1$,
for $j=0,\ldots,k-1$ to obtain a system of weights on $\Tree\sqlow{k}$ (see \DEFN{backward-ext}).
Then $S_{\blambda '}$ (cf.\ \DEFN{backward-ext-shift}) is an isometry, by the same argument as for $S_\blambda$,
hence subnormal and the proof is complete.
\end{proof}

\section{Power hyponormality} 

The characterisation of hyponormality of weighted shifts on directed trees
was presented in \cite[Theorem 5.1.2]{szifty}. 
The theorem below is a generalisation to the case of arbitrary powers
of proper weighted shifts on directed forests.
It can be deduced from the result for trees, \LEM{powers} and \LEM{tree-reducing}
but we give the full proof for the reader's convenience.

\begin{thm}{k-hypo}Let $S_\blambda$ be a proper bounded weighted shift
on a directed forest $\Tree=(V,\parf)$.
Then the operator $S_\blambda^k$ is hyponormal if and only if the following two conditions are satisfied:
\begin{abece}
\item the forest $\Tree^k$ is leafless $($i.e.\ $V=\parf^{[k]}(V))$,
\item for every $v\in V$,
\begin{equation}\label{eq:k-hypo}
\hip{k}(v):=\sum_{u\in\Chif[k](v)} \frac{|\lambda_u^{(k)}|^2}{\|S^k e_u\|^2}\leq 1.
\end{equation}\end{abece}
\end{thm}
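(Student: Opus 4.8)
The plan is to reduce the power $S_\blambda^k$ to an ordinary weighted shift and then apply the known hyponormality criterion. By \LEM{powers}, the operator $S_\blambda^k$ is exactly the weighted shift $S_{\blambda^{(k)}}$ on the $k$-th power forest $\Tree^k$, and it is proper on $\Tree^k$ because $S_\blambda$ is proper on $\Tree$. Thus hyponormality of $S_\blambda^k$ is equivalent to hyponormality of the single weighted shift $S_{\blambda^{(k)}}$ on the directed forest $\Tree^k$. This lets me work with a $1$-step shift on a new forest, so conditions (a) and (b) should emerge as the standard hyponormality conditions for a proper bounded weighted shift, translated back through the identities $\Chif_{\Tree^k}(v)=\Chif[k]_\Tree(v)$ (\LEM{tree-pow-def}~(\ref{tp:child})) and $\Troot(\Tree^k)=\{v:\parf^{k-1}(v)\in\Troot(\Tree)\}$ (\LEM{tree-pow-def}~(\ref{tp:root0})).

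First I would compute the commutator $[S^{\ast}, S]$ for a proper bounded weighted shift $S=S_\bmu$ on an arbitrary directed forest $\Tree'=(V,\parf')$, testing it against the basis vectors $e_v$. Using the formulas from \LEM{powers} with $k=1$, one has $S e_v=\sum_{u\in\Chif(v)}\mu_u e_u$ and $S^{\ast} e_u=\overline{\mu_u}e_{\parf'(u)}$. Writing a general $f=\sum_v f(v)e_v$, I would expand $\scp{[S^{\ast},S]f,f}=\|Sf\|^2-\|S^{\ast}f\|^2$ and organise the result by grouping the $S^{\ast}Sf$ and $SS^{\ast}f$ contributions vertex-by-vertex. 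The key structural fact is that $S$ maps the span of $\{e_u:u\in\Chif(v)\}$ into orthogonal pieces indexed by $v$, which should reveal that hyponormality decouples over the vertices $v\in V$. For each $v$ the relevant quadratic form involves the vector $(f(u))_{u\in\Chif(v)}$ together with $f(v)$, and positivity of a rank-one-plus-diagonal form on this finite/countable index set yields the scalar condition. Leaflessness enters precisely because a leaf $u\in V^{\circ}$ with $\deg(u)=0$ would force $S^{\ast}S$ to annihilate a direction that $SS^{\ast}$ does not, producing a strictly negative diagonal entry and destroying positivity.

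The main analytic step is the positivity analysis of the per-vertex quadratic form. For fixed $v$, the form looks like
\[
\sum_{u\in\Chif(v)}\|S e_u\|^2\,|f(u)|^2 \;-\; |f(v)|^2\sum_{u\in\Chif(v)}|\mu_u|^2,
\]
and I would show $[S^{\ast},S]\geq 0$ is equivalent to requiring, for every $v$, that this be nonnegative for all admissible $f$. Minimising over the $f(u)$ for fixed $f(v)$ (or applying Cauchy--Schwarz in the form that optimises the ratio) produces the condition $\sum_{u\in\Chif(v)}|\mu_u|^2/\|S e_u\|^2\leq 1$, together with the side requirement that $\|S e_u\|^2\neq 0$ for every $u\in\Chif(v)$ whenever that vertex carries mass, i.e. that no child is a leaf, which is exactly leaflessness of $\Tree'$. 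Translating this back via $\mu_u=\lambda_u^{(k)}$, $\Chif_{\Tree^k}(v)=\Chif[k]_\Tree(v)$, and $\|S_{\blambda^{(k)}}e_u\|^2=\|S_\blambda^k e_u\|^2=\|S^k e_u\|^2$ gives precisely \eqref{eq:k-hypo}, and leaflessness of $\Tree^k$ is condition (a).

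The hard part will be handling the bookkeeping of possibly infinite sums and the degenerate/boundary vertices carefully: I must justify interchanging the per-vertex decomposition with the global quadratic form for arbitrary $f\in\eld{V}$ (not just finitely supported ones), and I must be precise about which $u$ have $\|S^k e_u\|=0$ so that the quotient in \eqref{eq:k-hypo} is well defined. Properness of $S_\blambda$ ensures $\lambda_u^{(k)}\neq 0$ exactly for non-roots of $\Tree^k$, so the only way $\|S^k e_u\|=0$ is when $u$ is a leaf of $\Tree^k$; hence if (a) holds every denominator is positive and the sum in (b) is unambiguous, whereas if (a) fails I would exhibit a witness $f$ (supported near the offending leaf and its parent) making $\scp{[S^{\ast k},S^k]f,f}<0$. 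Assembling the ``only if'' direction from such witnesses and the ``if'' direction from the vertexwise positivity estimate, with the reduction of \LEM{powers} absorbing the power into a plain shift, completes the argument.
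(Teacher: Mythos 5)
Your overall strategy is sound and is in fact the one the paper itself flags as available: by \LEM{powers} the operator $S_\blambda^k$ \emph{is} the proper weighted shift $S_{\blambda^{(k)}}$ on $\Tree^k$, so everything reduces to the hyponormality criterion for a single proper bounded weighted shift on a directed forest; the paper's own proof is exactly this criterion carried out with the index $k$ kept visible. Your leaf obstruction ($\|Se_u\|=0$ while $\|S^{\ast}e_u\|=|\mu_u|>0$ for a leaf $u$), the Cauchy--Schwarz sufficiency argument, and the final scalar condition all match the paper.

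However, the displayed per-vertex quadratic form is wrong, and the step built on it would fail. Writing $\mu_u$ for the weights of the one-step shift $S$, one has $\|Sf\|^2=\sum_{v}|f(v)|^2\sum_{u\in\Chif(v)}|\mu_u|^2$ and $\|S^{\ast}f\|^2=\sum_{v}\bigl|\sum_{u\in\Chif(v)}f(u)\overline{\mu_u}\bigr|^2$, so after re-indexing the first sum over children (which discards only non-negative terms supported on the roots, harmless for both directions) the correct block at $v$ is
\[
\sum_{u\in\Chif(v)}\|Se_u\|^2|f(u)|^2-\Bigl|\sum_{u\in\Chif(v)}f(u)\overline{\mu_u}\Bigr|^2,
\]
a diagonal form minus a rank-one form in the variables $(f(u))_{u\in\Chif(v)}$ alone; $f(v)$ does not appear. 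The form you wrote subtracts $|f(v)|^2\sum_{u\in\Chif(v)}|\mu_u|^2$ instead, i.e.\ the $\|Sf\|^2$ contribution at $v$ rather than the $\|S^{\ast}f\|^2$ one. That expression is never non-negative for all $f$ unless $\|Se_v\|=0$ (take $f=e_v$), so the asserted equivalence with $[S^{\ast},S]\geq 0$ is false, and ``minimising over the $f(u)$ for fixed $f(v)$'' applied to it yields $-|f(v)|^2\|Se_v\|^2$, not the bound $\sum_{u}|\mu_u|^2/\|Se_u\|^2\leq 1$. Your surrounding description (diagonal versus rank-one, Cauchy--Schwarz optimising the ratio) shows you have the right mechanism in mind, and with the corrected block the proof closes exactly as in the paper: necessity by testing against $f(u)=\mu_u/\|Se_u\|^2$ on finite subsets of $\Chif(v)$, sufficiency by Cauchy--Schwarz applied to the sequences $\bigl(f(u)\|Se_u\|\bigr)_u$ and $\bigl(\overline{\mu_u}/\|Se_u\|\bigr)_u$. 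So: right approach, but the central displayed formula must be repaired before the argument stands.
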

Note that leaflessness together with properness prevent the denominators in \eqref{eq:k-hypo}
to be zero.

\begin{proof}
Observe that for any leaf $v\in V\setminus \parf^{[k]}(V)$ we have $\|S_\blambda^k e_v\|=0$.
On the other hand, by \eqref{eq:S*dok} we get $\|S_\blambda^{*k} e_v\|=|\lambda^{(k)}_v|>0$
($v$ is not a root and $S_\blambda^k$ is proper). Hence, lack of leaves is a necessary
condition for hyponormality.

From \eqref{eq:Sdok} and \eqref{eq:S*dok} we get
(note that all the series are absolutely summable)
\setlength{\arraycolsep}{1pt}\begin{align}
\|S_\blambda^k f\|^2-\|S_\blambda^{\ast k} f\|^2&=\nonumber
\sum_{v\in V}\biggl( |f(v)|^2\sum_{u\in\Chif[k](v)}|\lambda^{(k)}_u|^2\biggr)\\
&\hspace{3em} -\sum_{v\in V} \Bigl| \sum_{u\in\Chif[k](v)} f(u)
\overline{\lambda^{(k)}_u} \Bigr|^2 \nonumber \\
&= \sum_{v\in V}\biggl( |f(v)|^2\|S^k_\blambda e_v\|^2 -
\Bigl| \sum_{u\in\Chif[k](v)} f(u) \overline{\lambda^{(k)}_u}\Bigr|^2\biggr) \nonumber\\
\label{eq:k-hypo2}
&=\sum_{v\in V}\biggl( \sum_{u\in\Chif[k](v)} \!|f(u)|^2\|S^k_\blambda e_u\|^2
- \Bigl| \sum_{u\in\Chif[k](v)}\!f(u) \overline{\lambda^{(k)}_u}\Bigr|^2\biggr).
\end{align}
Positivity of the above expression for every $f\in\eld{V}$ is equivalent
to the hyponormality of $S_\blambda^k$. 

Assuming the hyponormality of $S_\blambda^k$ and substituting $f\in \eld{V}$ defined
by \[
f(u):=\begin{cases}\frac{\lambda^{(k)}_u}{\|S^k_\blambda e_u\|^2}&\text{for }u\in U\\
0 &\text{otherwise}\end{cases},\quad u\in U, \]
where $U$ is a finite subset of $\Chif[k](v)$, we infer from \eqref{eq:k-hypo2} that
\begin{equation}
0\leq \sum_{u\in U}\biggl|\frac{\lambda^{(k)}_u}{\|S^k_\blambda e_u\|^2}\biggr|^2
	\|S^k_\blambda e_u\|^2 -
\biggl| \sum_{u\in U} \frac{\lambda^{(k)}_u
	\overline{\lambda^{(k)}_u}}{\|S^k_\blambda e_u\|}\biggr|^2,
\end{equation}
and so
\begin{equation}
\sum_{u\in U}\frac{|\lambda^{(k)}_u|^2}{\|S^k_\blambda e_u\|^2} \geq
\biggl( \sum_{u \in U} \frac{|\lambda^{(k)}_u|^2}{\|S^k_\blambda e_u\|^2}\biggr)^2.
\end{equation}
This implies that
$\sum_{u \in U} \frac{|\lambda^{(k)}_u|^2}{\|S^k_\blambda e_u\|^2}\leq 1$.
Since  $U$ is an arbitrary finite subset of $\Chif[k](v)$ we arrive at \eqref{eq:k-hypo}.

To prove the opposite implication it suffices to show that the last expression in  \eqref{eq:k-hypo2} is non-negative.
This can be done by applying the Cauchy-Schwarz inequality to the sequences
$\Bigl(\frac{\overline{\lambda^{(k)}_u}}{\|S^k_\blambda e_u\|}\Bigr)_u$
and $\Bigl(f(u) \|S^k_\blambda e_u\| \Bigr)_u$, where $u$ varies over $\Chif[k](v)$ and $v\in V$. This completes the proof.
\end{proof}

A straightforward consequence of the above theorem is the characterisation
of power hyponormality in the class of weighted shifts on directed forests.
\begin{cor}{power-hypo}
A proper bounded weighted shift $S_\blambda$ on a directed forest $\Tree=(V,\parf)$
is power hyponormal if and only if the following two conditions are satisfied:
\begin{abece}
\item the forest $\Tree$ is leafless,
\item $\hip{k}(v) \leq 1$ for every $v\in V$ and $k\in \N$.
\end{abece}
\end{cor}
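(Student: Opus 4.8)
The plan is to deduce this directly from \THM{k-hypo}, treating the corollary as a matter of quantifying the theorem over all powers. By the definition of power hyponormality, $S_\blambda$ is power hyponormal precisely when $S_\blambda^k$ is hyponormal for every $k\in\N$. So the first step is to invoke \THM{k-hypo} for each fixed $k$: since $S_\blambda$ is a proper bounded weighted shift on $\Tree$, the operator $S_\blambda^k$ is hyponormal if and only if $\Tree^k$ is leafless and $\hip{k}(v)\leq 1$ for every $v\in V$. Ranging over all $k\in\N$, power hyponormality becomes the conjunction, over all $k$, of these two clauses.

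The only step that requires an argument is to collapse the family of leaflessness conditions $\{\Tree^k\text{ is leafless}:k\in\N\}$ into the single condition (a) that $\Tree$ itself is leafless. For the forward implication I would specialise to $k=1$ and use $\Tree^1=\Tree$ (\LEM{tree-pow-def}) to read off that $\Tree$ is leafless. Conversely, \LEM{tree-power} shows that leaflessness of $\Tree$ is inherited by every power $\Tree^k$, so assuming (a) yields that all $\Tree^k$ are leafless simultaneously. Hence the leaflessness requirements for all powers are together equivalent to (a). With the leaf conditions handled, condition (b) of the corollary is literally the family of conditions (b) of \THM{k-hypo} ranged over $k\in\N$, so combining the two reductions gives that power hyponormality is equivalent to (a) together with (b).

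I do not expect any genuine obstacle here, since the entire analytic content is already packaged in \THM{k-hypo}; the corollary is essentially bookkeeping over the index $k$ plus the simplification of the leaf clause. The one point I would state carefully is well-posedness: the non-vanishing of the denominators $\|S^k e_u\|$ appearing in $\hip{k}$ is guaranteed exactly by leaflessness together with properness, as already noted after \THM{k-hypo}, so that condition (b) is meaningful precisely under condition (a).
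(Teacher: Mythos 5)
Your proof is correct and matches the paper's intent exactly: the paper states this corollary without proof as ``a straightforward consequence'' of \THM{k-hypo}, and your bookkeeping over $k$ --- using $\Tree^1=\Tree$ for one direction and \LEM{tree-power}~(a) to propagate leaflessness to all powers for the other --- is precisely the omitted argument.
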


It is well known that for a classical weighted shift hyponormality implies power hyponormality
(it is equivalent to the fact that the sequence of weights is monotonically increasing in the absolute value).

It turns out that a more general class of directed forests has this property.
The proposed name for members of this class refers to the fact that their ``branches'' do not fork.

\begin{dfn}{forkless-forest} We call a directed forest $\Tree=(V,\parf)$ a
\emph{forkless forest} if for every $v\in V^\circ$ its degree equals one.
Name \emph{forkless tree} is reserved for a forkless forest containing only one tree.
\end{dfn}
We define the following canonical forkless trees.
\begin{dfn}{forkless-tree}
Let $J_n:=\{k\in\N: k\leq n\}$ for $n\in \ZP\cup\{\infty\}$.
We call the rooted directed tree $\Tree=(V_n,\parf_n)$,
where $V_n:=J_n\!\times\!\N\sqcup\{\omega\}$, $\omega:=(0,0)$ and
\[\parf_n((j,k)):=\begin{cases} (j,k-1) & \text{if } k\geq 2 \\
\omega & \text{if } k=0,1\end{cases},\quad (j,k)\in V_n,\]
the \emph{$n$-arm star} for finite $n$ or the \emph{infinite-arm star} for $n=\infty$.

The rootless directed tree $(\mathbb Z, \parf_{\mathbb Z})$, where $\parf_{\mathbb Z}(n)=n-1$
for $n\in\mathbb Z$, will be called \emph{the linear tree}.
\end{dfn}

\begin{pro}{forkless-trees}
Every at most countable rooted forkless tree is isomorphic to either
the $n$-arm star for some $n\in\ZP$ or the infinite-arm star.

Every rootless and forkless tree is isomorphic to the linear tree.
\end{pro}
\begin{proof}
Assume that $\Tree=(V,\parf)$ is a rooted forkless tree and denote its root by $\omega$.
Set $A_0:=\Chif(\omega)$. Since the tree is at most countable, so is $A_0$.
We claim that $\Tree$ is isomorphic to the $n$-arm star, where $n$ is the cardinality of $A_0$.

Clearly, if $A_0=\emptyset$ then $V=\Des(\omega)=\{\omega\}$ and $\Tree$ is isomorphic
to the $0$-arm star.

Fix an arbitrary bijection $a:J_n\to A_0$, where the set $J_n\subseteq\N$ is given in \DEFN{forkless-tree}. Since the degree of each vertex in $V^\circ$ equals one, we can treat
$\Chif\rest{V^\circ}=(\parf\rest{V^\circ})^{-1}$ as
a well defined bijection from $V^\circ$ to $V^\circ\setminus A_0$.
Define the isomorphism $f\colon V_n\to V$ of directed trees $\Tree$ and $(V_n,\parf_n)$ by
\[ f((j,k)):=\begin{cases} \omega & \text{ for } k=0\\
\Chif^{k-1}(a(j)) & \text{ for } k\geq 1\end{cases},\quad (j,k)\in V_n. \]
It can be readily verified that $f$ is a bijection and $f\circ \parf_n=\parf\circ f$.

For a rootless and forkless tree $\Tree=(V,\parf)$ we fix any vertex $v_0\in V=V^\circ$ and
put $f(n):=\parf^{-n}(v_0)$ for $n\in \mathbb Z$. The definition is correct since
$\parf$ is invertible due to the assumption on degrees and the lack of roots.
The map $f$ is surjective because $\Tree$ is a directed tree and for every vertex $v\in V$
there exist $m,n\in \ZP$ such that $\parf^n(v)=\parf^m(v_0)$, which means that
\[ v = \parf^{m-n}(v_0)=f(n-m). \]
Hence $f$ is an isomorphism between the linear tree and $\Tree$.
\end{proof}

\begin{lem}{star-prop-hypo} A proper bounded weighted shift on
a forkless tree is power hyponormal if and only if it is hyponormal.
\end{lem}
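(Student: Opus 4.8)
The forward implication is immediate, since power hyponormality asks $S_\blambda^n$ to be hyponormal for every $n\geq1$ and the case $n=1$ is hyponormality itself. For the converse I assume $S_\blambda$ hyponormal and check the two conditions of \COR{power-hypo}. A forkless tree is automatically leafless: a leaf would be a non-root vertex of degree $0$ (see \LEM{tree-basics}), whereas every non-root vertex of a forkless tree has degree $1$. Thus condition (a) holds, and only the inequalities $\hip{k}(v)\leq 1$ remain, for all $v\in V$ and $k\in\N$; by \THM{k-hypo} hyponormality of $S_\blambda$ is precisely the sub-family of these with $k=1$.

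The structure I would exploit is that in a forkless tree every non-root vertex $v$ has a unique child, and hence determines a single infinite arm $v=v_0\to v_1\to v_2\to\cdots$, where $v_{i+1}$ is the only child of $v_i$. Consequently $\Chif[k](v)=\{v_k\}$ is a singleton for non-root $v$, while at the root $\omega$ (if present) $\Chif[k](\omega)=\{u_{k-1}:u\in\Chif(\omega)\}$ is in bijection with $\Chif(\omega)$. Writing $c_i:=|\lambda_{v_i}|$ along a fixed arm and using \LEM{powers}, the ingredients of \eqref{eq:k-hypo} become products of consecutive weight moduli; properness and leaflessness keep all denominators positive. Reading the hyponormality inequalities $\hip{1}(v_i)\leq1$ along an arm yields the monotonicity $c_i\leq c_{i+1}$ for every index $i\geq1$.

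For a non-root vertex $v=v_0$ one computes $\hip{k}(v)=\bigl(\prod_{i=1}^k c_i\bigr)^2\big/\bigl(\prod_{i=1}^k c_{k+i}\bigr)^2$, and the monotonicity just noted gives $c_i\leq c_{k+i}$ for each $i\geq1$, so the ratio is at most $1$. This disposes of every non-root vertex, and in particular of the entire rootless (linear-tree) case.

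The root is the only genuinely delicate point, and I expect it to be the main obstacle. Here
\[ \hip{k}(\omega)=\sum_{u\in\Chif(\omega)} \Bigl(\prod_{i=0}^{k-1} c^{(u)}_i\Bigr)^{2}\Big/\Bigl(\prod_{i=k}^{2k-1} c^{(u)}_i\Bigr)^{2}, \]
where $c^{(u)}_i$ are the weight moduli along the arm through $u$; the root-edge weight $c^{(u)}_0$ now enters each numerator, yet the arm monotonicity only holds from index $1$ onwards, so a plain factor-by-factor comparison fails. The key estimate I would establish is that for positive reals with $c_i\leq c_{i+1}$ $(i\geq1)$ one has $\prod_{i=0}^{k-1} c_i/c_{i+k}\leq c_0/c_1$ for every $k$; after cross-multiplying this reduces to $c_1^2 c_2\cdots c_{k-1}\leq c_k c_{k+1}\cdots c_{2k-1}$, which follows by pairing the $k$ factors on the left with distinct larger factors on the right via the monotonicity. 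Applying this estimate term by term and summing over $u\in\Chif(\omega)$ gives $\hip{k}(\omega)\leq\sum_{u}\bigl(c^{(u)}_0/c^{(u)}_1\bigr)^2=\hip{1}(\omega)\leq1$, which completes the verification of \COR{power-hypo} and hence the lemma.
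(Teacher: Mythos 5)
Your proof is correct and follows essentially the same route as the paper: hyponormality of the restrictions to the arms forces the weight moduli to be nondecreasing from the second edge of each arm onward, and this monotonicity is then used to dominate $\hip{k}(\omega)$ by $\hip{1}(\omega)\leq 1$ (the non-root vertices reducing to the classical unilateral/bilateral situation). The only cosmetic differences are that you bypass the explicit classification of forkless trees (Proposition~\ref{pro:forkless-trees}) by arguing directly on arms, and you compare $\hip{k}(\omega)$ with $\hip{1}(\omega)$ via a single product inequality rather than through the chain $\hip{k}(\omega)\leq\hip{k-1}(\omega)\leq\cdots\leq\hip{1}(\omega)$ used in the paper.
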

\begin{proof}
It suffices to prove that hyponormality implies power hyponormality.
Since we assume properness, the tree is at most countable (see Proposition~\ref{pro:shift-basic}).
We have classified all such trees in the last proposition.

The case when considered tree is isomorphic to the linear tree is well known -- those are
the classical bilateral weighted shifts.
We will focus on the case of the $n$-arm star or the infinite-arm star.
In the trivial case when $n=0$, the given operator must be zero and hence it is power hyponormal.

Assume $S_\blambda$ is a proper bounded and hyponormal weighted shift on a
forkless forest $\Tree=(V_n,\parf_n)$ with $n\in\N\cup\{\infty\}$ (see \DEFN{forkless-tree}).

Using \eqref{eq:Sdok} we can compute
\begin{equation}
\|S_\blambda^k e_{(a,m)}\|^2
=\Bigl| \prod_{j=1}^{k}\lambda_{(a,m+j)}\Bigr|^2,\quad
a\in J_n,\ m\geq 1,\ k\geq 1.
\end{equation}

Since $S_\blambda$ is hyponormal, its restrictions to the ``arms'', i.e.\ subspaces
\[ \eld{\{(a,m):m\in\N\}}=\eld{\Des((a,1))},\quad a\in J_n,\]
are hyponormal as well.
These restrictions, being actually standard unilateral weighted shifts, have to
be power hyponormal. This means that (see \eqref{eq:k-hypo})
\[ \hip{k}((a,m))\leq 1,\quad k\geq 1,\ a\in J_n,\ m\in\N. \]
Moreover, $|\lambda_{(a,m)}|\leq|\lambda_{(a,m+1)}|$ for $m\geq 2$, since $\hip{1}((a,m\!-\!1))\leq 1$.

Since $S_\blambda$ is hyponormal, we have
\[\hip{1}(\omega)=\sum_{a\in J_n}\left|\frac{\lambda_{(a,1)}}{\lambda_{(a,2)}}\right|^2\leq 1,\]
and 
\begin{align}\nonumber
\hip{k}(\omega) &=\sum_{a\in J_n}\left|
\frac{\prod_{j=1}^k \lambda_{(a,j)}}{\prod_{j=1}^k \lambda_{(a,k+j)}}\right|^2\\ &=
\sum_{a\in J_n}\left|\frac{\lambda_{(a,k)} \Bigl( \prod_{j=1}^{k-1} \lambda_{(a,j)} \Bigr) \lambda_{(a,k)}}%
{\Bigl( \prod_{j=1}^{k-1} \lambda_{(a,k-1+j)} \Bigr) \lambda_{(a,2k-1)} \lambda_{(a,2k)}}\right|^2,\quad k\geq 2.\label{eq:star-khip}
\end{align}
Since the sequence $\bigl\{ |\lambda_{(a,m)}| \bigr\}_{m=2}^\infty$ is monotonically increasing, we have
\[
\bigl| \lambda_{(a,k)} \bigr|^2 \leq \bigl| \lambda_{(a,2k-1)} \lambda_{(a,2k)} \bigr|,\quad a\in J_n,\ k\geq 2,
\]
and consequently the respective summands in \eqref{eq:star-khip} are monotonically decreasing with respect
to $k$, hence we conclude
\[\hip{k}(\omega)\leq \hip{k-1}(\omega)\leq \hip{1}(\omega)\leq 1.\]
According to \COR{power-hypo}, $S_\blambda$ is power hyponormal.
\end{proof}

The assumption of properness is not necessary for \LEM{star-prop-hypo} to hold.
\begin{lem}{forkless-full}
A bounded weighted shift on a forkless forest is power hyponormal if and only if it is hyponormal.
\end{lem}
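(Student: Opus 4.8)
The plan is to reduce the statement for an arbitrary (not necessarily proper) bounded weighted shift on a forkless forest to the proper case already settled in \LEM{star-prop-hypo}, using the properisation procedure from \PRO{proper} and the decomposition into trees from \LEM{tree-reducing}. First I would note that, by \LEM{tree-reducing}, a densely defined weighted shift on a directed forest is the orthogonal sum of its restrictions to the individual trees; since both hyponormality and power hyponormality of a direct sum are equivalent to the corresponding property holding on every summand, it suffices to prove the claim for a single forkless tree. (Bounded shifts are densely defined, so this reduction is legitimate.)

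Next I would handle the passage from a general forkless shift to a proper one. Given a bounded weighted shift $S_\blambda$ on a forkless forest $\Tree=(V,\parf)$, \PRO{proper} produces a directed forest $\Tree_\blambda=(V,\parf_\blambda)$ on which the \emph{same} operator $S_\blambda$ (acting on the same space $\eld{V}$, with the same domain and values) is a proper weighted shift. The key observation is that $\Tree_\blambda$ is again forkless: the parent function $\parf_\blambda$ only ever redirects a vertex with zero weight to itself, turning it into a root, and this can only decrease the degree of a vertex, so no vertex in $V^\circ$ of $\Tree_\blambda$ acquires degree larger than one. Hence $\Tree_\blambda$ is a forkless forest and $S_\blambda$ is a proper weighted shift on it.

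Now \LEM{star-prop-hypo} applies to each tree of $\Tree_\blambda$, giving that the proper shift $S_\blambda$ on $\Tree_\blambda$ is power hyponormal if and only if it is hyponormal. Since $S_\blambda$ as an operator is literally unchanged by replacing $\Tree$ with $\Tree_\blambda$, the equivalence transfers back verbatim to the original forkless forest $\Tree$. As in \LEM{star-prop-hypo}, only the implication ``hyponormal $\Rightarrow$ power hyponormal'' needs proof, the reverse being trivial.

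The main obstacle, and the point to argue carefully, is verifying that properisation preserves forklessness and, more precisely, that the resulting trees of $\Tree_\blambda$ are exactly the forkless trees classified in \PRO{forkless-trees} (arm-stars or linear trees), so that \LEM{star-prop-hypo} genuinely applies; one must also check that the reduction via \LEM{tree-reducing} is compatible with this, i.e.\ that the trees of $\Tree_\blambda$ are forkless trees and the restriction of $S_\blambda$ to each is a proper forkless shift. Once these structural points are confirmed, the rest is a direct citation of the results already established.
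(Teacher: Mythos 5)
Your overall route is the paper's: properise via Proposition~\ref{pro:proper}, check that forklessness survives, decompose into trees with \LEM{tree-reducing}, and quote \LEM{star-prop-hypo} on each tree. But there is a concrete gap at exactly the point you flag as ``the main obstacle''. You argue that passing from $\parf$ to $\parf_\blambda$ can only decrease degrees, hence no vertex of $V^\circ$ in $\Tree_\blambda$ has degree larger than one, ``hence $\Tree_\blambda$ is a forkless forest''. That last step does not follow: by \DEFN{forkless-forest} a forkless forest must have every non-root vertex of degree \emph{exactly} one, and your argument leaves open the possibility that a non-root vertex of $\Tree_\blambda$ has degree zero, i.e.\ becomes a leaf (this happens whenever the unique child of $v$ in $\Tree$ carries weight $0$). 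If that occurred, $\Tree_\blambda$ would not be forkless, Proposition~\ref{pro:forkless-trees} would not classify its trees as arm-stars or linear trees, and \LEM{star-prop-hypo} would not apply.

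The missing ingredient is the hyponormality hypothesis itself: since $S_\blambda$ is a proper bounded hyponormal weighted shift on $\Tree_\blambda$, \THM{k-hypo} forces $\Tree_\blambda$ to be leafless, so every non-root vertex has degree at least one; combined with your degree-non-increasing observation (and the fact that roots of $\Tree$ remain roots of $\Tree_\blambda$, so a non-root of $\Tree_\blambda$ is a non-root of $\Tree$ and has degree one there), this yields degree exactly one. This is precisely how the paper closes the argument. Everything else in your proposal --- the reduction to single trees, the observation that the operator is literally unchanged under properisation, the countability of the resulting trees, and the final appeal to \LEM{star-prop-hypo} --- is correct and matches the paper.
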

\begin{proof}
It suffices to prove that if $\Tree$ is a forkless tree and $S_\blambda$ is
a bounded hyponormal weighted shift on $\Tree$ then $S_\blambda$ is power hyponormal.

We begin by showing that $\Tree_\blambda$ (see Proposition~\ref{pro:proper}) is also
a forkless forest. By \THM{k-hypo}, $\Tree_\blambda$ has to be leafless.
Consider a vertex $v\in V$.
If $v\notin \Troot(\Tree_\blambda)$ then $v\notin \Troot(\Tree)$ and
$\emptyset\neq \Chif_{\Tree_\blambda}(v)\subseteq \Chif_\Tree(v)$.
This means that $\Tree_\blambda$ is a forkless forest.

From boundedness of $S_\blambda$ it follows that all the trees in $\Tree_\blambda$
are countable (cf.\ Proposition~\ref{pro:shift-basic}~(\ref{shbs:countable})). According to \LEM{tree-reducing},
$S_\blambda$ is an orthogonal sum of weighted shifts on forkless trees.
Each of them is hyponormal and by \LEM{star-prop-hypo}
it is power hyponormal. Hence the whole $S_\blambda$ is power hyponormal.
\end{proof}

The existence of a proper bounded hyponormal weighted shift on a directed forest
requires the forest in question to be leafless and locally countable
(cf.\ Proposition~\ref{pro:shift-basic}~(\ref{shbs:countable}) and \THM{k-hypo}).
It turns out that forkless forests are the only leafless (see Remark~\ref{rem:leafless})
directed forests on which every proper bounded hyponormal weighted shift is power hyponormal.

\begin{thm}{only-stars} 
Let $\Tree=(V,\parf)$ be a leafless locally countable directed forest. Then \NWSR
\begin{abece}
\item every bounded hyponormal weighted shift on $\Tree$ is power hyponormal,
\item if $S_\blambda$ is a hyponormal weighted shift on $\Tree$ then
$S_\blambda^2$ is hyponormal,
\item $\Tree$ is a forkless forest.
\end{abece}
Moreover, the conditions {\rm(a)} and {\rm(b)} remain equivalent if we restrict to proper weighted shifts.
\end{thm}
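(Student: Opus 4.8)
The plan is to prove the cyclic chain (a)$\Rightarrow$(b)$\Rightarrow$(c)$\Rightarrow$(a) and then read off the ``moreover''. Two links are immediate: (a)$\Rightarrow$(b) holds because power hyponormality includes hyponormality of the square, and (c)$\Rightarrow$(a) is exactly \LEM{forkless-full}, which says that on a forkless forest a bounded weighted shift is power hyponormal as soon as it is hyponormal. All the work sits in the remaining link (b)$\Rightarrow$(c), which I would establish in contrapositive form: assuming $\Tree$ is \emph{not} forkless, I would produce a bounded hyponormal weighted shift $S_\blambda$ whose square is not hyponormal, and arrange for it to be proper so that the same argument serves the ``moreover''.

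First I would locate the branching. Since $\Tree$ is leafless, \LEM{tree-basics}~(\ref{tbs:leafs}) excludes vertices of degree $0$ in $V^\circ$, so the failure of forklessness yields a non-root vertex $v\in V^\circ$ with $\deg(v)\geq 2$. Fix $p:=\parf(v)\neq v$ and two distinct children $w_1,w_2\in\Chif(v)$, and, using leaflessness together with local countability, pick an infinite descending ray out of each $w_i$; these rays are disjoint since they live in the disjoint sets $\Des(w_1)$, $\Des(w_2)$. The idea is to concentrate $S_\blambda$ on the ``spine'' $\{p,v\}\cup\mathrm{ray}_1\cup\mathrm{ray}_2$, placing carefully tuned weights there, and to make $S_\blambda$ behave like an isometry off the spine, so that the inequalities of \THM{k-hypo} hold there for free.

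Concretely, with a large parameter $R$, I would load the first ray heavily (weights of size $\asymp R$) and the second lightly (weights $\asymp 1/R$), take the weight $\lambda_v$ on the edge $p\to v$ of size $\asymp R$, and choose the weights $\lambda_{w_1},\lambda_{w_2}$ so that $\hip{1}(v)<1$ with room to spare. The asymmetry is the point. Writing $x_i$ for the ray-successor of $w_i$, and using $\|S_\blambda^2 e_{w_i}\|^2=\sum_{x\in\Chif(w_i)}|\lambda_x|^2\|S_\blambda e_x\|^2\approx|\lambda_{x_i}|^2\|S_\blambda e_{x_i}\|^2$, the two grandchildren $w_1,w_2\in\Chif[2](p)$ already force
\[
\hip{2}(p)\ \geq\ \frac{|\lambda^{(2)}_{w_1}|^2}{\|S_\blambda^2 e_{w_1}\|^2}+\frac{|\lambda^{(2)}_{w_2}|^2}{\|S_\blambda^2 e_{w_2}\|^2}
\ =\ |\lambda_v|^2\Bigl(\tfrac{|\lambda_{w_1}|^2}{\|S_\blambda^2 e_{w_1}\|^2}+\tfrac{|\lambda_{w_2}|^2}{\|S_\blambda^2 e_{w_2}\|^2}\Bigr).
\]
The light ray makes one summand blow up like $R^4$, while $|\lambda_v|^2$ stays within $\|S_\blambda e_v\|^2$, so that $\hip{1}(p)\leq 1$ is preserved. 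Hence for $R$ large one has $\hip{2}(p)>1$, and $S_\blambda^2$ fails the criterion of \THM{k-hypo} although $S_\blambda$ itself remains hyponormal.

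The main obstacle is turning this local idea into a genuine weight system on the \emph{whole} given tree: spine vertices may carry further children, and one must certify $\hip{1}(u)\leq 1$ at all of them at once while keeping $S_\blambda$ bounded. For the bare equivalence I would sidestep this entirely: put the tuned weights on the spine, set $\lambda_p=0$ and \emph{all} off-spine weights to $0$, and invoke Proposition~\ref{pro:proper} together with \LEM{tree-reducing} to recognise $S_\blambda$ as the orthogonal sum of the \emph{proper} shift on the elementary one-fork tree $p\to v\to\{w_1,w_2\}$ (with single-child rays), where the displayed estimate is an exact equality, and a zero operator; this sum is hyponormal but its square is not. For the ``moreover'' I cannot delete edges, so instead I would run the spine construction with strict slack in every $\hip{1}$-inequality (for instance $\hip{1}(v)\leq\tfrac12$, $|\lambda_v|^2\leq\tfrac12\|S_\blambda e_v\|^2$, and ray weights strictly increasing toward a finite bound), and then restore properness by hanging a tiny positive weight on each off-spine edge with an isometry on the subtree below it; smallness preserves boundedness and all $\hip{1}\leq 1$, while the strict inequality $\hip{2}(p)>1$ survives. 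This counterexample being proper, $\neg$(c) negates (b) even when (b) is restricted to proper shifts; combined with the trivial (a)$\Rightarrow$(b) and with (c)$\Rightarrow$(a) from \LEM{forkless-full} (which holds verbatim for proper shifts), this gives the equivalence of (a) and (b) within the proper class.
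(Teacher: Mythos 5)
Your proposal is correct and follows the same route as the paper: (a)$\Rightarrow$(b) is trivial, (c)$\Rightarrow$(a) is \LEM{forkless-full}, and (b)$\Rightarrow$(c) is proved in contrapositive by constructing, near a fork $v\in V^\circ$ with $\deg(v)\geq 2$, a proper bounded hyponormal weighted shift whose square violates the criterion of \THM{k-hypo}. The only real difference lies in the explicit counterexample: where you load two descending rays asymmetrically with a large parameter $R$ so that one summand of $\hip{2}(p)$ blows up like $R^4$ (and must then pad the rest of the tree by hand with small off-spine weights and slack along the rays), the paper normalises $\|S_\blambda e_u\|^2$ globally to equal $1$ outside the distinguished subtree and $2$ on $\Des(v_2)$, with tuned values $\tfrac{4}{3},\tfrac{2}{3}$ at the fork, which makes $\hip{1}\leq 1$ hold everywhere essentially for free while $\hip{2}(v_0)=\tfrac{10-\beta}{9}>1$.
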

\begin{proof}\newcommand{\ce}{\mathcal C}
The implication (c)$\Rightarrow$(a) follows from \LEM{forkless-full},
while the implication (a)$\Rightarrow$(b) is obvious regardless of whether the properness is assumed or not.
Therefore, it suffices to prove (b)$\Rightarrow$(c) under the
assumption that the considered weighted shifts are proper.

Suppose to the contrary that the directed forest $\Tree$
is not forkless. Then there exists a vertex $v_1\in V^\circ$ with multiple children.
We construct a proper bounded and hyponormal
weighted shift on $\Tree$ whose square is not hyponormal.  
Without loss of generality we may assume that $\Tree$ is actually a directed tree
(cf.\ \LEM{tree-reducing}).

\begin{figure}[hbt]
\includegraphics[width=.7\textwidth]{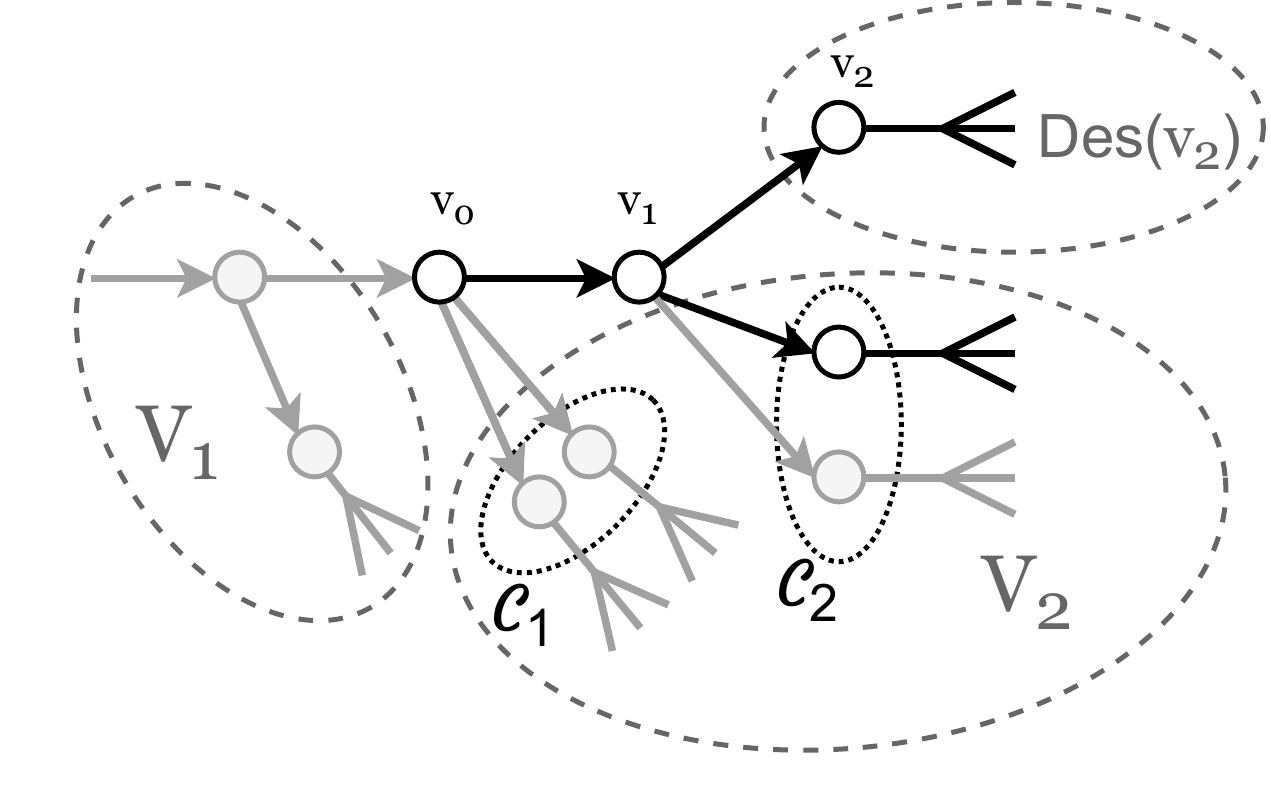}
\caption{An illustration of the directed tree appearing in the proof of \THM{only-stars}.
Gray arrows represent edges that may not exist.}
\label{fig:non-star}
\end{figure}

Fix $v_2\in \Chif(v_1)$. Then $\ce_2:=\Chif(v_1)\setminus\{v_2\}$ is non-empty. Set
$v_0:=\parf(v_1)$ and $\ce_1:=\Chif(v_0)\setminus\{v_1\}$.
We specify also the following sets of vertices:
\[ V_1 := V\setminus \Des(v_0), \quad
V_2 := \bigcup_{u\in \ce_0\cup\ce_1 } \Des(u).\]
Note that $\ce_1$ or $V_1$ may be empty.
In such a setting we have (cf.\ Figure~\ref{fig:non-star})
\[V= V_1\sqcup V_2 \sqcup \Des(v_2) \sqcup \{v_0,v_1\}.\]

Set $\beta:=0$ if $\ce_1=\emptyset$ and $\beta:=\frac{1}{2}$ otherwise.
Define the weights $\blambda=\{\lambda_v\}_{v\in V}$ in such a way that
\begin{enumerate}[\quad \small (W1)\ ]
\item $\lambda_v >0$ for $v\in V^\circ$ and $\lambda_\omega=0$ if $\omega\in\Troot(\Tree)$,
\item $\sum_{u\in \Chif(v)}\lambda_u^2 = 1$ for $v\in V_1\cup V_2$,
\item $\sum_{u\in \Chif(v)}\lambda_u^2 = 2$ for $v\in \Des(v_2)$,
\item $\sum_{v\in \ce_1}\lambda_v^2 = \beta$ and $\lambda_{v_1}^2=\frac{4}{3}(1-\beta)$,
\item $\sum_{v\in \ce_2}\lambda_v^2 = \frac{2}{3}$ and $\lambda_{v_2}^2=\frac{2}{3}$.
\end{enumerate}
According to \LEM{tree-basics}~(\ref{tbs:child-sep}) the weights $\{\lambda_u\}_{u\in\Chif(v)}$ can be defined
for each vertex $v\in V$ separately.

For the weighted shift $S_\blambda$ with weights defined as above we have
\begin{align*}
\|S_\blambda e_{v_0}\|^2 &=
\textstyle	\lambda_{v_1}^2+\sum_{v\in \ce_1}\lambda_v^2
\overset{\mathrm{(W4)}}{=} \frac{4}{3}-\frac{1}{3}\beta \in \{\frac{7}{6}, \frac{4}{3}\},\\
\|S_\blambda e_{v_1}\|^2 &=
\textstyle	\lambda_{v_2}^2+\sum_{v\in \ce_2}\lambda_v^2 \overset{\mathrm{(W5)}}{=} \frac{4}{3}.
\end{align*}
Moreover, $\|S_\blambda e_v\|^2=1$ for $v\in V_1\cup V_2$ and $\|S_\blambda e_v\|^2=2$
for $v\in \Des(v_2)$. Thus for every vertex $v\in V$, $\|S_\blambda e_v\|^2 \leq 2$
and consequently the operator $S_\blambda$ is bounded.

Now we have to show that $\hip{1}(v)\leq 1$ for every $v\in V$ (see \THM{k-hypo}).
In our case it is convenient to use the following simple fact:
\begin{displaymath}\begin{minipage}{.9\textwidth}\itshape
If $S_\blambda$ is a proper weighted shift on a leafless directed forest 
and $v\in V$ is such that
\begin{equation}\label{eq:strong-hypo}
 \|S_\blambda e_v\|\leq\|S_\blambda e_u\| \text{ for every } u\in \Chif(v),
\end{equation} then $\hip{1}(v)\leq 1$.
\end{minipage}
\end{displaymath}

Indeed, assuming \eqref{eq:strong-hypo} we get
\[\hip{1}(v) = \sum_{u\in \Chif(v)} \frac{|\lambda_u^2|}{\|S_\blambda e_u\|^2}
\leq \sum_{u\in \Chif(v)} \frac{|\lambda_u^2|}{\|S_\blambda e_v\|^2} = 1. \] 

Now we will prove that \eqref{eq:strong-hypo} holds for every $v\in V\setminus\{v_0,v_1\}$.

For $v\in V_1$ we have $\|S_\blambda e_v\|=1$ (cf.\ {\small (W2)})
and $\Chif(v)\subset V_1\cup\{v_0\}$. This means that $\|S_\blambda e_u\|\geq 1$
for any $u\in \Chif(v)$ and hence \eqref{eq:strong-hypo} is satisfied.

For $v\in \Des(v_2)$ clearly $\Chif(v)\subseteq \Des(v_2)$ and by {\small(W3)}
the inequality \eqref{eq:strong-hypo} is satisfied. The case of
$v\in V_2$ is analogous (cf.\ {\small(W2)}). 

The value $\hip{1}(v_0)$ can be computed directly.
\begin{align*}
\hip{1}(v_0)= \sum_{v\in\Chif(v_0)}\frac{\lambda_v^2}{\|S_\blambda e_v\|^2}
&\spacedeq{1.1ex}\frac{\lambda_{v_1}^2}{\|S_\blambda e_{v_1}\|^2}+\sum_{v\in \ce_1}\frac{\lambda_v^2}{\|S_\blambda e_v\|^2} \\
&\overset{\mathrm{(W4)}}{=} \frac{4}{3}(1-\beta)\frac{3}{4}+\sum_{v\in \ce_1}\lambda_v^2\\
&\spacedeq{1.1ex}(1-\beta)+\beta = 1.
\end{align*}

We are left with computing $\hip{1}(v_1)$. This can be done by using {\small (W2)},
{\small (W3)} and {\small (W5)}.
\begin{align*}
\hip{1}(v_1)= \sum_{v\in\Chif(v_1)}\frac{\lambda_v^2}{\|S_\blambda e_v\|^2}
&=\frac{\lambda_{v_2}^2}{\|S_\blambda e_{v_2}\|^2}+\sum_{v\in \ce_2}\frac{\lambda_v^2}{\|S_\blambda e_v\|^2} \\
&=\frac{\lambda_{v_2}^2}{2}+\sum_{v\in \ce_2}\lambda_v^2\\
&=\frac{1}{3} + \frac{2}{3}= 1.
\end{align*}
By now, according to \THM{k-hypo}, we have proven that $S_\blambda$ is hyponormal.

The last thing we have to prove is that $S_\blambda^2$ is not hyponormal. 
Note that if $\|S_\blambda e_v\|^2= c$ for every $v\in\Des(u)$ then also
$\|S_\blambda^k e_v\|^2=c^k$ for $k\geq 0$ and $v\in\Des(u)$.
In our case, by {\small (W2)} and {\small (W3)} we obtain
\begin{align*}\textstyle
\|S_\blambda^2 e_v\|&=4 \text{ for } v\in \Des(v_2),\\
\|S_\blambda^2 e_v\|&=1 \text{ for } v\in V_2.
\end{align*}
With the above equalities we can proceed to compute $\hip{2}(v_0)$.
First note that $\Chif(u)\subset V_2$ for $u\in \ce_1\cup \ce_2$ and $\beta\leq \frac{1}{2}$.
\begin{align*}
\hip{2}(v_0)
&=\sum_{v\in \Chif[2](v_0)}\frac{\lambda_v^2 \lambda_{\parf(v)}^2}{\|S_\blambda^2 e_v\|^2}\\
&=\lambda_{v_1}^2\sum_{v\in \Chif(v_1)}\frac{\lambda_v^2}{\|S_\blambda^2 e_v\|^2} +
\sum_{u\in \ce_1}\lambda_u^2 \sum_{v\in \Chif(u)}\frac{\lambda_v^2}{\|S_\blambda^2 e_v\|^2}\\
&=\lambda_{v_1}^2\biggl(\frac{\lambda_{v_2}^2}{\|S_\blambda^2 e_{v_2}\|^2}+ \sum_{v\in \ce_2}\frac{\lambda_v^2}{\|S_\blambda^2 e_v\|^2}\biggr) + \sum_{u\in \ce_1}\lambda_u^2 \|S_\blambda e_u\|^2\\
&=\lambda_{v_1}^2\left(\frac{1}{6}+ \frac{2}{3}\right) + \sum_{u\in \ce_1}\lambda_u^2\\
&=\frac{4}{3}(1-\beta)\frac{5}{6} + \beta = \frac{10-\beta}{9} \geq \frac{19}{18}>1.
\end{align*}
By \THM{k-hypo}, this completes the proof.
\end{proof}
\begin{rem}{leafless} The assumptions of leaflessness and local countability
are required for the existence of a proper and bounded hyponormal weighted shift (cf.\ \THM{k-hypo}).
From Proposition~\ref{pro:proper} we know that every bounded hyponormal weighted shift is
actually proper on some directed forest, which consequently is locally countable and leafless.

Without demanding properness one can e.g.\ add a single leaf to the linear tree.
Every hyponormal weighted shift on such a tree requires a zero weight
for the leaf (according to \THM{k-hypo} there can be no leaves after removing edges
with zero weights) and is actually an orthogonal sum of a classical bilateral
weighted shift and the zero operator on a one dimensional space.
Hence it is power hyponormal as long as it is hyponormal.
The aforementioned directed tree appears in \cite[Remark 4.3]{normal-ext}.
\end{rem}


The last result regarding power hyponormality we are going to present
shows that the existence of a backward extensions is an intrinsic property.
The fact that a family of weighted shifts admits a power hyponormal extension
onto the rooted sum of underlying trees does not depend on any interrelation
between the members of the family (except the obvious requirement of uniform
boundedness). For the notion of rooted sum of directed trees needed
in the theorem below we refer to \DEFN{rooted-sum}.

\begin{thm}{rootsum-powerhypo}
Let $J$ be at most countable set of indices, $\Tree_j=(V_j,\parf\rest{V_j})$ be a family
of pairwise disjoint rooted directed trees indexed by $j\in J$
and $\{\lambda_v\}_{v\in \bigcup_{j\in J}V_j}$ be complex numbers such that
$S_j:=S_{\blambda^\to_{\omega_j}}$ with $\omega_j:=\omega_{\Tree_j}$ is a proper bounded weighted shift on $\Tree_j$ for every $j\in J$.
Then \NWSR
\begin{abece}
\item the weights $\{\tilde\lambda_{\omega_j}\}_{j\in J}$ can be chosen such that
$S_\tlambda$ is a proper power hyponormal weighted shift
on the rooted sum of $\Tree_j$'s, where 
$\tlambda =\{\tilde\lambda_v\}_{v\in V_J}$, $\tilde\lambda_\omega=0$,
$V_J=\{\omega\}\sqcup\bigcup_{j\in J}V_j$, $\tilde\lambda_v=\lambda_v$ for $v\in V_j^\circ$
and $j\in J$,
\item $\sup\{ \|S_j\|:j\in J\}<\infty$ and for every $j\in J$
the weighted shift $S_j$ on $\Tree_j$ admits power hyponormal $1$-step backward extension.
\end{abece}
\end{thm}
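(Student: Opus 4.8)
The plan is to characterise power hyponormality of the candidate operator through \COR{power-hypo}: writing $\Tree:=\rootsum_{j\in J}\Tree_j$ for the rooted sum and $\omega$ for its root, $S_\tlambda$ (once bounded and proper) is power hyponormal precisely when $\Tree$ is leafless and $\hip{k}(v)\le 1$ for all $v\in V_J$ and $k\in\N$. The crucial preliminary observation is that for every vertex $v\neq\omega$ the number $\hip{k}(v)$ is \emph{local}: the set $\Des_\Tree(v)$ lies in a single $V_j$, so all the weight products $\lambda^{(k)}_u$ and forward norms $\|S^k e_u\|$ entering \eqref{eq:k-hypo} depend only on the weights of $S_j$. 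Consequently $\hip{k}(v)$ computed in $\Tree$ agrees with the value computed for $S_j$ alone, and the same holds for the $1$-step backward extension $(\Tree_j)\sqlow{1}$; likewise $\Tree$ is leafless iff every $\Tree_j$ is. Thus everything reduces to the single global vertex $\omega$.

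Since $\Chif(\omega)=\{\omega_j\}_{j\in J}$, \LEM{tree-basics}~\eqref{eq:dzieci} gives $\Chif[k](\omega)=\bigsqcup_{j\in J}\Chif[k-1](\omega_j)$, and for $u\in\Chif[k-1](\omega_j)$ the length-$k$ path from $\omega$ to $u$ traverses the edge $\omega\to\omega_j$, whence $\lambda^{(k)}_u=\tilde\lambda_{\omega_j}\lambda^{(k-1)}_u$ with $\lambda^{(k-1)}_u$ and $\|S^k e_u\|$ intrinsic to $S_j$. Setting
\[ q_j^{(k)}:=\sum_{u\in\Chif[k-1](\omega_j)}\frac{|\lambda^{(k-1)}_u|^2}{\|S_j^k e_u\|^2}, \]
a quantity determined by $S_j$ only, this produces the decoupling identity
\[ \hip{k}(\omega)=\sum_{j\in J}|\tilde\lambda_{\omega_j}|^2\, q_j^{(k)},\qquad k\in\N. \]
The identical computation carried out at the root of $(\Tree_j)\sqlow{1}$, where the weight on the edge above $\omega_j$ is some $\mu_j\neq0$, gives the value $|\mu_j|^2 q_j^{(k)}$. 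Hence, by \COR{power-hypo}, $S_j$ admits a power hyponormal $1$-step backward extension with weight $\mu_j$ exactly when $S_j$ is power hyponormal and $|\mu_j|^2 q_j^{(k)}\le1$ for all $k$.

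For \textup{(a)}$\Rightarrow$\textup{(b)} assume $S_\tlambda$ is proper and power hyponormal. Each $S_j=S_\tlambda\rest{\eld{V_j}}$ is the restriction of $S_\tlambda$ to the invariant subspace $\eld{V_j}$ (Proposition~\ref{pro:shift-basic}~(\ref{shbs:Des-inv})), hence power hyponormal, and $\|S_j\|\le\|S_\tlambda\|$, so $\sup_j\|S_j\|<\infty$. Properness forces $\mu_j:=\tilde\lambda_{\omega_j}\neq0$, and with this weight the value of $\hip{k}$ at the root of $(\Tree_j)\sqlow{1}$ equals $|\tilde\lambda_{\omega_j}|^2 q_j^{(k)}\le\sum_{i\in J}|\tilde\lambda_{\omega_i}|^2 q_i^{(k)}=\hip{k}(\omega)\le1$; the remaining bounds are local, the extension is leafless, and it is bounded since its weights are bounded by $\|S_\tlambda\|$. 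Thus $S_j$ admits a power hyponormal $1$-step backward extension.

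The substance of the theorem is \textup{(b)}$\Rightarrow$\textup{(a)}: one must choose the root weights $\tilde\lambda_{\omega_j}$ so that the \emph{single} sequence $\bigl(\hip{k}(\omega)\bigr)_{k}$ stays below $1$ while $S_\tlambda$ remains bounded. Fix weights $\mu_j\neq0$ witnessing \textup{(b)}, so $|\mu_j|^2 q_j^{(k)}\le1$ for every $k$; the case $k=1$ reads $|\mu_j|^2\le\|S_j e_{\omega_j}\|^2\le M^2$ with $M:=\sup_j\|S_j\|$. Because $J$ is at most countable we may pick nonzero scalars $c_j$ with $\sum_{j\in J}|c_j|^2\le1$ and set $\tilde\lambda_{\omega_j}:=c_j\mu_j$. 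Then $|\tilde\lambda_{\omega_j}|^2 q_j^{(k)}=|c_j|^2\bigl(|\mu_j|^2 q_j^{(k)}\bigr)\le|c_j|^2$, so the decoupling identity yields $\hip{k}(\omega)\le\sum_{j\in J}|c_j|^2\le1$ for every $k$; the local bounds $\hip{k}(v)\le1$ for $v\neq\omega$ hold since each $S_j$ is power hyponormal, and $\Tree$ is leafless because the $\Tree_j$ are. Boundedness follows from Proposition~\ref{pro:shift-basic}~(\ref{shbs:bounded}): $\|S_\tlambda e_v\|^2=\|S_j e_v\|^2\le M^2$ for $v\in V_j$, while $\|S_\tlambda e_\omega\|^2=\sum_{j}|c_j|^2|\mu_j|^2\le M^2\sum_j|c_j|^2\le M^2$. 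Finally the only vanishing weight is $\tilde\lambda_\omega=0$, so $S_\tlambda$ is proper, and it is the sought proper bounded power hyponormal weighted shift. The main obstacle is precisely the simultaneous control of $\hip{k}(\omega)$ over all $k$; the decoupling identity reduces it to an $\ell^2$-budget among the $c_j$, and the uniform boundedness hypothesis is exactly what caps each $|\mu_j|$ and hence the weight at $\omega$.
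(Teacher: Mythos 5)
Your proof is correct and follows essentially the same route as the paper: both reduce everything to the root $\omega$ via the decoupling identity $\hip{k}(\omega)=\sum_{j}|\tilde\lambda_{\omega_j}|^2 q_j^{(k)}$ (the paper's version reads $\hip{k}(\omega)=\sum_j a_j^2\,\hip{k}\bigl(\omega_{(\Tree_j)\sqlow{1}}\bigr)$) and then rescale the witnessing extension weights by an $\ell^2$-summable family of nonzero scalars, with the converse obtained by restriction to the invariant subspaces $\eld{V_j}$ and by reading off the $j$-th summand. Your observation that $|\mu_j|^2\le\|S_je_{\omega_j}\|^2\le M^2$ follows from the $k=1$ bound is a small simplification over the paper, which instead imposes the extra summability condition $\sum_j a_j^2|\lambda_{\omega_j}|^2<\infty$ on the scaling factors to secure boundedness.
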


\begin{proof}
\pImp{b}{a} Assume that there exist weights $\{\lambda_{\omega_j} \}_{j\in J}$ such that
for every $j\in J$, the weighted shift $S_{\blambda_j}$ on $(\Tree_j)\sqlow{1}$ with weights
$\blambda_j:=\{\lambda_v\}_{v\in (V_j)\sqlow{1}}$ is proper and power hyponormal,
where $\lambda_{\omega_{(\Tree_j)\sqlow{1}}}=0$. We recall that 
$(V_j)\sqlow{1}= V_j\sqcup\bigl\{\omega_{(\Tree_j)\sqlow{1}}\bigr\}$ for $j\in J$ (cf.\ \DEFN{backward-ext}).

Pick any sequence of positive real numbers $\{a_j\}_{j\in J}$ such that
\begin{equation}\label{eq:rs-powh-szereg} \sum_{j\in J}a_j^2 \leq 1 \text{ and }
\sum_{j\in J} a^2_j|\lambda_{\omega_j}|^2 <\infty.\end{equation}
Set $\tilde\lambda_{\omega_j}=a_j\lambda_{\omega_j}$ for $j\in J$.
Then the weighted shift $S_\tlambda$ with weights as in (a) is bounded
as a result of square summability of $\tilde\lambda_{\omega_j}$'s
and uniform boundedness of $\{ S_j \}_{j\in J}$ (cf.\ Proposition~\ref{pro:shift-basic}~(\ref{shbs:bounded})).
Fix $k\in\N$.
Since $S_{\blambda_j}^k$ is hyponormal, we infer from \THM{k-hypo} that
$\hip{k}\bigl(\omega_{(\Tree_j)\sqlow{1}}\bigr)\leq 1$ for $j\in J$,
and consequently\footnote{Note that ``$\hip{k}(\omega)$'' is computed for $S_\tlambda$
on the directed tree $\rootsum_{j\in J}\Tree_j$,
while ``$\hip{k}\bigl(\omega_{(\Tree_j)\sqlow{1}} \bigr)$'' refers to the weighted shift
$S_{\blambda_j}$ on	${\Tree_j}\sqlow{1}$.}
\allowdisplaybreaks
\begin{align}\label{eq:power1ext.1}
\hip{k}(\omega)
&=\sum_{j\in J}\sum_{v\in \Chif[k-1](\omega_j)}\frac{|\tilde\lambda_v^{(k)}|^2}{\|S_\tlambda^k e_v\|^2}\\
\nonumber
&=\sum_{j\in J}\sum_{v\in \Chif[k]\left(\omega_{(\Tree_j)\sqlow{1}}\right)}
\frac{|a_j^2 \lambda_v^{(k)}|^2}{\|S_\blambda^k e_v\|^2}\\
\nonumber
&=\sum_{j\in J} a_j^2\, \hip{k}\bigl(\omega_{(\Tree_j)\sqlow{1}} \bigr)
\refoneq[\leq]{eq:rs-powh-szereg} 1.
\end{align}

Clearly, the condition $\hip{k}(v)\leq 1$ for $v\in V_j$ is preserved when passing
from $\Tree_j$ to $\rootsum_{j\in J}\Tree_j$. Hence, by \COR{power-hypo}, $S_\tlambda$ is power hyponormal.

\pImp{a}{b} Assume that the weights $\{\tilde\lambda_{\omega_j}\}_{j\in J}$ are given in such a way that
$S_\tlambda$ is power hyponormal (see (a)) and fix $j\in J$.
We claim that taking a system of weights $\blambda_j:=\{\lambda_v\}_{v\in (V_j)\sqlow{1}}$,
where $\lambda_{\omega_j}=\tilde\lambda_{\omega_j}$ and $\lambda_{\omega_{(\Tree_j)\sqlow{1}}}=0$,
we obtain a power hyponormal weighted shift $S_{\blambda_j}$ on $(\Tree_j)\sqlow{1}$.
Indeed, value $\hip{k}(v)$ for $v\in V_j$ is the same for $S_\tlambda$ and $S_{\blambda_j}$.
Moreover, $\hip{k}(\omega_{(\Tree_j)\sqlow{1}})\leq \hip{k}(\omega)$,
since $\hip{k}(\omega_{(\Tree_j)\sqlow{1}})$ equals the $j$-th summand in \eqref{eq:power1ext.1}.

We have proven that $S_j=S_{\blambda_{\omega_j}^\to}$ admits power hyponormal $1$-step backward extension
for $j\in J$. The family $\{S_j\}_{j\in J}$ is uniformly bounded because all its members
are restrictions of a bounded operator $S_\tlambda$ (see Subsection~\ref{sec:restrictions}).
\end{proof}

\section{Subnormality} 
This section is devoted to the problem whether a subnormal weighted shift on a directed tree
admits a subnormal backward extension. For the classical weighted shifts the problem was
addressed in \cite{curtoq}.

The following general characterisation of subnormality of bounded operators
is due to Lambert \cite{lambert}.

\begin{thm}{sub-general}
An operator $T\in \BO{\Hilb}$ is subnormal if and only if
$\{ \|T^n f\|^2 \}_{n=0}^\infty$ is a Stieltjes moment sequence for all $f \in \Hilb$.
\end{thm}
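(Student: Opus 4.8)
The plan is to prove the two implications separately, the forward direction being routine and the converse carrying the real content.

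For necessity, suppose $T$ is subnormal and let $N$ be a normal operator on a Hilbert space $\mathcal{K}\supseteq\Hilb$ with $\Hilb$ invariant and $N\rest{\Hilb}=T$. Then $T^nf=N^nf$ for every $f\in\Hilb$ and $n\geq 0$. The key algebraic fact is that normality gives $N^{\ast n}N^n=(N^\ast N)^n$ (a one-line induction using that $N$ commutes with $N^\ast N$). Hence, writing $A:=N^\ast N\geq 0$ and applying the spectral theorem to the positive operator $A$, with spectral measure $E_A$ supported in $[0,\infty)$, I would obtain
\[ \|T^nf\|^2=\langle N^{\ast n}N^nf,f\rangle=\langle A^nf,f\rangle=\int_0^\infty t^n\ud\mu_f(t),\qquad \mu_f(\cdot):=\langle E_A(\cdot)f,f\rangle. \]
Since $\mu_f$ is a positive finite Borel measure on $[0,\infty)$, the sequence $\{\|T^nf\|^2\}_{n}$ is a Stieltjes moment sequence.

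For sufficiency, assume each $\{\|T^nf\|^2\}_n$ is Stieltjes with representing measure $\mu_f$. First I would exploit boundedness: since $\|T^nf\|^2\leq\|T\|^{2n}\|f\|^2$, every $\mu_f$ may be taken supported in the compact interval $[0,\|T\|^2]$, so the associated Hausdorff moment problem is determinate and $\mu_f$ is unique. Uniqueness lets me polarise: setting $\mu_{f,g}:=\tfrac14\sum_{k=0}^3 i^k\mu_{f+i^kg}$ yields a sesquilinear family of complex measures with $\langle T^nf,T^ng\rangle=\int_0^{\|T\|^2} t^n\ud\mu_{f,g}(t)$. Each assignment $\Delta\mapsto\mu_{f,g}(\Delta)$ is then a bounded sesquilinear form, producing a positive operator-valued measure $E$ on $[0,\|T\|^2]$ with $E([0,\|T\|^2])=I$ and $\int t^n\ud E(t)=T^{\ast n}T^n$ in the weak sense.

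The final and hardest step is to manufacture the normal extension. My target is the minimal normal extension, on which normality and invariance of $\Hilb$ force the inner products $\langle N^{\ast j}x,N^{\ast k}y\rangle=\langle T^kx,T^jy\rangle$; a GNS-type completion of $\mathrm{span}\{N^{\ast k}x\}$ against this form produces $N$ provided the form is positive semidefinite, which is precisely the Bram--Halmos positivity $\sum_{j,k}\langle T^kf_j,T^jf_k\rangle\geq 0$. The main obstacle is therefore to derive this full operator positivity, involving \emph{mixed} powers of $T$, from the scalar moment data, which controls only equal powers $\langle T^nf,T^ng\rangle$ and hence sees moduli but not phases. The plan here is to dilate the operator-valued measure $E$ via Naimark's theorem to a genuine spectral measure $F$ on an enlarged space, and then to use $T$ itself, through its polar structure, to supply the missing angular information and so verify the Bram--Halmos inequalities; feeding these back into the GNS construction yields a normal $N$ extending $T$ and completes the proof.
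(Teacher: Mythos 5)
This theorem is stated in the paper as a quoted result of Lambert (reference \cite{lambert}); the paper gives no proof of it, so there is no internal argument to compare yours against. Judged on its own terms, your necessity argument is complete and correct: for normal $N$ one has $N^{\ast n}N^{n}=(N^{\ast}N)^{n}$, and the spectral measure of $N^{\ast}N\geq 0$ localised at $f$ is a representing measure for $\{\|T^{n}f\|^{2}\}_{n}$. The first half of your sufficiency argument is also sound: boundedness forces the representing measures into $[0,\|T\|^{2}]$, hence determinacy, hence (by polarisation and uniqueness) a semispectral measure $E$ with $\int t^{n}\ud E(t)=T^{\ast n}T^{n}$ weakly.

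The gap is the final step, and it is the heart of Lambert's theorem. What you have established at that point is exactly Embry's moment condition, i.e.\ the positivity $\sum_{m,n}\bigl\langle T^{\ast(m+n)}T^{m+n}f_{m},f_{n}\bigr\rangle\geq 0$, equivalently the existence of the semispectral measure $E$. Passing from this to the Bram--Halmos positivity $\sum_{j,k}\langle T^{k}f_{j},T^{j}f_{k}\rangle\geq 0$ (or directly to a normal extension) is a genuine theorem --- Embry's generalisation of the Halmos--Bram criterion --- and your proposal does not prove it. The sentence about dilating $E$ by Naimark and then using ``the polar structure of $T$ to supply the missing angular information'' is a restatement of the difficulty, not a resolution of it: the Naimark dilation produces a positive operator $A$ on a larger space with $\|A^{n}h\|=\|T^{n}h\|$ for $h\in\Hilb$, but nothing in that construction interacts with the partial isometry in the polar decomposition of $T$, and no checkable inequality is derived from it. As written, the argument would stall precisely where the mixed-power inner products must be controlled. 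To close the proof you would need to either reproduce Embry's construction of the normal extension from the semispectral representation, or invoke her criterion explicitly as a known result.
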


We recall that $\{a_n\}_{n=0}^\infty \subseteq [0,\infty)$ is a \emph{Stielties moment sequence}
if there exists a~positive Borel measure $\mu$ on $[0,\infty)$ such that $a_n=\int_{[0,\infty)} x^n \ud\mu(x)$
for every $n\geq 0$. Such $\mu$ is called a \emph{representing measure}
for the Stielties moment sequence $\{a_n\}_{n=0}^\infty$. It is easy to see that
\begin{property}{zera}
if $\{a_n\}_{n=0}^\infty$ is a Stielties moment sequence
such that $a_k=0$ for some $k\geq 1$, then $a_n=0$ for every $n\geq 1$.
\end{property}

Observe that in the case of \THM{sub-general}, we are interested in probability measures only ($a_0=1$).

Straight from the definition and properties of positive measures we can see that
the family of all Stielties moment sequences is a convex cone.

It is a well-known fact that if $\{a_n\}_{n=0}^\infty$ is a Stielties moment sequence,
so is $\{a_{n+k}\}_{n=0}^\infty$ for any $k\in\ZP$. Indeed, observe that if
$\mu$ is a representing measure for $\{a_n\}_{n=0}^\infty$ then $\mu '$
defined by $\mu '(A):=\int_A x^k\ud\mu(x)$ for $A\in \Borel([0,\infty))$ is a representing
measure for $\{a_{n+k}\}_{n=0}^\infty$.

Finding a backward extension of a given Stielties moment sequence is not always possible, as
described by the following lemma which is a generalisation of \cite[Lemma~6.1.2]{szifty}.
Below, for simplicity we abbreviate $\int_{[0,\infty)}$ to~$\int_0^\infty$.

\begin{lem}{moment-ext}
Let $\{a_n\}_{n=0}^\infty$ be a~Stielties moment sequence with $a_0\neq 0$ and $k$ be a positive integer.
Then there exist real numbers $a_{-1},a_{-2},\ldots,a_{-k}$ with $a_{-k}=1$
such that $\{a_{n-k}\}_{n=0}^\infty$ is a Stielties moment sequence
if and only if
there exists a representing measure $\mu$ for
$\{a_n\}_{n=0}^\infty$ such that\footnote{We
follow the convention that $\frac{1}{0}=\infty$. In particular
$\int_0^\infty \frac{1}{x^k} \ud\mu(x) <\infty$ implies $\mu(\{0\})=0$.}
\[ \int_0^\infty \frac{1}{x^k} \ud\mu(x)\leq 1.\]

Moreover, if this is the case, then all the numbers $\{a_n\}_{n=-k}^\infty$ are positive.
\end{lem}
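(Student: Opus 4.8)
The plan is to reformulate everything in terms of representing measures, exploiting the correspondence between a measure $\rho$ on $[0,\infty)$ and the measure $\mu$ obtained from it by the density $x^k$, i.e.\ $\ud\mu=x^k\,\ud\rho$. Throughout I would write $b_n:=a_{n-k}$ for the candidate backward-extended sequence, so that the assertion to be proved is that $\{b_n\}_{n=0}^\infty$ is a Stielties moment sequence with $b_0=a_{-k}=1$. Since the $0$-th moment of a measure is exactly its total mass, the normalisation ``$b_0=1$'' says precisely that the representing measure of $\{b_n\}_{n=0}^\infty$ is a probability measure.

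For the forward implication, suppose $\{b_n\}_{n=0}^\infty$ is Stielties with $b_0=1$ and let $\rho$ be a representing probability measure. I would set $\mu(A):=\int_A x^k\,\ud\rho(x)$ for $A\in\Borel([0,\infty))$. Then $\int_0^\infty x^n\,\ud\mu=\int_0^\infty x^{n+k}\,\ud\rho=b_{n+k}=a_n$, so $\mu$ represents $\{a_n\}_{n=0}^\infty$. Moreover $\mu(\{0\})=0$, the integrand $x^k$ vanishing at $0$, whence $\int_0^\infty \frac{1}{x^k}\,\ud\mu=\int_{(0,\infty)}\frac{1}{x^k}x^k\,\ud\rho=\rho((0,\infty))\leq\rho([0,\infty))=b_0=1$, as required.

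For the converse, assume $\mu$ represents $\{a_n\}_{n=0}^\infty$ and $\int_0^\infty\frac{1}{x^k}\,\ud\mu\leq 1$; the finiteness of this integral forces $\mu(\{0\})=0$. Put $c:=1-\int_0^\infty\frac{1}{x^k}\,\ud\mu\geq 0$ and define the positive measure $\rho(A):=\int_{A\cap(0,\infty)}\frac{1}{x^k}\,\ud\mu(x)+c\,\delta_0(A)$. The atom $c\,\delta_0$ is inserted solely to force the total mass $\rho([0,\infty))=\int_0^\infty\frac{1}{x^k}\,\ud\mu+c=1$, so that $b_0:=\int_0^\infty\ud\rho=1$. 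For $n\geq 0$ the atom contributes nothing (as $x^{n+k}$ vanishes at $0$) and $\int_0^\infty x^{n+k}\,\ud\rho=\int_{(0,\infty)}x^n\,\ud\mu=a_n$, so $\{b_n\}_{n=0}^\infty$ with $b_n:=\int_0^\infty x^n\,\ud\rho$ is a Stielties moment sequence backward-extending $\{a_n\}$ by $k$ steps with $a_{-k}=b_0=1$. The point needing care is the finiteness of the intermediate moments $b_m=\int_{(0,\infty)}x^{m-k}\,\ud\mu$ for $0<m<k$; this I would obtain from the pointwise bound $x^{m-k}\leq x^{-k}+1$ on $(0,\infty)$ (valid since $0\leq k-m\leq k$), giving $\int_{(0,\infty)}x^{m-k}\,\ud\mu\leq\int_0^\infty\frac{1}{x^k}\,\ud\mu+a_0<\infty$.

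Finally, for the positivity of $\{a_n\}_{n=-k}^\infty$ I would not argue with measures but invoke \eqref{eq:zera} directly: the extended sequence $\{b_n\}_{n=0}^\infty$ is Stielties and $b_k=a_0\neq 0$. If some $b_m=0$ with $m\geq 1$, then \eqref{eq:zera} would force $b_n=0$ for all $n\geq 1$, contradicting $b_k\neq 0$; hence $b_n>0$ for every $n\geq 1$, while $b_0=1>0$ too. Translating back, $a_n=b_{n+k}>0$ for all $n\geq -k$. The only genuinely delicate steps are the bookkeeping with the atom at $0$ under the convention $\frac{1}{0}=\infty$ and the finiteness of the intermediate negative moments; everything else reduces to a direct computation under the change of measure $\ud\mu=x^k\,\ud\rho$.
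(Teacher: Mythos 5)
Your proposal is correct and follows essentially the same route as the paper: both directions are handled by the change of measure $\ud\mu=x^k\ud\rho$, with an atom at $0$ inserted in the converse to normalise the total mass to $1$, and the positivity claim is deduced from \eqref{eq:zera}. Your bound $x^{m-k}\leq x^{-k}+1$ for the intermediate moments is just a repackaging of the paper's split $\int_0^1 x^{-k}\ud\mu+\mu([1,\infty))$, so there is nothing substantively different.
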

\begin{proof}
\pImp{}{}
Let $\mu_k$ be a representing measure for $\{a_{n-k}\}_{n=0}^\infty$. Define the Borel measure
$\mu$ by $\mu(A):=\int_A x^k\ud\mu_k(x)$ for $A\in\Borel([0,\infty))$.
Then
\[ \int_0^\infty x^n\ud\mu(x) = \int_0^\infty x^{n+k}\ud\mu_k(x)
= a_{(n+k)-k} = a_n,\quad n\geq 0,\]
hence $\mu$ is a representing measure for $\{a_n\}_{n=0}^\infty$.
Knowing that $\mu(\{0\})=0$ we get 
\begin{align*}
\int_0^\infty \frac{1}{x^k} \ud\mu(x) &=\int_{(0,\infty)} \frac{1}{x^k} \ud\mu(x)\\
&= \int_{(0,\infty)} \frac{x^k}{x^k}\ud\mu_k(x)\leq \int_0^\infty x^0 \ud\mu_k(x)
=a_{-k}= 1.
\end{align*}

\lImp{}{}
Given the appropriate $\mu$, define the measure $\mu_k$ as follows:
\[\mu_k(A):=\int_A\frac{1}{x^k} \ud\mu(x) +
\Bigl(1-\int_0^\infty \frac{1}{x^k} \ud\mu(x)\Bigr)\delta_0(A),
\quad A\in\Borel([0,\infty)).\]
If follows from the assumption that $\mu(\{0\})=0$ and $\mu_k$ is a well-defined probability measure. Then for $j=1,\ldots,k-1$ we get
\begin{align*}
a_{-j}:=\int_0^\infty x^{k-j} \ud\mu_k(x) &=\int_0^\infty x^{-j}\ud\mu(x)\\
&\leq \int_0^1 x^{-k}\ud\mu(x)+\mu([1,\infty))<\infty.
\end{align*}
The required equality $a_{-k}=1$ follows from the definition of $\mu_k$.
For $n\geq k$ we obtain:
\[\int_0^\infty x^n \ud\mu_k(x) = \int_0^\infty x^{n-k} \ud\mu(x) = a_{n-k}.\]
Hence $\mu_k$ is a representing measure for $\{a_{n-k}\}_{n=0}^\infty$.

The ``moreover'' part is a consequence of \eqref{eq:zera}.
This completes the proof.
\end{proof}

The representing measure for a Stielties moment sequence may not be unique. 
Sequences for which there exists precisely one such measure are called \emph{determinate}.
The following sufficient condition is valid.
\begin{property}{mom-det}
If $\{a_n\}_{n=0}^\infty$ is a Stielties moment sequence that satisfy inequality
$\limsup_{n\to \infty} {a_{2n}}^\frac{1}{2n}<\infty$, then it is determinate.
\end{property}

This condition is sufficient also for determinacy of more general \emph{Hamburger moment sequences}
that are not discussed here. For further reading on moments see \cite{simon}, \cite{fuglede},
\cite{momenty} and \cite[Theorem 2]{fhs-momenty}.

From our point of view the most important corollary to the above is the following fact.
\begin{property}{op-mom-det}
For $T\in\BO{\Hilb}$ and $f\in\Hilb$, if $\{\|T^n f\|^2\}_{n=0}^\infty$ is a Stielties moment sequence
then it has a unique representing measure on $[0,\infty)$.
\end{property}

Indeed, since
\[\limsup_{n\to \infty} \bigl(\|T^{2n} f\|^2\bigr)^\frac{1}{2n}\leq \|T\|^2\|f\|^2 < \infty,\]
we conclude from \eqref{eq:mom-det} that the sequence in question is determinate.

Given a weighted shift $S_\blambda\in \BO{\eld{V}}$ on a directed forest
and an arbitrary vector $\sum_{v\in V} a_v e_v\in \eld{V}$ we have
\[\bigl\|S_\blambda \sum_{v\in V} a_v e_v\bigr\|^2
	= \sum_{v\in V} |a_v|^2 \|S_\blambda e_v\|^2.\]
Analogous formula holds also for an arbitrary power of the given weighted shift
(cf.\ \LEM{powers} and \LEM{tree-basics}~(\ref{tbs:child-sep})).
This leads to a simplified characterisation of subnormality for bounded weighted shifts.

\begin{thm}{sub-shift} Let $S_\blambda$ be a bounded weighted shift on a directed forest
$\Tree=(V,\parf)$. Then $S_\blambda$ is subnormal if and only if $\{ \|S_\blambda^n e_v\|^2 \}_{n=0}^\infty$
is a Stielties moment sequence for every vertex $v\in V$.
\end{thm}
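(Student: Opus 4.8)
The plan is to deduce both implications from Lambert's criterion (\THM{sub-general}), the only nontrivial work lying in the passage from the basis vectors to an arbitrary vector in the reverse direction. The forward implication is immediate: if $S_\blambda$ is subnormal, then by \THM{sub-general} the sequence $\{\|S_\blambda^n f\|^2\}_{n=0}^\infty$ is a Stielties moment sequence for \emph{every} $f\in\eld{V}$, and in particular for each $f=e_v$, $v\in V$.

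For the reverse implication, suppose $\{\|S_\blambda^n e_v\|^2\}_{n=0}^\infty$ is a Stielties moment sequence for every $v\in V$, and fix a representing measure $\mu_v$ for each of these sequences. Since $\|S_\blambda^0 e_v\|^2=\|e_v\|^2=1$, each $\mu_v$ is in fact a probability measure on $[0,\infty)$. The key structural fact I would invoke is the Pythagorean-type identity recorded just before the theorem, which rests on \LEM{powers} together with the disjointness of the sets $\Chif[n](v)$ over distinct $v$ (see \LEM{tree-basics}~(\ref{tbs:child-sep})): for $f=\sum_{v\in V}a_v e_v\in\eld{V}$ and every $n\geq 0$,
\[ \|S_\blambda^n f\|^2 = \sum_{v\in V}|a_v|^2\,\|S_\blambda^n e_v\|^2. \]
Because $\sum_{v\in V}|a_v|^2=\|f\|^2<\infty$, only countably many $a_v$ are nonzero, so
\[ \mu := \sum_{v\in V}|a_v|^2\,\mu_v \]
is a well-defined positive Borel measure on $[0,\infty)$ of finite total mass $\|f\|^2$.

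By monotone convergence (every integrand being nonnegative) the order of summation and integration may be interchanged, giving
\[ \int_0^\infty x^n\ud\mu(x) = \sum_{v\in V}|a_v|^2\int_0^\infty x^n\ud\mu_v(x) = \sum_{v\in V}|a_v|^2\,\|S_\blambda^n e_v\|^2 = \|S_\blambda^n f\|^2 \]
for every $n\geq 0$. Thus $\mu$ represents $\{\|S_\blambda^n f\|^2\}_{n=0}^\infty$, which is therefore a Stielties moment sequence; since $f$ was arbitrary, \THM{sub-general} yields the subnormality of $S_\blambda$. The only point demanding care in the reverse direction is the measure-theoretic bookkeeping — verifying that the weighted sum of the $\mu_v$ is a genuine countably additive finite measure and that summation commutes with integration — but as every quantity involved is nonnegative this is routine via Tonelli's theorem. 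I expect no deeper obstacle: the essential content is the orthogonality of $S_\blambda^n e_v$ for distinct $v$, which reduces the moment condition for all of $\eld{V}$ to the condition on the individual basis vectors.
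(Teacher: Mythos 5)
Your proof is correct and follows the same route the paper takes: the paper records exactly the Pythagorean identity $\|S_\blambda^n f\|^2=\sum_{v}|a_v|^2\|S_\blambda^n e_v\|^2$ (via \LEM{powers} and \LEM{tree-basics}~(b)) immediately before the statement and then appeals to Lambert's criterion, deferring the remaining details to \cite[Theorem 6.1.3]{szifty}. Your construction of the representing measure $\mu=\sum_v|a_v|^2\mu_v$ and the Tonelli interchange are precisely those details, carried out correctly.
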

For more details see Theorem 6.1.3 in \cite{szifty}.

Further in this section we will use the following fact, whose standard measure-theoretic
proof will be skipped. 
\begin{lem}{measure}
Let $\{\mu_j\}_{j\in J}$ be a family of probability measures on a fixed measure
space $(X, \mathcal A)$ and $\sum_{j\in J} a_j$ be a convergent series of non-negative
real numbers. Then $\tilde\mu$ defined by
\[ \tilde\mu(A):=\sum_{j\in J} a_j\mu_j(A),\quad A\in\mathcal A, \]
is a finite measure on $(X, \mathcal A)$
and for every $\mathcal A$-measurable function $f\colon X\to [0,+\infty]$
the following equality holds
\begin{equation}\label{eq:measure}
\int_X f \ud\tilde\mu = \sum_{j\in J} a_j \int_X f \ud\mu_j.
\end{equation}
\end{lem}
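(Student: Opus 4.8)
The plan is to run the standard ``indicators $\to$ simple functions $\to$ non-negative measurable functions'' bootstrap, with every interchange of summation and limit justified purely by non-negativity. First I would observe that convergence of the series $\sum_{j\in J} a_j$ of non-negative reals forces $a_j=0$ for all but countably many indices $j$. Since a vanishing term contributes nothing either to $\tilde\mu$ or to the right-hand side of \eqref{eq:measure}, I may discard such indices and assume without loss of generality that $J$ is at most countable; this disposes of any concern about genuinely uncountable sums.

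Next I would verify that $\tilde\mu$ is a finite measure. Non-negativity and $\tilde\mu(\emptyset)=0$ are immediate from the definition. For countable additivity, given pairwise disjoint $\{A_i\}_i\subseteq\mathcal A$, I would write $\tilde\mu\bigl(\bigcup_i A_i\bigr)=\sum_{j\in J}a_j\sum_i\mu_j(A_i)$ and swap the order of the two summations; this is legitimate precisely because all summands are non-negative (Tonelli's theorem for double series, or equivalently Tonelli against counting measures on $J$ and on the index $i$). Finiteness then follows from $\tilde\mu(X)=\sum_{j\in J}a_j\mu_j(X)=\sum_{j\in J}a_j<\infty$, each $\mu_j$ being a probability measure.

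For the integral identity I would proceed in three stages. When $f=\chi_A$ for $A\in\mathcal A$, equation \eqref{eq:measure} is nothing but the defining formula for $\tilde\mu(A)$. By linearity of the integral and of finite sums, it then holds for every non-negative simple $\mathcal A$-measurable function. For a general measurable $f\colon X\to[0,+\infty]$ I would fix an increasing sequence of non-negative simple functions $s_n\uparrow f$ and apply the monotone convergence theorem: on the left, $\int_X s_n\ud\tilde\mu\to\int_X f\ud\tilde\mu$, while on the right, $\int_X s_n\ud\mu_j\to\int_X f\ud\mu_j$ for each $j\in J$. The simple-function case supplies $\int_X s_n\ud\tilde\mu=\sum_{j\in J}a_j\int_X s_n\ud\mu_j$ for every $n$, so it only remains to pass the limit in $n$ through the sum over $j$.

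The one step deserving real attention is this last interchange of $\lim_n$ and $\sum_{j}$, but it too is governed by monotonicity: the quantities $a_j\int_X s_n\ud\mu_j$ increase with $n$, so $\lim_n\sum_{j\in J}a_j\int_X s_n\ud\mu_j=\sum_{j\in J}a_j\lim_n\int_X s_n\ud\mu_j$ by the monotone convergence theorem for series of non-negative terms (again Tonelli, now applied to counting measure on $J$ against the sequence index $n$). Combining the two monotone limits yields \eqref{eq:measure}. I do not expect any genuine obstacle here; the entire argument is powered by the sign condition, and the only conceptual point worth flagging is the reduction to countable $J$ at the outset.
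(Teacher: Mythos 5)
Your proof is correct and is precisely the ``standard measure-theoretic proof'' that the paper explicitly declines to write out (it states the lemma and skips the argument). The reduction to countable $J$, the Tonelli-based verification of countable additivity, and the indicator--simple--monotone-convergence bootstrap with the final interchange of $\lim_n$ and $\sum_j$ justified by monotonicity are all sound, so your write-up simply supplies the omitted details.
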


Combining \THM{sub-shift} and \LEM{moment-ext} we can characterise whether a weighted shifts
on directed tree admits subnormal $k$-step backward extension.
\begin{thm}{extSub-shift} 
For a subnormal weighted shift $S_\blambda\in \BO{\eld{V}}$ on a directed tree $\Tree=(V,\parf)$
with root $\omega$ and $k\geq 1$ \NWSR\begin{abece}
\item $S_\blambda$ admits subnormal $k$-step backward extension,
\item There exists a representing measure $\mu$ for the Stielties moment sequence $\{\|S_\blambda^n e_\omega\|^2\}_{n=0}^\infty$ such that $\int_0^\infty \frac{1}{t^k}\ud\mu(t) < \infty$.
\end{abece}\end{thm}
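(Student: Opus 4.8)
The plan is to transport the whole question to a single Stieltjes moment condition sitting at the new root, using Lambert's criterion in the form \THM{sub-shift}, and then to feed that condition into \LEM{moment-ext}. Denote by $\omega=\omega_0,\omega_1,\dots,\omega_k$ the new chain in $\Tree\sqlow{k}$ from \DEFN{backward-ext}, so that $\omega_k$ is the root of the extended tree and $\Chif(\omega_j)=\{\omega_{j-1}\}$ for $1\leq j\leq k$. Let $\blambda'$ be the weights of a candidate extension as in \DEFN{backward-ext-shift}, so $\lambda'_{\omega_k}=0$, the prefix weights $\lambda'_{\omega_0},\dots,\lambda'_{\omega_{k-1}}$ are nonzero, and $\lambda'_v=\lambda_v$ for $v\in V^\circ$. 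Put $b_n:=\|S_\blambda^n e_\omega\|^2$, let $c:=\prod_{i=0}^{k-1}\lambda'_{\omega_i}$, and set $d_n:=\|S_{\blambda'}^n e_{\omega_k}\|^2$. Walking down the chain with \eqref{eq:naBazowym} gives $S_{\blambda'}^{\,k}e_{\omega_k}=c\,e_\omega$, and since $\eld{\Des(\omega)}$ is invariant and carries the original weights, $S_{\blambda'}^{\,k+m}e_{\omega_k}=c\,S_\blambda^m e_\omega$. Thus $d_0=1$ and $d_{k+m}=|c|^2 b_m$ for $m\geq 0$, i.e.\ $\{d_n\}_{n=0}^\infty$ is precisely the $k$-step backward extension of $\{|c|^2 b_m\}_{m=0}^\infty$, automatically normalised by $d_0=1$.

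The next step is to see that the subnormality of $S_{\blambda'}$ is equivalent to $\{d_n\}_{n=0}^\infty$ being a Stieltjes moment sequence. By \THM{sub-shift} subnormality of $S_{\blambda'}$ is the requirement that $\{\|S_{\blambda'}^n e_v\|^2\}_n$ be a Stieltjes moment sequence for every $v\in V\sqlow{k}$. For $v\in V$ the subspace $\eld{\Des(v)}$ is unchanged and $S_{\blambda'}$ acts on it exactly as $S_\blambda$ (Proposition~\ref{pro:shift-basic}~(\ref{shbs:Des-inv})), so $\|S_{\blambda'}^n e_v\|^2=\|S_\blambda^n e_v\|^2$ and these sequences are already Stieltjes because $S_\blambda$ is subnormal. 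For $v=\omega_j$ with $1\leq j\leq k$ the same chain computation gives $S_{\blambda'}^{\,k-j}e_{\omega_k}=\bigl(\prod_{i=j}^{k-1}\lambda'_{\omega_i}\bigr)e_{\omega_j}$, so $\{\|S_{\blambda'}^n e_{\omega_j}\|^2\}_n$ is a positive multiple of the $(k-j)$-fold forward shift of $\{d_n\}_n$; as forward shifts and positive rescalings preserve the Stieltjes property, each of these is Stieltjes as soon as $\{d_n\}_n$ is. Hence only the condition at $\omega_k$ is genuine, and subnormality of $S_{\blambda'}$ amounts to $\{d_n\}_n$ being Stieltjes.

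It then remains to match this with \LEM{moment-ext} applied to $a_m:=|c|^2 b_m$ (which has $a_0=|c|^2\neq 0$ since the prefix weights are nonzero), under the identification $\{d_n\}=\{a_{n-k}\}$ with $a_{-k}=d_0=1$. For the implication (a)$\Rightarrow$(b), the given subnormal extension makes $\{a_{n-k}\}$ Stieltjes, so \LEM{moment-ext} produces a representing measure $\mu_a$ of $\{a_m\}$ with $\int_0^\infty \frac{1}{t^k}\ud\mu_a(t)\leq 1$; scaling back, $\mu:=|c|^{-2}\mu_a$ represents $\{b_n\}$ and satisfies $\int_0^\infty \frac{1}{t^k}\ud\mu(t)\leq|c|^{-2}<\infty$. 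For the converse (b)$\Rightarrow$(a), start from a representing measure $\mu$ of $\{b_n\}$ with $M:=\int_0^\infty \frac{1}{t^k}\ud\mu(t)<\infty$; since $\mu$ is a probability measure, $M>0$, so I would fix the single scale by setting $|c|^2:=1/M$, whereupon $|c|^2\mu$ represents $\{a_m\}$ with integral exactly $1$. \LEM{moment-ext} then yields positive backward values $a_{-1},\dots,a_{-k}$ (positivity by its ``moreover'' clause), from which I recover nonzero prefix weights via $|\lambda'_{\omega_{k-n}}|^2=a_{n-k}/a_{n-1-k}$ for $n=1,\dots,k$; these telescope consistently to $\prod_i|\lambda'_{\omega_i}|^2=a_0=|c|^2$. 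The resulting $S_{\blambda'}$ is bounded by Proposition~\ref{pro:shift-basic}~(\ref{shbs:bounded}) (only finitely many singleton-child terms are added), and $\{d_n\}_n$ is Stieltjes by construction, so the second step makes $S_{\blambda'}$ subnormal.

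The step I expect to be the real obstacle is reconciling the sharp normalised bound $\int_0^\infty \frac{1}{t^k}\ud\mu\leq 1$ demanded by \LEM{moment-ext} with the bare finiteness $<\infty$ in~(b): the mechanism that closes this gap is the one free overall scale $|c|^2$ of the prefix chain, so the crucial bookkeeping is to treat $c$ as the sole degree of freedom and let \LEM{moment-ext} fix the ratios of the remaining weights. A minor but necessary point is verifying that the recovered weights are genuinely nonzero and that the forward-shift reduction in the second step really eliminates the vertices $\omega_1,\dots,\omega_{k-1}$; both follow from the positivity assertion of \LEM{moment-ext} and the shift-stability of Stieltjes moment sequences recalled above.
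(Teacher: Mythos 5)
Your proof is correct and follows essentially the same route as the paper's: both reduce subnormality of the extension, via \THM{sub-shift}, to the single Stieltjes moment condition at the new root (the sequences at the old vertices being unchanged and those at $\omega_1,\dots,\omega_{k-1}$ being rescaled tails), and both close the gap between the normalised bound $\leq 1$ in \LEM{moment-ext} and the bare finiteness in (b) by exploiting the free overall scale of the prefix weights. The only cosmetic differences are that you fix that scale as exactly $1/M$ where the paper allows any $C\in(0,C_0^{-1}]$, and that in (a)$\Rightarrow$(b) you invoke \LEM{moment-ext} where the paper constructs the representing measure with density $t^k$ inline.
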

\begin{proof}
There is no loss of generality in assuming that $S_\blambda\neq0$.
\pImp{a}{b}
Let $S_{\blambda '}$ be a subnormal weighted shift on $\Tree\sqlow{k}$ extending $S_\blambda$.
Denote the root of $\Tree\sqlow{k}$ by $\omega_k$ (cf.\ \DEFN{backward-ext}).
By the subnormality of $S_{\blambda '}$ 
the sequence $\{\|S_{\blambda '}^n e_{\omega_k}\|^2\}_{n=0}^\infty$
is a Stilties moment sequence with some representing measure $\mu_k$.

Observe that $S_{\blambda '}^k e_{\omega_k} = \lambda'^{(k)}_\omega e_\omega \neq 0$, hence
\[ \|S_\blambda^n e_\omega\|^2 = \frac{\|S_{\blambda '}^{n+k}
 e_{\omega_k}\|^2}{\bigl|\lambda'^{(k)}_\omega\bigr|^2},\quad n\geq 0.\]
Then $\mu$ given by $\mu(A)=\int_A t^k\,|\lambda'^{(k)}_\omega|^{-2}\ud\mu_k(t)$ for $A\in\Borel([0,\infty))$
is a representing measure for $\{\|S_\blambda^n e_\omega\|^2\}_{n=0}^\infty$ such
that $\mu(\{0\})=0$ and
\[ \int_0^\infty \frac{1}{t^k}\ud\mu(t) = \int_{(0,\infty)} \frac{1}{t^k}\ud\mu(t)=
\int_{(0,\infty)} \frac{t^k}{t^k\bigl|\lambda'^{(k)}_\omega\bigr|^2}\ud\mu_k(t)\leq \frac{1}{\bigl|\lambda'^{(k)}_\omega\bigr|^2}<\infty.\]
This yields (b).

\pImp{b}{a}
Set $C_0:=\int_0^\infty \frac{1}{t^k}\ud\mu(t)$. Observe that $C_0>0$, because otherwise $\mu\equiv 0$. Fix an arbitrary $C\in (0, C_0^{-1}]$.
As stated by \LEM{moment-ext}, the Stielties moment sequence $\{ a_n \}_{n=0}^\infty$,
where $a_n= C\|S_\blambda^n e_\omega\|^2$ can be extended to the Stielties moment
sequence $\{a_{n-k} \}_{n=0}^\infty$ with positive terms in such a way that $a_{-k}=1$.

We can find weights $\lambda'_{\omega_l}$ for $l=0,\ldots,k-1$ such that
\begin{equation}\label{eq:back-ext1}
|\lambda'^{(j)}_{\omega_{k-j}}|^2 = \|S_{\blambda'}^j e_{\omega_k}\|^2= a_{j-k},\quad j=0,\ldots,k,
\end{equation}
where $\omega_0=\omega$, $\blambda'=\{\lambda'_v\}_{v\in V\sqlow{k}}$, $\lambda'_{\omega_k}=0$ and $\lambda'_v=\lambda_v$ for $v\in V^\circ$.
This can be done e.g.\ by defining $\lambda'_{\omega_l}:= \sqrt{\frac{a_{-l}}{a_{-l-1}}}$ for $l=0,\ldots,k-1$.
Clearly $S_\blambda \subset S_{\blambda'}$.

Observe that by \eqref{eq:back-ext1} we have
\[ |\lambda'^{(k)}_{\omega_0}|^2=a_0=C\|S^0_\blambda e_\omega\|^2=C,\]
and consequently
\[\|S_{\blambda '}^{k+n} e_{\omega_k}\|^2=
\|\lambda'^{(k)}_{\omega_0} S^n_{\blambda'}e_{\omega_0}\|^2=
C\|S_\blambda^n e_\omega\|^2=a_n,\quad n\geq 0.\]
Hence 
\[
	\|S_{\blambda '}^n e_{\omega_k}\|^2=a_{n-k},\quad n\geq 0,
\]
which implies that for $j=0,\ldots,k$ and $n\geq 0$,
\[ \|S_{\blambda '}^n e_{\omega_j}\|^2 =
\frac{\|S_{\blambda '}^n S_{\blambda '}^{k-j} e_{\omega_k}\|^2}{a_{-j}}=
\frac{a_{n-j}}{a_{-j}}. \]
It proves that $\{\|S_{\blambda '}^n e_{\omega_j}\|^2\}_{n=0}^\infty$ for $j=1,\ldots,k$ is a
Stielties moment sequence being a scaled tail of the sequence $\{a_{n-k}\}_{n=0}^\infty$.

Since the construction does not affect the sequence
$\{\|S_{\blambda '}^n e_v\|^2\}_{n=0}^\infty = \{\|S_\blambda^n e_v\|^2\}_{n=0}^\infty$ for $v\in V$,
we conclude from \THM{sub-shift} that the constructed
$k$-step backward extension is subnormal. 
\end{proof}

The following lemma can be seen as a generalisation of Lemma~6.1.10 from \cite{szifty}.
For the notion of rooted sum see \DEFN{rooted-sum}
and for the notation ``$\blambda_v^\to$'' see \eqref{eq:destree}.
\begin{lem}{rootsum-ext}
Let $J$ be a non-empty at most countable set and $k\in\ZP$.
For $j\in J$ let $S_j$ be a bounded weighted shift on a directed tree $\Tree_j=(V_j,\parf_j)$
with root $\omega_j$. Then \NWSR
\begin{abece}
\item there exists a system $\{ \theta_j \}_{j\in J}\subseteq \C\setminus\{0\}$
such that the weighted shift $S_\blambda$ on $\rootsum_{j\in J}\Tree_j=(V_J,\parf)$
with weights $\blambda =\{\lambda_v\}_{v\in V_J}$ satisfying
\begin{equation}\label{eq:weight-rest}
\lambda_{\omega_j}=\theta_j,\quad S_{\blambda^\to_{\omega_j}} = S_j,\quad j\in J,
\end{equation}
admits subnormal $k$-step backward extension,

\item $S_j$ admits subnormal $(k+1)$-step backward extension for each $j\in J$ and
$\sup\{\|S_j\|: j\in J\} <+\infty$.
\end{abece}
Moreover, if {\rm(a)} holds then $\|S_\blambda\|=\sup\{\|S_j\|: j\in J\}$.
\end{lem}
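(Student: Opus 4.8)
The plan is to reduce both implications to the single--tree criterion of \THM{extSub-shift}, linked by one structural identity. Write $\omega$ for the root of $\rootsum_{j\in J}\Tree_j$ and set $a_j:=|\theta_j|^2$. Since $\Chif(\omega)=\{\omega_j:j\in J\}$, the subspaces $\eld{V_j}=\eld{\Des(\omega_j)}$ are pairwise orthogonal and invariant for $S_\blambda$ (Proposition~\ref{pro:shift-basic}~(\ref{shbs:Des-inv})), $S_\blambda\rest{\eld{V_j}}=S_j$ by \eqref{eq:weight-rest}, and $S_\blambda e_\omega=\sum_{j\in J}\theta_j e_{\omega_j}$. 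As $S_\blambda^{n-1}e_{\omega_j}=S_j^{n-1}e_{\omega_j}\in\eld{V_j}$, the Pythagorean theorem gives the identity on which everything rests:
\[ \|S_\blambda^n e_\omega\|^2 = \sum_{j\in J} a_j\,\|S_j^{n-1} e_{\omega_j}\|^2,\qquad n\geq 1, \]
while $\|S_\blambda^0 e_\omega\|^2=1$. The rest is measure bookkeeping built from this.

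For (a)$\Rightarrow$(b): uniform boundedness is immediate, since each $S_j$ is a restriction of the bounded $S_\blambda$, so $\sup_j\|S_j\|\leq\|S_\blambda\|<\infty$. As $S_\blambda$ admits a subnormal $k$-step backward extension it is subnormal, hence so is each $S_j$; let $\mu_j$ and $\rho$ denote the $($unique, by \eqref{eq:op-mom-det}$)$ representing measures of $\{\|S_j^n e_{\omega_j}\|^2\}$ and $\{\|S_\blambda^n e_\omega\|^2\}$. The identity shows that the finite measure $\nu:=\sum_j a_j\mu_j$ $($finite because $\sum_j a_j=\|S_\blambda e_\omega\|^2<\infty)$ and $t\,\ud\rho$ have equal moments; being compactly supported they coincide, $t\,\ud\rho=\nu$. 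Since $a_j\mu_j\leq\nu=t\,\ud\rho$ and $\int_0^\infty t^{-k}\ud\rho<\infty$ by \THM{extSub-shift}, I obtain $\int_0^\infty t^{-(k+1)}\ud\mu_j\leq a_j^{-1}\int_0^\infty t^{-k}\ud\rho<\infty$, whence $S_j$ admits a subnormal $(k+1)$-step backward extension, again by \THM{extSub-shift}.

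For (b)$\Rightarrow$(a): put $M:=\sup_j\|S_j\|\in(0,\infty)$. Each $\mu_j$ is a probability measure supported in $[0,M^2]$ with $\mu_j(\{0\})=0$ and $c_j:=\int_0^\infty t^{-(k+1)}\ud\mu_j<\infty$; setting $b_j:=\int_0^\infty t^{-1}\ud\mu_j$ one checks $M^{-2}\leq b_j\leq M^{2k}c_j$, so $b_j$ is finite and bounded below. Using countability of $J$, choose $\epsilon_j>0$ with $\sum_j\epsilon_j=1$ and $\sum_j\epsilon_j c_j/b_j<\infty$, and set $a_j:=\epsilon_j/b_j$, $\theta_j:=\sqrt{a_j}$. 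Then $\sum_j a_j\leq M^2$, so $S_\blambda$ is bounded (Proposition~\ref{pro:shift-basic}~(\ref{shbs:bounded})); the measure $\nu:=\sum_j a_j\mu_j$ satisfies $\int_0^\infty t^{-1}\ud\nu=\sum_j a_j b_j=1$, so $\rho:=t^{-1}\,\ud\nu$ is a probability measure which, by the identity and \LEM{measure}, represents $\{\|S_\blambda^n e_\omega\|^2\}$. Thus that sequence is a Stieltjes moment sequence; as the sequences for $v\in V_j$ are those of the subnormal $S_j$, \THM{sub-shift} gives subnormality of $S_\blambda$. Finally $\int_0^\infty t^{-k}\ud\rho=\int_0^\infty t^{-(k+1)}\ud\nu=\sum_j\epsilon_j c_j/b_j<\infty$, so \THM{extSub-shift} yields (a).

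The main obstacle I expect is the atom at $0$: for $k\geq1$ the unique representing measure $\rho$ must satisfy $\rho(\{0\})=0$, since otherwise $\int_0^\infty t^{-k}\ud\rho=\infty$ and no $k$-step extension exists. This forces the normalization $\int_0^\infty t^{-1}\ud\nu=1$ $($i.e.\ $\sum_j\epsilon_j=1$$)$ exactly, which must be reconciled simultaneously with $\sum_j a_j<\infty$ and $\sum_j a_j c_j<\infty$; this is precisely why the weights are chosen proportional to $b_j^{-1}$ with $c_j/b_j$ controlled. For the ``moreover'' part, whenever (a) holds the computation $t\,\ud\rho=\nu$ shows $\rho$ is supported in $[0,M^2]$, so $\|S_\blambda e_\omega\|^2=\int_0^\infty t\,\ud\rho\leq M^2$; combined with $\|S_\blambda e_v\|=\|S_j e_v\|\leq M$ for $v\in V_j$ and $\|S_\blambda\|\geq\|S_j\|$ from restriction, Proposition~\ref{pro:shift-basic}~(\ref{shbs:bounded}) gives $\|S_\blambda\|=\sup_j\|S_j\|$.
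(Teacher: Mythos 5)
Your proof is correct and follows essentially the same route as the paper's: the key identity $\|S_\blambda^n e_\omega\|^2=\sum_j a_j\|S_j^{n-1}e_{\omega_j}\|^2$, the measure $\rho$ with $t\ud\rho=\sum_j a_j\mu_j$ normalised so that $\sum_j a_j\int t^{-1}\ud\mu_j=1$, and the transfer of the integrability conditions via \THM{extSub-shift} and uniqueness of representing measures all match the paper's argument (your choice $a_j=\epsilon_j/b_j$ is just a transparent repackaging of the paper's selection of coefficients). Your explicit treatment of the ``moreover'' norm equality is a small bonus the paper leaves largely implicit.
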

\begin{proof}
\pImp{b}{a} Assume that each of the weighted shifts $S_j$ admits subnormal $(k+1)$-step backward extension.
According to the uniform boundedness hypothesis there is no loss of generality 
in assuming that $\sup_{j\in J} \|S_j\|= 1$. Since each $S_j$ admits $1$-step
backward extension, it cannot be zero.

By \THM{extSub-shift} there exists a representing measure $\mu_j$
of the Stielties moment sequence $\{\|S_j^n e_{\omega_j}\|^2\}_{n=0}^\infty$ satisfying
$D_j:=\int_0^\infty \frac{1}{t^{k+1}}\ud\mu_j(t)<\infty$ for $j\in J$.
This measures are unique due to \eqref{eq:op-mom-det}.
Since each $\mu_j$ is a probability measure, we see that $C_j:=\int_0^\infty \frac{1}{t}\ud\mu_j(t)<\infty$
and $C_j>0$, $D_j>0$.

As a consequence, there exist positive real numbers $\{a_j\}_{j\in J}$ such that
\[
\mathrm{(*)}\ \sum_{j\in J} a_j D_j <\infty,
\qquad \mathrm{(**)}\  \label{eq:unitsum} \sum_{j\in J} a_j C_j=1. \]
Indeed, as $J$ is at most countable, there exist positive real numbers $\{a'_j\}_{j\in J}$
such that $\sum_{j\in J} a_j'=1$. Then, by setting $a_j:=a_j'\min\{1,C_j^{-1}, D_j^{-1}\}$
for $j\in J\setminus\{j_0\}$, where $j_0\in J$ is fixed, and defining 
\[ a_{j_0}:=\biggl(1-\sum_{j\in J\setminus\{j_0\}} a_j C_j\biggr)/C_{j_0}, \]
we obtain ($*$) and ($**$).

Observe that
\[ \int_0^\infty t \ud\mu_j(t) = \|S_j e_{\omega_j}\|^2 \leq \|S_j\|^2\leq 1,\quad j\in J .\]
Combined with
\[ \|S_j e_{\omega_j}\|^2+C_j = \int_0^\infty \Bigl(t+\frac{1}{t}\Bigr) \ud\mu_j(t)
\geq \int_0^\infty 2 \ud\mu_j(t) = 2, \quad j\in J,\]
this leads to $C_j \geq 1$ for $j\in J$. Hence we have
\begin{equation}\label{eq:roots-unit}
\sum_{j\in J}a_j \leq \sum_{j\in J}a_j C_j = 1.
\end{equation}

Define a Borel measure $\mu$ on $[0,\infty)$ by
\[ \mu(A):= \sum_{j\in J}a_j\int_A \frac{1}{t} \ud\mu_j(t),\quad A\in\Borel([0,\infty)).\]
It is a probability measure by ($**$).
Using \LEM{measure} we obtain
\begin{align}\nonumber
\int_0^\infty \frac{1}{t^k}\ud\mu(t)
&= \sum_{j\in J}a_j\int_0^\infty \frac{1}{t^k}\, \frac{1}{t}\ud\mu_j(t)\\
&= \sum_{j\in J}a_j\int_0^\infty \frac{1}{t^{k+1}}\ud\mu_j(t)
=\sum_{j\in J}a_j D_j \overset{(*)}{<} \infty. \label{eq:power-k}
\end{align}

Take $\{\theta_j\}_{j\in J}$ such that $|\theta_j|^2=a_j$ for $j\in J$.
Let $\blambda$ be as in \eqref{eq:weight-rest} with $\lambda_\omega=0$.

From \eqref{eq:roots-unit} we obtain $\|S_\blambda e_\omega\|\leq 1$. By the assumption 
we have $\|S_\blambda e_v\| =\|S_j e_v\|\leq 1$ for $j\in J$ and $v\in V_j$,
hence $\| S_\blambda \|\leq 1$ (cf.\ Proposition~\ref{pro:shift-basic}~\ref{shbs:bounded}).
Moreover, by \LEM{measure} we get
\begin{align*}
\|S_\blambda^n e_\omega\|^2 = \sum_{j\in J}a_j \|S_j^{n-1} e_{\omega_j}\|^2
&= \sum_{j\in J} a_j \int_{[0,\infty)} t^{n-1} \ud\mu_j(t)\\
&= \sum_{j\in J} a_j \int_{(0,\infty)} \frac{t^n}{t}\ud\mu_j(t)\\
&= \int_{[0,\infty)} t^n \ud\mu(t),\quad n\geq 1.
\end{align*}
Let us recall that $\mu_j(\{0\})=0$ for every $j\in J$.

For $n=0$ we have $\|S_\blambda^0 e_\omega\|^2=1=\int_0^\infty t^0 \ud\mu(t)$,
since $\mu$ is a probability measure.
Summing up, we have found a representing measure for the sequence
$\{ \|S_\blambda^n e_\omega\|^2 \}_{n=0}^\infty$. 
For $j\in J$, $v\in V_j$ and $n\in\ZP$ we have $\|S^n_\blambda e_v\|^2 = \|S_j^n e_v\|^2$,
and since $S_j$ is subnormal, $\{ \|S^n_\blambda e_v\|^2 \}_{n=0}^\infty$
is a Stielties moment sequence.
Applying \THM{sub-shift} we obtain that $S_\blambda$ is subnormal
and if $k=0$, we are done.

For $k\geq 1$, knowing \eqref{eq:power-k}, by \THM{extSub-shift}
we obtain that $S_\blambda$ admits subnormal $k$-step backward extension.

\pImp{a}{b}
Let $S_\blambda$ be a weighted shift on $\Tree:=\rootsum_{j\in J} \Tree_j$
admitting subnormal $k$-step backward extension and extending each of the weighted shifts $S_j$.
Denote by $\mu$ the representing measure of the Stielties moment sequence
$\{\|S_\blambda^n e_\omega\|^2\}_{n=0}^\infty$ .
From \THM{extSub-shift}
we know that
\begin{equation}\label{eq:roots-mi-k} \int_0^\infty \frac{1}{t^k} \ud\mu(t)<\infty. \end{equation}
(The case $k=0$ is covered by the definition of a representing measure.)

Let $\mu_j$ be the representing measure of $\{\|S_j^n e_{\omega_j}\|^2\}_{n=0}^\infty$ for $j\in J$.
According to \THM{extSub-shift}, it suffices to prove that
$\int_0^\infty \frac{1}{t^{k+1}} \ud\mu_j(t)<\infty$ for $j\in J$.

We can define the measure $\mu_0$ by the formula
\[ \mu_0(A) := {\textstyle \sum_{j\in J}}|\lambda_{\omega_j}|^2 \mu_j(A), \quad A\in\Borel([0,\infty)). \]
The measure $\mu_0$ is finite because the series $\sum_{j\in J}|\lambda_{\omega_j}|^2$ is convergent
and all $\mu_j$'s are probability measures.
Then
\begin{align}\nonumber
\|S_\blambda^{n+1} e_\omega\|^2 &= \sum_{j\in J}|\lambda_{\omega_j}|^2\|S_j^n e_{\omega_j}\|^2\\
&=\sum_{j\in J}|\lambda_{\omega_j}|^2\int_0^\infty t^n \ud\mu_j(t)
\refoneq{eq:measure} \int_0^\infty t^n \ud\mu_0(t),\quad n\geq 0.
\end{align}
This means that $\mu_0$ is a representing measure for the Stielties moment sequence
$\{ \|S_\blambda^{n+1} e_\omega\|^2 \}_{n=0}^\infty$.
On the other hand $\|S_\blambda^{n+1} e_\omega\|^2=\int_0^\infty t^n\, t\ud\mu(t)$,
hence, by the uniqueness of the representing measure \eqref{eq:op-mom-det}, $\ud\mu_0(t)=t\ud\mu(t)$.
This in particular means that $\mu_0(\{0\})=0$, and consequently $\mu_j(\{0\})=0$ for $j\in J$.
Hence, we have
\begin{align*} \int_0^\infty \frac{1}{t^{k+1}} \ud\mu_j(t) &\spacedeq[\leq]{.4em}
|\lambda_{\omega_j}|^{-2}\sum_{l\in J}|\lambda_{\omega_l}|^2\int_{[0,\infty)} \frac{1}{t^{k+1}} \ud\mu_l(t)\\
&\refoneq{eq:measure} |\lambda_{\omega_j}|^{-2} \int_{[0,\infty)} \frac{1}{t^{k+1}} \ud\mu_0(t) \\
&\spacedeq{.4em} |\lambda_{\omega_j}|^{-2} \int_{(0,\infty)} \frac{t}{t^{k+1}}\,\ud\mu(t)\\
&\spacedeq[\leq]{.4em} |\lambda_{\omega_j}|^{-2} \int_{[0,\infty)} \frac{1}{t^k} \ud\mu(t)<\infty,\quad j\in J.
\end{align*}
Using \THM{extSub-shift} completes the proof.
\end{proof}

Until now we were considering only a preliminary case of a joint subnormal extension
for a family of weighted shifts.
\LEM{rootsum-ext} allows us to provide the following theorem about extending
at most countable family of subnormal weighted shifts on directed trees
to a subnormal weighted shift on any completion of the given trees
by a finite-depth\footnote{The term \emph{depth} appears here
without a formal definition, as it is not necessary in the statement of the theorem.
Figures~\ref{fig:join-sub} and \ref{fig:two-trees} should give an intuition.} structure.

\begin{figure}[htb]\includegraphics[width=.85\textwidth]{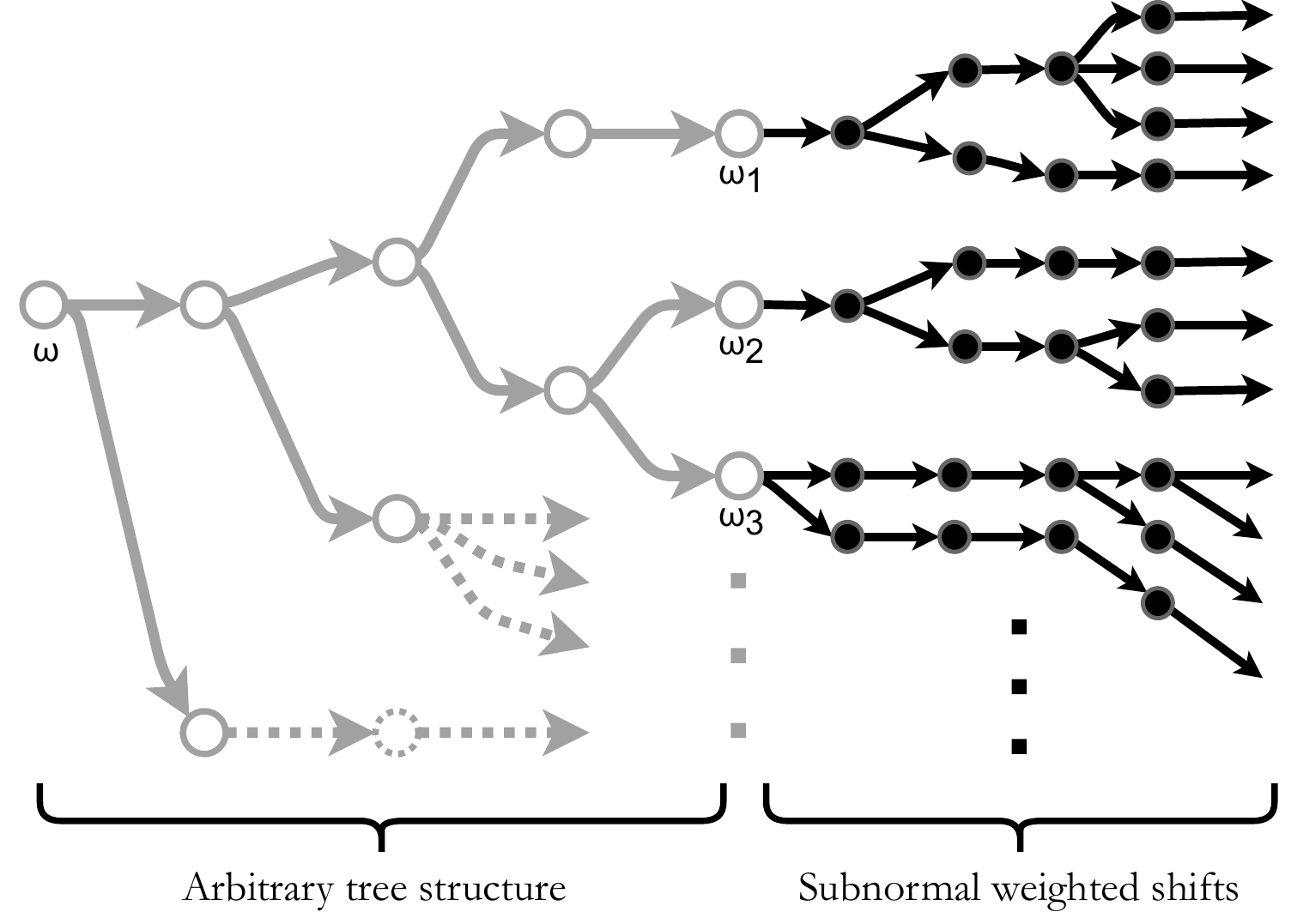}
\caption{Subtrees joined ``at the depth $k=4$''; here
$\Chif[k](\omega)=\{\omega_1,\omega_2,\omega_3,\ldots\}$ and vertices from $W$ are black
while those outside $W$ are grey.}
\label{fig:join-sub}
\end{figure}

\begin{thm}{join-sub}
Let $\Tree=(V,\parf)$ be a leafless directed tree with the root $\omega$ and $k\geq 1$.
Assume that the nonzero weights $\{\lambda_v\}_{v\in W}$ are given, where
$W:= \{v\in V\colon \parf^k(v)\neq\omega\}$.
Then \NWSR \begin{abece}
\item
there exist weights $\{\lambda_v\}_{v\in V\setminus W}$ such that
the weighted shift $S_\blambda$ with weights $\blambda=\{\lambda_v\}_{v\in V}$
is proper bounded and subnormal,
\item
the set $\Chif[k](\omega)$ is at most countable,
the weighted shift $S_{\blambda^\to_v}$ $($see \eqref{eq:destree}$)$ is proper and
admits subnormal $k$-step backward extension for every $v\in\Chif[k](\omega)$,
and $\sup\{\|S_{\blambda^\to_v}\|: v\in \Chif[k](\omega)\}<\infty$.
\end{abece}
\end{thm}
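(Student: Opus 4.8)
The plan is to prove both implications by induction on the ``level'' $j=0,1,\dots,k$, sweeping along the layers $\Chif[j](\omega)$ of the tree, with \LEM{rootsum-ext} serving as the single inductive step. The structural fact that makes this work is \eqref{eq:child-sum}: for every $u\in V$ the rooted directed tree $\Tree_{(u\to)}$ is a rooted sum of the family $\{\Tree_{(w\to)}:w\in\Chif(u)\}$, and under this identification the weighted shift $S_{\blambda^\to_u}$ becomes a weighted shift on that rooted sum whose restrictions to the summands are the $S_{\blambda^\to_w}$ and whose root weights are $\{\lambda_w\}_{w\in\Chif(u)}$ (cf.\ \eqref{eq:destree}). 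The invariant I would propagate is: \emph{for $u\in\Chif[j](\omega)$, the shift $S_{\blambda^\to_u}$ is proper and admits subnormal $j$-step backward extension}. The point is that \LEM{rootsum-ext} relates the $j$-step extendability of the parent (at level $j$) to the $(j+1)$-step extendability of its children (at level $j+1$), exactly matching the level increment, and its ``moreover'' clause propagates the operator norm unchanged.

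For (a)$\Rightarrow$(b) I would induct upward from $j=0$. The base case is that $S_\blambda=S_{\blambda^\to_\omega}$ is subnormal, i.e.\ admits subnormal $0$-step backward extension. For the inductive step at a vertex $u\in\Chif[j](\omega)$, properness of $S_\blambda$ guarantees that the root weights $\{\lambda_w\}_{w\in\Chif(u)}$ of the rooted-sum presentation are nonzero, so the inductive hypothesis is precisely condition (a) of \LEM{rootsum-ext} with its parameter taken to be $j$; the equivalent condition (b) then yields that every child shift $S_{\blambda^\to_w}$ ($w\in\Chif(u)$) admits subnormal $(j+1)$-step backward extension and that $\sup_{w\in\Chif(u)}\|S_{\blambda^\to_w}\|=\|S_{\blambda^\to_u}\|$. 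Each $S_{\blambda^\to_w}$ is a restriction of the proper $S_\blambda$, hence proper, with norm at most $\|S_\blambda\|$, so the invariant passes to level $j+1$ with the uniform bound $\|S_\blambda\|$ intact. Reaching $j=k$ gives the extendability and uniform-boundedness assertions of (b); the countability of $\Chif[k](\omega)$ follows because a proper bounded (hence densely defined) shift forces $\Tree$ to be locally countable (Proposition~\ref{pro:shift-basic}~(\ref{shbs:countable})), and a locally countable single tree is at most countable (\LEM{tree}~(\ref{tbs:countable})).

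For (b)$\Rightarrow$(a) I would induct downward from $j=k$, the base case being hypothesis (b) at level $k$. At a vertex $u\in\Chif[j](\omega)$ the children form a nonempty set (leaflessness forbids $u$ from being a leaf) lying in $\Chif[j+1](\omega)$, which is at most countable: iterating \LEM{tree-basics}~\eqref{eq:dzieci} gives $\Chif[k](\omega)=\bigsqcup_{w\in\Chif[j+1](\omega)}\Chif[k-j-1](w)$, a disjoint union of sets that are nonempty by leaflessness, whence $\Chif[j+1](\omega)$ is at most countable whenever $\Chif[k](\omega)$ is. Together with the inductive hypothesis (each child admits subnormal $(j+1)$-step backward extension, with uniformly bounded norms) this furnishes condition (b) of \LEM{rootsum-ext} for the family $\{S_{\blambda^\to_w}:w\in\Chif(u)\}$ with parameter $j$. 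Its condition (a) then supplies \emph{nonzero} weights on the edges from $u$ to its children --- exactly the sought weights in $V\setminus W$ --- making $S_{\blambda^\to_u}$ admit subnormal $j$-step backward extension, with $\|S_{\blambda^\to_u}\|=\sup_w\|S_{\blambda^\to_w}\|$ preserving the uniform bound. Descending to $j=0$ produces weights on all of $V\setminus W$ (together with $\lambda_\omega=0$) for which $S_\blambda=S_{\blambda^\to_\omega}$ is bounded, admits subnormal $0$-step backward extension --- i.e.\ is subnormal --- and is proper, since every filled-in weight is one of the nonzero weights produced by \LEM{rootsum-ext} and the weights on $W$ were nonzero by assumption.

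The conceptual heart, and the step I would single out as the main obstacle, is recognizing the self-similar rooted-sum decomposition \eqref{eq:child-sum} and matching the backward-extension depth to the tree level so that \LEM{rootsum-ext} applies verbatim at each layer; once that is in place the argument is essentially bookkeeping. The genuinely delicate pieces of that bookkeeping are: orienting the index shift ($j$-step at the parent $\leftrightarrow$ $(j+1)$-step at the children) correctly in each direction; verifying that at-most-countability of $\Chif[k](\omega)$ propagates to every intermediate layer, which is where leaflessness (guaranteeing nonempty descendant sets) is used; and checking that properness, uniform boundedness, and the norm identity of the ``moreover'' clause of \LEM{rootsum-ext} all survive the full chain of $k$ applications. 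I would not expect the measure-theoretic subtleties to resurface here, as they are entirely absorbed into \LEM{rootsum-ext}.
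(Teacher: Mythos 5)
Your argument is correct and is essentially the paper's proof: both hinge on the rooted-sum decomposition \eqref{eq:child-sum} and use \LEM{rootsum-ext} as the single inductive engine, with the depth parameter of the backward extension tracking the level in the tree. The only difference is packaging --- the paper inducts on the theorem's parameter $k$ (one application of \LEM{rootsum-ext} at level $k-1$, then the statement for $k-1$), whereas you unroll this into an explicit level-by-level sweep with an invariant; the content is identical.
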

It may be worth noticing that $W=\bigcup_{v\in\Chif[k](\omega)} \Deso(v)$.

\begin{proof}
By Proposition~\ref{pro:shift-basic}~(\ref{shbs:countable}) and the assumption that $\Tree$ is leafless
any of the conditions (a) and (b) implies that the entire tree (i.e.\ the set $V$)
is at most countable.

We will proceed by induction on $k$.

The case $k=1$ is covered by \LEM{rootsum-ext} with $k=0$.

Now let us consider the case $k\geq 2$ assuming that (a) and (b) are equivalent for $k-1$.

\pImp{b}{a} Assume that for each $v\in \Chif[k](\omega)$, the weighted shift $S_{\blambda^\to_v}$
is bounded by a common constant $C>0$ and admits a subnormal $k$-step backward extension.
For a fixed $u\in \Chif[k-1](\omega)$ we can apply \LEM{rootsum-ext} to the family
$\{S_{\blambda^\to_v}\}_{v\in\Chif(u)}$ in order to get nonzero weights
$\{\lambda_v\}_{v\in \Chif(u)}$ such that the weighted shift $S_{\blambda^\to_u}$
admits a subnormal $(k-1)$-step backward extension (see \eqref{eq:child-sum}).
By the same lemma, the norm of $S_{\blambda^\to_u}$ for $u\in \Chif[k-1](\omega)$
is also bounded by $C$.

Summarising, $\{ S_{\blambda^\to_u} \}_{u\in \Chif[k-1](\omega)}$ is a uniformly bounded
family of proper weighted shifts, each of which admits a subnormal $(k-1)$-step backward extension.
Using induction hypothesis completes the proof of (a).

\pImp{a}{b} Now assume that there exist weights $\{\lambda_v\}_{v\in V\setminus W}$
such that $S_\blambda$ is proper and subnormal. We want to show that each
weighted shift $S_{\blambda^\to_v}$ for $v\in\Chif[k](\omega)$ admits
subnormal $k$-step backward extension. They are uniformly bounded as
restrictions of the bounded operator $S_\blambda$.

The whole system $\blambda$ can as well be seen as an extension of the
system $\{\lambda_v\}_{v\in \tilde W}$, where
\[\tilde W:=\{v\in V\colon \parf^{k-1}(v)\neq\omega\}\supseteq W.\]
Then, by induction hypothesis we know that each weighted shift $S_{\blambda^\to_u}$
for $u\in\Chif[k-1](\omega)$ admits a subnormal $(k-1)$-step backward extension.
If $u\in\Chif[k-1](\omega)$, then $S_{\blambda^\to_u}$ is a weighted shift
on a rooted sum of the family $\{\Tree_{(v\to)}\}_{v\in\Chif(u)}$ (cf.\ \eqref{eq:child-sum}).
It follows from \LEM{rootsum-ext} that $S_{\blambda^\to_v}$ admits a subnormal
$k$-step backward extension for every $v\in\Chif(u)$ and every $u\in\Chif[k-1](\omega)$.
By \LEM{tree-basics}~(\ref{eq:dzieci}),
\[\Chif[k](\omega)=\bigcup_{u\in\Chif[k-1](\omega)}\Chif(u),\] which shows that (b) is valid.
This completes the proof.
\end{proof}

\begin{figure}[htb]
	\includegraphics[width=.9\textwidth]{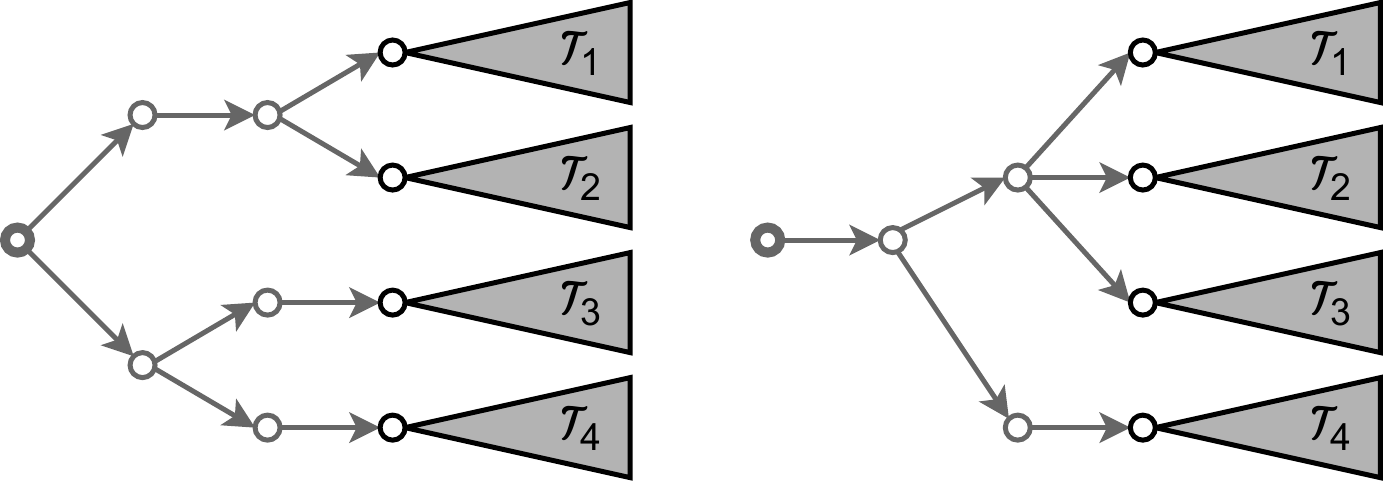}
	\caption{An example of two non-isomorphic directed trees completing a family
		$\{\Tree_1,\Tree_2,\Tree_3,\Tree_4\}$ at the depth $k=3$.
		According to \THM{join-sub}, a subnormal extension exists either in both cases
		or in neither.}
	\label{fig:two-trees}
\end{figure}

\begin{Rem}
\THM{join-sub} can be interpreted in a different way.
Namely, given a family of rooted directed trees
we can complete them into one directed tree by adding some new vertices
such that the roots of initial trees become the $k$-th children of the root of the new tree.
The value of this theorem is that the condition (b) does not depend on the structure of the
first $k$ generations of the enveloping tree. It is worth pointing out that for
$k\geq 2$ the same family of trees can be completed in a variety of ways with
enveloping directed trees being not-isomorphic
(see Figure~\ref{fig:two-trees}).

In turn, according to Proposition~\ref{pro:isometry}, if $k\geq 1$ and some countable
leafless directed tree is given without predefined weights, we can set them in such a way
that the associated weighted shift admits subnormal $k$-step backward extension,
and hence such ``blank directed tree'' does not prevent the existence of a subnormal extension.
\end{Rem}

\subsection*{Acknowledgements}
I would like to thank my supervisor, prof.\ Jan Stochel,
for all his guidance while working on this paper.


\end{document}